\numberwithin{equation}{section}
\newtheorem{thm}{Theorem}[section]
\newtheorem{lem}[thm]{Lemma}
\newtheorem{prop}[thm]{Proposition}
\newtheorem{cor}[thm]{Corollary}
\theoremstyle{definition}
\newtheorem{rem}[thm]{Remark}
\newtheorem{exam}[thm]{Example}
\newtheorem{exam-nota}[thm]{Example-Notation}
\newtheorem{nota}[thm]{Notation}
\newtheorem{dfn}[thm]{Definition}
\newtheorem{dfn-nota}[thm]{Definition-Notation}
\newtheorem{dfn-lem}[thm]{Lemma-Definition}
\newcommand{\beqa}{\begin{eqnarray*}}
\newcommand{\eeqa}{\end{eqnarray*}}
\newcommand{\id}{\mbox{${\rm id}$}}
\newcommand{\Id}{\mbox{${\rm Id}$}}
\newcommand{\Hom}{\mbox{${\rm Hom}$}}
\newcommand{\fc}{\mbox{${\mathfrak c}$}}
\newcommand{\fa}{\mbox{${\mathfrak a}$}}
\newcommand{\ft}{\mbox{${\mathfrak t}$}}
\newcommand{\fk}{\mbox{${\mathfrak k}$}}
\newcommand{\fg}{\mbox{${\mathfrak g}$}}
\newcommand{\fl}{\mbox{${\mathfrak l}$}}
\newcommand{\fs}{\mbox{${\mathfrak s}$}}
\newcommand{\fsl}{\mbox{${\fs\fl}$}}
\newcommand{\fh}{\mbox{${\mathfrak h}$}}
\newcommand{\fn}{\mbox{${\mathfrak n}$}}
\newcommand{\fp}{\mbox{${\mathfrak p}$}}
\newcommand{\fr}{\mbox{${\mathfrak r}$}}
\newcommand{\fb}{\mbox{${\mathfrak b}$}}
\newcommand{\fz}{\mbox{${\mathfrak z}$}}
\newcommand{\fm}{\mbox{${\mathfrak m}$}}
\newcommand{\eps}{\epsilon}
\newcommand{\C}{\mbox{${\mathbb C}$}}
\newcommand{\Ad}{{\rm Ad}}
\newcommand{\fgl}{\mathfrak{gl}}
\newcommand{\ad}{\operatorname{ad}}
\newcommand{\B}{\mathcal{B}}
\newcommand{\fso}{\mathfrak{so}}
\newcommand{\codim}{\mbox{codim}}
\newcommand{\N}{\mathfrak{N}}
\newcommand{\Kbundleslice}{K\times_{H\cap K} (x+\mathfrak{N}_{\fg^{-\theta}})}
\newcommand{\partialfibre}{\Phi_{n}^{-1}(\Phi_{n}(x))}
\title{The Complex Orthogonal Gelfand-Zeitlin system }
\author[M. Colarusso]{Mark Colarusso}
\address{Department of Mathematics and Statistics, University of South Alabama, Mobile, AL 36688}
\email{mcolarusso@southalabama.edu}
\author[S. Evens]{Sam Evens}
\address{Department of Mathematics, University of Notre Dame, Notre Dame, IN, 46556}
\email{sevens@nd.edu}
\subjclass[2010]{14L35, 14L30, 37J35}
\keywords{Gelfand-Zeitlin integrable systems, algebraic group actions}
\begin{document}
\maketitle

\begin{abstract}
 In this paper, we use the theory of algebraic groups to prove a number of
new and fundamental results about 
the orthogonal Gelfand-Zeitlin system.  We show that the moment map (orthogonal Kostant-Wallach map) is surjective and simplify 
criteria of Kostant and Wallach for an element to be strongly regular.  We further prove the integrability of the orthogonal 
Gelfand-Zeitlin system on regular adjoint orbits and describe the generic flows of the integrable system.  We also study the nilfibre of the moment map and show that in contrast to the general linear case it contains no strongly regular elements.  This extends results of Kostant, Wallach, and Colarusso from the
general linear case to the orthogonal case.
\end{abstract}

\section{Introduction}\label{s:intro}

Kostant and Wallach studied the Gelfand-Zeitlin system of functions for
the Lie algebra of complex $n$ by $n$ matrices in two fundamental papers
 \cite{KW1, KW2}, and found a number of new features that did not appear
in the Gelfand-Zeitlin system of functions on Hermitian matrices \cite{GS}.
The Gelfand-Zeitlin functions generate a Poisson commutative family of
functions and in \cite{KW1}, Kostant and Wallach show they generate an
integral system on each regular conjugacy class of matrices.  To show this, they 
introduced and studied the set of {\it strongly regular} matrices,
which is the set where the Gelfand-Zeitlin functions have linearly 
independent differentials.   There is also a Gelfand-Zeitlin system on
the complex orthogonal Lie algebra $\fso(n,\C)$.   Many fundamental
and incisive linear algebra constructions used to study matrices in \cite{KW1,Col1}
 do not carry over to the orthogonal case, and as a consequence, much less is
known about the complex orthogonal Gelfand-Zeitlin system.  The purpose of this paper and its sequel is to remedy this situation.  We 
establish the complete integrability of the orthogonal Gelfand-Zeitlin system on regular adjoint orbits in $\fso(n,\C)$ and extend a number of basic results 
on the strongly regular set to $\fso(n,\C)$, which were established in the general linear case in \cite{KW1, Col1, CEKorbs, CEeigen}.  In particular, we describe the generic leaves of the foliation given by the integrable system as well as aspects of the geometry of the nilfibre of the moment 
map of the system.  The key idea is to use results from the theory of algebraic group actions due to Knop, Panyushev, and Luna to extend the general 
linear results to the orthogonal setting.  As a consequence, we understand
the general linear and orthogonal complex Gelfand-Zeitlin systems using a
unified approach, and avoid some of the subtle linear algebra calculations
from previous approaches.

 %The purpose
%of this paper and its sequel is to remedy this situation.   The key
%point is to use results from the theory of algebraic group actions due to
%Knop, Luna, and Panyushev in order to extend results from the case
%of matrices to the orthogonal case, but we also get better insight
%into the original general linear case.   We are able to prove the
%complete integrability of the orthogonal Gelfand-Zeitlin system on
%regular adjoint orbits in $\fso(n,\C)$, and extend a number of basic
%results on the strongly regular set to $\fso(n,\C)$, which were established
%in the general linear case in \cite{KW1, Col1, Col2, CEeigen}. MARK, DID I SAY
%THIS CORRECTLY? 

In more detail, we let $\fg = \fg_n = \fso(n,\C)$, and fix a sequence
of embeddings $\fg_2 \subset \fg_3 \subset \dots \subset \fg_n = \fg$,
and let $G_n = SO(n,\C)$.  Using an invariant form, we embed a choice of polynomial generators of $\C[\fg_{i}]^{G_{i}}$ into $\C[\fg]$, 
the regular functions on $\fg$, and we let $J_{GZ}$ be the collection of functions consisting of the polynomial generators
of $\C[\fg_{i}]^{G_{i}}$ for $i=2,\dots, n$.  Letting $r_i$ be the rank
of $\fg_i$, we obtain a Kostant-Wallach (or KW) morphism
$\Phi: \fg \to \C^{r_2} \times \dots\times \C^{r_n}$ with coordinates given by the
above polynomial generators.  We also use extensively the partial KW morphism $\Phi_n:\fg \to \C^{r_{n-1}} \times \C^{r_n}$ defined
using the polynomial generators from $\C[\fg_{n-1}]^{G_{n-1}}$ and
$\C[\fg_n]^{G_n}$.   We note that $\Phi_n$ may be identified with
a geometric invariant theory quotient $\fg \to \fg//G_{n-1}$, and
combine this with an interlacing argument and a basic flatness
result of Knop \cite{Kn} for spherical varieties to prove our first main
theorem.

\begin{thm}\label{thm:surjintro} (see Theorem \ref{thm:surj})
The morphism $\Phi$ is surjective, and every fibre contains a regular
element of $\fg$.
\end{thm}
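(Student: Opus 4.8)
The plan is to proceed by induction on $n$, reducing the inductive step to a single interlacing statement that compares $\fg_{n-1}$ with $\fg_n$ through the partial Kostant--Wallach map $\Phi_n$. Using the invariant form, write $\fg_n=\fg_{n-1}\oplus\fg_{n-1}^{\perp}$ with projection $\pi_{n-1}\colon\fg_n\to\fg_{n-1}$, and let $\chi_i\colon\fg_i\to\fg_i//G_i=\C^{r_i}$ be the adjoint quotient, so that $\Phi=(\Phi^{(n-1)}\circ\pi_{n-1},\chi_n)$, where $\Phi^{(n-1)}$ is the Kostant--Wallach map of $\fg_{n-1}$. After checking the small cases $n=2,3$ directly, assume the theorem for $\fg_{n-1}$. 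Given $(\mathbf c',c_n)$, the inductive hypothesis provides $y\in\fg_{n-1}$, regular in $\fg_{n-1}$, with $\Phi^{(n-1)}(y)=\mathbf c'$; it then suffices to find $x$ in the affine slice $y+\fg_{n-1}^{\perp}$ with $\chi_n(x)=c_n$ and $x$ regular in $\fg_n$, for then $\pi_{n-1}(x)=y$ forces $\Phi(x)=(\mathbf c',c_n)$ while $x$ remains regular, and the induction closes, establishing surjectivity and the regular-element claim at once. So everything comes down to: \emph{for every $y$ regular in $\fg_{n-1}$ and every $c\in\C^{r_n}$, the slice $y+\fg_{n-1}^{\perp}$ meets $\chi_n^{-1}(c)$ in a point that is regular in $\fg_n$.}

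To prove this, first note that replacing $y$ by $\Ad_g y$ for $g\in G_{n-1}$ replaces the slice by $\Ad_g y+\fg_{n-1}^{\perp}$ and changes neither $\chi_n$ nor the notion of regularity in $\fg_n$, so we may take $y$ on the Kostant section of $\fg_{n-1}$, i.e. in a convenient normal form. Now bring in the structural input: as recalled above, $\Phi_n=(\chi_{n-1}\circ\pi_{n-1},\chi_n)$ is identified with the GIT quotient $\fg_n\to\fg_n//G_{n-1}$, which is isomorphic to $\C^{r_{n-1}}\times\C^{r_n}$ and hence surjective; moreover, a flatness theorem of Knop \cite{Kn} for spherical varieties — applicable here because of the spherical pair $(G_n,G_{n-1})$ — shows this quotient morphism is flat, so every fibre $\Phi_n^{-1}(a,c)$ is equidimensional of dimension $\dim\fg_n-r_{n-1}-r_n$. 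Combined with the classical fact that each adjoint fibre $\chi_{n-1}^{-1}(a)\subset\fg_{n-1}$ is irreducible with a dense open regular $G_{n-1}$-orbit, this reduces the desired statement to an interlacing computation on the slice — namely, that $v\mapsto\chi_n(y+v)$ is onto $\C^{r_n}$ and that some preimage of $c$ is regular in $\fg_n$ — the equidimensionality supplied by Knop's theorem being what would let such a computation, made generically, propagate to every fibre and every regular $y$.

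Carrying out this interlacing computation for $\fso(n,\C)$ is the main obstacle. In the general linear case it is elementary: conjugate the $(n-1)$-block into companion form and border it by an arbitrary row, column, and corner entry to realize any prescribed characteristic polynomial for the $n$-block, and check regularity. The antisymmetry intrinsic to $\fso(n,\C)$ destroys this: the slice $\fg_{n-1}^{\perp}$ contributes only a single vector of free parameters, regular elements of $\fso$ need not be cyclic as matrices, and the conditions relating the invariants of the $(n-1)$- and $n$-blocks interact with the $\pm$-symmetry of the spectrum and, in the even case, with the Pfaffian. Controlling these degeneracies uniformly — for all regular $y$ rather than only generic semisimple $y$, while keeping a regular preimage — is exactly what the detour through the spherical GIT quotient and Knop's equidimensionality is meant to accomplish, replacing the subtle $\fso$ linear algebra by an argument about dimensions of fibres.
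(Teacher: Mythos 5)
Your strategic framework matches the paper's: induction on $n$, reduction to a statement about the partial Kostant--Wallach map $\Phi_n$, and the use of conjugation by $K=G_{n-1}$ to normalize the regular element $y$ coming from the inductive hypothesis. After the conjugation observation, your desired statement --- that the slice $y+\fg_{n-1}^{\perp}$ meets $\chi_n^{-1}(c)$ in a point regular in $\fg_n$ --- is equivalent to the paper's Lemma \ref{l:bothreg}, which asserts that $\Phi_n$ restricted to the set $U_{r,\fk}=\{x\in\fg_{reg}: x_{\fk}\in\fk_{reg}\}$ is surjective.

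However, there is a genuine gap: you never actually prove this key lemma, and your sketch of how it would be proved misidentifies the mechanism. You say Knop's equidimensionality ``would let such a computation, made generically, propagate to every fibre and every regular $y$,'' but equidimensionality alone does not convert a generic statement into a universal one, and in fact the paper's argument does \emph{not} propagate from generic to all fibres. Instead it runs the opposite way: flatness of $\Phi_n$ (Proposition \ref{prop:flat}) gives openness of $\Phi_n$, hence $\Phi_n(U_{r,\fk})$ is Zariski open; since the generating functions are homogeneous, the complement of $\Phi_n(U_{r,\fk})$ is a closed $\C^{\times}$-stable set, so nonemptiness of that complement would force $\Phi_n^{-1}(0)\cap U_{r,\fk}=\emptyset$. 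The argument then reduces to the \emph{most degenerate} fibre, the nilfibre, and is closed off by an explicit root-theoretic construction: using Kostant's criterion for regular nilpotents, the paper exhibits, separately in types $B$ and $D$, a regular nilpotent $e\in\fn_{+}$ whose projection $e_{\fk}$ is regular nilpotent in $\fk$ (this is where the identification of $\fk$ as $\fg^{\theta}$ and the root types from Example \ref{ex:roottypes} enter). This explicit construction is the concrete content that replaces the interlacing linear algebra you correctly identify as the obstacle, and it is entirely absent from your proposal. Without the openness-plus-scaling reduction to the nilfibre and the explicit nilpotent element, the argument does not close.
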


An element $x \in \fg$ is called {\it strongly regular} if the set
$\{ df(x) : f \in J_{GZ} \}$ is linearly independent.   In the
case of $\fgl(n,\C)$, Kostant and Wallach consider the sequence of subalgebras 
$\fg_1\subset \fg_2 \subset \dots \subset \fg_n = \fgl(n,\C)$ by letting
$\fg_i \cong \fgl(i,\C)$ be the upper left $i$ by $i$ corner, and given $x\in \fgl(n,\C)$, define
$x_i \in \fg_i$ by orthogonal projection. They define
Gelfand-Zeitlin functions and the strongly regular set as above, and 
prove that $x\in \fgl(n,\C)$ is strongly regular if and only
if (i) $x_i$ is regular in $\fg_i$ for each $i$ and (ii) the centralizers
 $\fz_{\fg_i}(x_i) \cap \fz_{\fg_{i+1}}(x_{i+1}) = 0$ for $i=1, \dots, n-1$.
We use results on spherical varieties due to Panyushev  \cite{Pancoiso}
to prove that we may omit condition (i), and extend this result to the
case of $\fso(n,\C)$ (Proposition \ref{prop:fullsreg}).   Here, given $x \in \fg = \fso(n,\C)$, we also define $x_i \in \fg_i \cong \fso(i,\C)$ by orthogonal projection.  
Using this simplified criterion for strong regularity, we can construct a large collection of strongly regular elements.  In particular, we consider the
set $\fg_{\Theta}$ in $\fso(n,\C)$ given by the property that the 
(suitably defined) spectra of $x_i$ and $x_{i+1}$ do not intersect for
$i=2, \dots, n-1$ (see Notation \ref{nota:spectrum} and Equation \ref{eq:fgtheta}).   In Proposition \ref{prop:orthofgtheta}, we prove that elements
of $\fg_{\Theta}$ are strongly regular.   We then use this result, together with 
Theorem \ref{thm:surjintro}, to prove one of our main results.

\begin{thm}\label{thm:intsystemintro} (see Theorem \ref{thm:intsystem})
The restriction of the Gelfand-Zeitlin functions $J_{GZ}$ to a regular adjoint orbit in $\fg$ forms a completely integrable system on the orbit. 
\end{thm}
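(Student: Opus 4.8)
The plan is to verify the two standard ingredients of complete integrability on a regular adjoint orbit $\mathcal{O} \subset \fg$: first, that the Gelfand-Zeitlin functions $J_{GZ}$ restrict to a Poisson-commutative family on $\mathcal{O}$ (where $\mathcal{O}$ carries its Kirillov-Kostant-Souriau symplectic form), and second, that on a dense open subset of $\mathcal{O}$ the number of functionally independent restrictions equals half the dimension of $\mathcal{O}$. The Poisson-commutativity is the soft part: the functions in $\C[\fg_i]^{G_i}$, pulled into $\C[\fg]$ via the invariant form, Poisson-commute on all of $\fg$ by the classical Gelfand-Zeitlin argument (each $\C[\fg_i]^{G_i}$ is Poisson-central for the $\fg_i$-bracket, and nested centralizer/normalizer computations give vanishing of the remaining brackets), and Poisson-commutativity is inherited by the symplectic leaves, in particular by $\mathcal{O}$.

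The substantive point is the count. On a regular orbit $\mathcal{O}$, $\dim \mathcal{O} = \dim \fg - r_n$, so $\tfrac{1}{2}\dim\mathcal{O} = \tfrac{1}{2}(\dim\fg - r_n)$, and one checks that the number of Gelfand-Zeitlin functions that are nonconstant on $\mathcal{O}$ — discarding the $r_n$ Casimirs of $\fg_n$ itself, which are constant on $\mathcal{O}$ — is exactly $r_2 + r_3 + \dots + r_{n-1} = \tfrac12(\dim\fg - r_n)$ for $\fso(n,\C)$ (this is the same dimension bookkeeping that makes the general linear GZ system integrable, and it should be recorded as a short lemma). So it suffices to exhibit a single strongly regular point $x$ lying on $\mathcal{O}$: at such a point the full set $\{df(x) : f \in J_{GZ}\}$ is linearly independent, hence the differentials of the non-Casimir GZ functions are independent in $T_x^*\mathcal{O}$, and independence at one point of the irreducible variety $\mathcal{O}$ propagates to a dense open subset.

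To produce such a point I would combine Theorem \ref{thm:surjintro} with the strong regularity of $\fg_\Theta$ (Proposition \ref{prop:orthofgtheta}). By Theorem \ref{thm:surjintro}, the fibre $\Phi^{-1}(\Phi(x_0))$ over any $x_0 \in \mathcal{O}$ contains a regular element; since $\Phi$ refines the adjoint quotient $\fg \to \fg//G_n$, every element of that fibre has the same $G_n$-invariants as $x_0$, and a regular element with those invariants lies on the \emph{regular} orbit $\mathcal{O}$ itself. Thus $\mathcal{O}$ meets $\Phi^{-1}(\Phi(x_0))$ in a regular element. The remaining task — and the main obstacle — is to move within $\mathcal{O}$ to a point whose nested projections $x_2, \dots, x_{n-1}$ have pairwise non-intersecting spectra, i.e.\ a point of $\fg_\Theta \cap \mathcal{O}$: one wants to show $\fg_\Theta \cap \mathcal{O}$ is nonempty (indeed dense in $\mathcal{O}$). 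Here I would argue that the "spectra intersect" locus is closed and proper in $\fg$, cut out by the vanishing of appropriate resultants in the entries of the $\Phi_i$, and that its intersection with the irreducible $\mathcal{O}$ is a proper closed subset provided one can exhibit even a single point of $\mathcal{O}$ off it; such a point is furnished by choosing, along $\mathcal{O}$, a representative whose successive corners have been arranged (using the surjectivity/interlacing input and the freedom in the lower-rank invariants) to have distinct spectra. Once $\fg_\Theta \cap \mathcal{O} \neq \emptyset$, Proposition \ref{prop:orthofgtheta} gives a strongly regular point on $\mathcal{O}$, completing the argument.
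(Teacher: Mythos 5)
Your proposal is correct and uses the same essential mechanism as the paper's proof: reduce to showing the orbit meets $\fg_{sreg}$ (Proposition \ref{prop:sregintegrable}), then produce such a point by combining surjectivity of $\Phi$ (Theorem \ref{thm:surj}) with the strong regularity of $\fg_\Theta$ (Proposition \ref{prop:orthofgtheta}). Two remarks on streamlining. First, your initial application of Theorem \ref{thm:surjintro} to find a regular element in $\Phi^{-1}(\Phi(x_0)) \cap \mathcal{O}$ is a dead end: $x_0$ itself is such an element, and this step advances nothing. Second, the density/resultant argument is unnecessary padding, since one only needs a single point of $\fg_\Theta \cap \mathcal{O}$, not a dense set. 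The cleanest logical order, and the one the paper uses, is the reverse of what your phrase ``choosing, along $\mathcal{O}$, a representative'' suggests: one does not first restrict to $\mathcal{O}$ and then try to adjust the lower corners (which would require knowing the image of $\mathcal{O}$ under the partial projections). Instead, use surjectivity of $\Phi$ to find $y \in \fg$ with $\Phi(y)$ having top component $\chi(x_0)$ and lower components chosen so the spectra of the $y_i$ are pairwise disjoint, so $y \in \fg_\Theta$; then $y$ is strongly regular by Proposition \ref{prop:orthofgtheta}, hence regular by Equation (\ref{eq:totreg}), and therefore $y \in \chi^{-1}(\chi(x_0)) \cap \fg_{reg} = \Ad(G)\cdot x_0$. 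Membership in $\mathcal{O}$ is a consequence, not a constraint imposed in advance.
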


We further show that for $x\in \fg_{\Theta}$,
the fibre $\Phi^{-1}(\Phi(x))$ has a free action by an abelian
linear algebraic group, and thereby extend a result of Kostant and Wallach
and the first author \cite{KW1, Col1} from the general linear case to the
orthogonal case (Theorem \ref{thm:GZgroup}).  In the final section, we study the nilfibre $\Phi^{-1}(0)$.  
We begin by studying the partial nilfibre $\Phi_{n}^{-1}(0)$.  We show that its irreducible components can be 
described in terms of closed $G_{n-1}=SO(n-1,\C)$-orbits on the flag variety $\B$ of $\fg$ (Theorem \ref{thm:partialnil}).  
We accomplish this by using the Luna slice theorem to describe the generic fibres of $\Phi_{n}$ and then 
degenerate a generic fibre to $\Phi_{n}^{-1}(0)$ using Knop's flatness result.  Using another interlacing argument and well-known facts about the closed $G_{n-1}$-orbits on $\B$, we prove: 

\begin{prop}\label{p:intronil} (see Proposition \ref{p:nosreg})
The nilfibre of $\Phi$ contains no strongly regular elements.
\end{prop}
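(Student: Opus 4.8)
The plan is to show that no point of $\Phi^{-1}(0)$ can satisfy the simplified strong regularity criterion of Proposition \ref{prop:fullsreg}, by exploiting the nilpotency forced on all the truncations $x_i$. Suppose $x \in \Phi^{-1}(0)$. Since $\Phi(x) = 0$, every invariant $f \in \C[\fg_i]^{G_i}$ with positive degree vanishes at $x_i$, so each $x_i \in \fg_i = \fso(i,\C)$ is nilpotent; in particular $x = x_n$ is nilpotent. If $x$ were strongly regular, then by Proposition \ref{prop:fullsreg} each $x_i$ would be regular in $\fg_i$ and the centralizer intersections $\fz_{\fg_i}(x_i) \cap \fz_{\fg_{i+1}}(x_{i+1})$ would vanish for $i = 2, \dots, n-1$. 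So it suffices to produce, for every nilpotent $x$ with all $x_i$ regular nilpotent in $\fso(i,\C)$, a nonzero element of some intersection $\fz_{\fg_{n-1}}(x_{n-1}) \cap \fz_{\fg_n}(x_n)$.

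The key step is a dimension/interlacing count of the sort already used in the paper to prove Theorem \ref{thm:surjintro} and Proposition \ref{p:intronil}. For $x$ regular nilpotent in $\fso(n,\C)$ we have $\dim \fz_{\fg_n}(x) = r_n$, and similarly $\dim \fz_{\fg_{n-1}}(x_{n-1}) = r_{n-1}$. Now $\fz_{\fg_{n-1}}(x_{n-1})$ and $\fz_{\fg_n}(x_n)$ are both subspaces of $\fg_n$, so a transversality failure will follow from the numerics
\[
\dim \fz_{\fg_{n-1}}(x_{n-1}) + \dim \fz_{\fg_n}(x_n) \;=\; r_{n-1} + r_n \;>\; \dim\!\big(\fz_{\fg_{n-1}}(x_{n-1}) + \fz_{\fg_n}(x_n)\big),
\]
provided the right-hand side is bounded above by $r_{n-1}+r_n$. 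Here is where the structure of $\fso$ helps: $\fz_{\fg_n}(x_n)$ for $x_n$ regular nilpotent is an abelian subalgebra contained in a single Borel, and its image in $\fg_{n-1}$ under the (orthogonal) projection $\fg_n \to \fg_{n-1}$ has dimension strictly less than $r_n$ because the "top row/column" directions in the matrix realization are killed; combining this with the known structure of the regular nilpotent centralizer in $\fso(n-1,\C)$ forces a nonzero intersection. Concretely, I would work in the standard matrix model of $\fso(n,\C)$ with respect to the antidiagonal form, write down $\fz_{\fg_n}(x_n)$ and $\fz_{\fg_{n-1}}(x_{n-1})$ explicitly (both are spanned by powers of the principal nilpotent, hence by explicit band matrices), and observe that a suitable power of $x_n$ already lies in $\fg_{n-1}$ and centralizes $x_{n-1}$.

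The main obstacle is handling the parity issue in $\fso(n,\C)$: the rank jumps $r_n - r_{n-1}$ alternate between $0$ and $1$ as $n$ varies, and the regular nilpotent orbit in the even orthogonal algebra is more delicate (the principal nilpotent is regular but the "very even" phenomena affect the component structure of the orbit). I would therefore split into the cases $n$ even and $n$ odd, and in each case use the explicit description of the closed $G_{n-1}$-orbits on the flag variety $\B$ of $\fg$ --- already invoked in Theorem \ref{thm:partialnil} and the discussion preceding Proposition \ref{p:intronil} --- to pin down exactly which regular nilpotent $x_{n-1}$ can arise as a truncation of a regular nilpotent $x_n$. Once the pair $(x_{n-1}, x_n)$ is normalized via the $G_{n-1}$-action, the centralizer computation is a finite check. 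An alternative, cleaner route that avoids case analysis would be to argue abstractly: the map $x \mapsto x_{n-1}$ restricted to the regular nilpotent locus, composed with $\Phi_{n-1}$, together with the interlacing inequality for eigenvalues/invariants that is used elsewhere in the paper, shows the "independent" count $r_2 + \cdots + r_n$ of differentials cannot be achieved at a nilpotent point; I would try this first and fall back on the explicit matrix computation only if the abstract count leaves a boundary case open.
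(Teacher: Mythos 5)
Your overall reduction is the same as the paper's: by Proposition \ref{prop:fullsreg}, a strongly regular $x$ in the nilfibre would give a pair $(x_{\fk},x)$ of regular nilpotent elements in $(\fk,\fg)=(\fso(n-1),\fso(n))$ with $\fz_{\fk}(x_{\fk})\cap\fz_{\fg}(x)=0$, and the goal is to exclude this (for $n>3$). That part is correct and is exactly the role of Corollary \ref{c:nonsreg} in the paper. What is missing is the actual production of a nonzero element of the intersection, and your proposed routes to it have concrete gaps.

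The dimension count is circular: the inequality
\[
\dim \fz_{\fk}(x_{\fk})+\dim\fz_{\fg}(x)=r_{n-1}+r_n>\dim\bigl(\fz_{\fk}(x_{\fk})+\fz_{\fg}(x)\bigr)
\]
\emph{is} the statement that the intersection is nonzero, so "provided the right-hand side is bounded above by $r_{n-1}+r_n$" supplies nothing --- that bound always holds with equality exactly when the intersection vanishes. You would need an $\fso$-specific input forcing the inequality to be strict, and that input is precisely what has to be found. Your concrete suggestion --- "a suitable power of $x_n$ already lies in $\fg_{n-1}$ and centralizes $x_{n-1}$" --- does not work: for $x_n$ regular nilpotent in $\fso(2l)$ the partition is $(2l-1,1)$ and $\fz_{\fso(2l)}(x_n)$ is \emph{not} spanned by (odd) powers of $x_n$; and even when it is (type $B$), a power of $x_n$ centralizes $x_n$ but there is no reason it should lie in $\fg_{n-1}$ or commute with $x_{n-1}=\pi_{n-1}(x_n)\neq x_n$.

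The route you flag as a fallback --- using closed $K$-orbits on $\B$ --- is in fact the paper's main line, but it requires a structural theorem you do not have: Theorem \ref{thm:partialnil} (proved via the Luna slice theorem and Knop's flatness, by degenerating a generic fibre of $\Phi_n$) shows $\Phi_n^{-1}(0)=\Ad(K)\cdot\fn_+$ (type $D$) or $\Ad(K)\cdot\fn_+\cup\Ad(K)\cdot\fn_-$ (type $B$), where $\fn_\pm$ are nilradicals of $\theta$-stable Borels lying in closed $K$-orbits. Only after this reduction can one pin down the pair $(x_{\fk},x)$, and the paper then finds the nonzero centralizer element not by matrix bookkeeping but by a root-theoretic observation (Proposition \ref{prop:overlaps}): the highest root $\phi$ of such a Borel is compact imaginary for $n>4$, so the root space $\fg_\phi\subset\fk$ lies in $\fz_{\fk}(\fn\cap\fk)\cap\fz_{\fg}(\fn)$; the case $\fso(4)$ needs a separate (abelian-nilradical, complex $\theta$-stable) argument. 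Without Theorem \ref{thm:partialnil} you have no a priori normalization of $(x_{\fk},x)$, and without the compact imaginary highest root observation the "finite check" you defer to is not finite in any obvious sense. So the proposal names the right landmarks but leaves the two essential steps --- the structure of $\Phi_n^{-1}(0)$ and the explicit nonvanishing of the centralizer intersection --- unproved, and the one explicit mechanism you offer (powers of $x_n$) is incorrect.
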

This stands in contrast to the 
case of $\fgl(n,\C)$ studied extensively in \cite{CEKorbs}.  As a consequence of Proposition \ref{p:intronil}, we prove that there is no analogue of the Hessenberg
matrices, which play a fundamental role in \cite{KW1} (Corollary \ref{c:noHess}).   In the sequel,
we plan to further develop these methods to completely understand the
strongly regular set, and describe in full detail each partial KW fiber
$\Phi_n^{-1}(\Phi_n(x))$.

This paper is organized as follows.  In Section \ref{s:realization},
we introduce notation and results for later use.  In 
Section \ref{s:surjectivity}, we identify the partial KW map
$\Phi_n$ as an invariant theory quotient and show that $\Phi$ is
surjective.   In Section \ref{s:spherical}, we study the strongly regular
set, and prove that $\fg_{\Theta}$ consists of strongly
regular elements.   We further prove complete integrability of regular
orbits, and study the KW fibers for elements in $\fg_{\Theta}$.  In Section \ref{s:KWnilfibre}, we study the nilfibres of $\Phi_{n}$ and $\Phi$ 
and prove that the nilfibre of $\Phi$ contains no strongly regular elements. 
In the body of the paper, all Lie algebras are complex, as are all algebraic
groups.   In particular, we will write $\fso(n)$ and $SO(n)$ to denote
 $\fso(n,\C)$ and $SO(n,\C)$, and similarly with $\fgl(n)$.

The first author was supported in part by
 NSA grant number H98230-16-1-0002 and the second author was supported in part by Simons Foundation Travel Grant 359424. 
 
We would like to thank Nolan Wallach and Jeb Willenbring for useful discussions relevant to the subject of this paper. 

\section{Preliminaries}\label{s:realization}

In this section, we recall basic facts about the orthogonal Lie algebras 
 (Section \ref{ss:orthoreal}).   After introducing some notation, we review basic properties of the orthogonal 
Gelfand-Zeitlin systems (hereafter referred to as GZ systems) and its moment map (orthogonal Kostant-Wallach map) and summarize known results (Section \ref{ss:GZsystems}).  We begin by 
describing the realization of $\fso(n)$ that we will use throughout the paper.  

\subsection{Realization of Orthogonal Lie algebras}\label{ss:orthoreal}  
  We give explicit descriptions of standard Cartan subalgebras
and corresponding root systems of $\fso(n)$.  Our exposition follows Chapters 1 and 2 of \cite{GW}.  

Let $\beta$ be the non-degenerate, symmetric 
bilinear form on $\C^{n}$ given by 
\begin{equation}\label{eq:beta}
\beta(x,y)=x^{T} S_{n} y, 
\end{equation}
where $x, y$ are $n\times 1$ column vectors and $S_{n}$ is the $n\times n$ 
matrix:
\begin{equation}\label{eq:Sn}
S_{n}=\left[\begin{array}{ccccc}
0& \dots &\dots & 0 & 1\\
\vdots &  & & 1 & 0\\
\vdots &  &\iddots  & & \vdots\\
0& 1& \dots & 0 & \vdots\\
1 & 0& \dots & \dots & 0\end{array}\right]
\end{equation}
with ones down the skew diagonal and zeroes elsewhere.  
The special orthogonal group,
$SO(n)$, consists of the $g\in SL(n)$ such that $\beta(gx, gy)=\beta(x,y)$ for all $x,\, y \in \C^{n}$.
Its Lie algebra,
$\fso(n)$, consists of the $Z\in\fsl(n)$ such that $\beta(Zx, y)=-\beta(x,Zy)$ for
all $x, \, y\in\C^{n}$.  
We consider the cases where $n$ is odd and even separately.  Throughout, we denote the standard basis of $\C^{n}$ by $\{e_{1},\dots, e_{n}\}$.

\subsubsection{Realization of $\fso(2l)$} \label{ss:soevenreal}
Let $\fg=\fso(2l)$ be of type $D$.  The subalgebra of diagonal matrices $\fh:=\{\mbox{diag}[a_{1},\dots, a_{l}, -a_{l},\dots, -a_{1}],\, a_{i}\in\C\}$ is a Cartan subalgebra of $\fg$.  We refer to $\fh$ as the \emph{standard Cartan subalgebra}.  
Let $\epsilon_{i}\in\fh^{*}$ be the linear functional $\epsilon_{i}(\mbox{diag}[a_{1},\dots, a_{l}, -a_{l},\dots, -a_{1}])=a_{i}$, and 
let $\Phi(\fg, \fh)$ be the roots of $\fg$ with respect to $\fh$.  Then 
$\Phi(\fg,\fh)=\{\epsilon_{i}-\epsilon_{j},\, \pm(\epsilon_{i}+\epsilon_{j}):\; 1\leq i\neq j\leq l\}$, and
we take as our \emph{standard positive roots} the set
$\Phi^{+}(\fg,\fh):=\{\epsilon_{i}-\epsilon_{j},\, \epsilon_{i}+\epsilon_{j}:\; 1\leq i< j\leq l\}$
with corresponding simple roots $\Pi:=\{\alpha_{1},\dots, \alpha_{l-1}, \alpha_{l}\}$
where  $\alpha_{i}=\epsilon_{i}-\epsilon_{i+1}$ for $i=1, \dots, l-1$, and $\alpha_{l}=\epsilon_{l-1}+\epsilon_{l}$.  The Borel subalgebra $\fb_{+}:=\fh\oplus\displaystyle\bigoplus_{\alpha\in\Phi^{+}(\fg,\fh)} \fg_{\alpha}$ is easily seen to be the set of upper triangular matrices in $\fg$. 

For the purposes of computations with $\fso(2l)$, it is convenient to 
relabel part of the standard basis of $\C^{2l}$ as $e_{-j}:=e_{2l+1-j}$ for $j=1, \dots, l$.  
%Then the basis $\{e_{\pm 1}, \dots, e_{\pm l}\}$ is an isotropic basis of $\C^{2l}$ with respect to the form $\beta$ in (\ref{eq:beta}).

\subsubsection{Realization of $\fso(2l+1)$} \label{ss:sooddreal}
Let $\fg=\fso(2l+1)$ be of type $B$.  The subalgebra of diagonal matrices $\fh:=\{\mbox{diag}[a_{1},\dots, a_{l}, 0, -a_{l},\dots, -a_{1}],\, a_{i}\in\C\}$ is a Cartan subalgebra of $\fg$.  We again refer to $\fh$ as the \emph{standard Cartan subalgebra}.  
Let $\epsilon_{i}\in\fh^{*}$ be the linear functional $\epsilon_{i}(\mbox{diag}[a_{1},\dots, a_{l},0,  -a_{l},\dots, -a_{1}])=a_{i}$.  In this case, the roots are
$\Phi(\fg,\fh)=\{\epsilon_{i}-\epsilon_{j},\, \pm(\epsilon_{i}+\epsilon_{j}):\; 1\leq i\neq j\leq l\} \cup \{\pm \epsilon_{k} :\, 1\leq k\leq l\}$.
We take as our \emph{standard positive roots} the set
$\Phi^{+}(\fg,\fh):=\{\epsilon_{i}-\epsilon_{j},\, \epsilon_{i}+\epsilon_{j}:\; 1\leq i< j\leq l\}\cup \{ \epsilon_{k} :\, 1\leq k\leq l\}$ 
with corresponding simple roots 
$\Pi:=\{\alpha_{1},\dots, \alpha_{l-1}, \alpha_{l}\}$ where $\alpha_{i}=\epsilon_{i}-\epsilon_{i+1}, \, i=1, \dots, l-1, \, \alpha_{l}=\epsilon_{l}$.
The Borel subalgebra $\fb_{+}:=\fh\oplus\displaystyle\bigoplus_{\alpha\in\Phi^{+}(\fg,\fh)} \fg_{\alpha}$
is easily seen to be the set of upper triangular matrices in $\fg$.
 
  We relabel part of the standard basis of $\C^{2l+1}$ by
letting $e_{-j}:=e_{2l+2-j}$ for $j=1, \dots, l$ and $e_{0}:=e_{l+1}$.  
%Then the basis $\{e_{\pm 1}, \dots, e_{\pm l}, e_{0}\}$ is an isotropic basis of $\C^{2l+1}$ with respect to the form $\beta$ in (\ref{eq:beta}).  

%We omit the description of the standard root system for $\fgl(n,\C)$, since we will not be performing 
%matrix computations with $\fgl(n,\C)$.  

\subsection{Split Rank $1$ symmetric subalgebras}\label{ss:symmetricreal}

For later use, 
recall the realization of $\fso(n-1)$ as a symmetric 
subalgebra of $\fso(n)$.
For $\fg=\fso(2l+1)$, let $t$ be an element of the Cartan subgroup with
Lie algebra $\fh$  with the property that 
$\Ad(t)|_{\fg_{\alpha_i}}=\id$ for $i=1, \dots, l-1$ and $\Ad(t)|_{\fg_{\alpha_l}}=-\id$.  Consider the involution $\theta_{2l+1}:=\Ad(t)$.   Then $\fk=\fso(2l)=\fg^{\theta_{2l+1}}$ 
(see \cite{Knapp02}, p. 700).  Note that $\fh \subset \fk$.
%It follows that $\fg^{\theta}=\{ A=(a_{ij}) \in \fso(2l+1, \C) : a_{lj}=a{kl}=0, \forall 1 \le j, k \le 2l+1 \}.$
In the case $\fg=\fso(2l)$, $\fk=\fso(2l-1)=\fg^{\theta_{2l}}$,
where $\theta_{2l}$ is the involution induced by the diagram automorphism interchanging
the simple roots $\alpha_{l-1}$ and $\alpha_l$ relative to a fixed choice
of simple root vectors (see \cite{Knapp02}, p. 703).   Note that
in this case, $\theta_{2l}(\eps_l)=-\eps_l$ and $\theta_{2l}(\eps_i)=\eps_i$ for $i=1, \dots, l-1$.  
We will omit the subscripts $2l+1$ and $2l$ from $\theta$
when $\fg$ is understood.  % Note that in either case, $\fh \cap \fk$ is
%a Cartan subalgebra of $\fk$.
%I DON'T THINK WE NEED THIS STATEMENT ABOUT THE CARTAN CONSIDERING NEW MATERIAL IN SECTION 5.

We also denote the corresponding involution of $G=SO(n)$ by $\theta$.  
The fixed subgroup $G^{\theta}=S(O(n-1)\times O(1))$ is disconnected.  
We let $K:=(G^{\theta})^{0}$ be the identity component of $G^{\theta}$.  Then $K=SO(n-1)$, and $\mbox{Lie}(K)=\fk=\fg^{\theta}$.

In both cases, $\theta$ preserves the standard Cartan subalgebra $\fh$,
and hence acts on the roots $\Phi(\fg,\fh)$.
A root $\alpha$ is called {\it real}
if $\theta(\alpha)=-\alpha$, {\it imaginary} if $\theta(\alpha)=\alpha$, 
and {\it complex } if $\theta(\alpha)\not= \pm \alpha.$   If $\alpha$ is
imaginary, then $\alpha$ is called {\it compact} if $\theta|_{\fg_\alpha}=\id$
and {\it noncompact} if $\theta|_{\fg_\alpha}=-\id.$    If $\alpha$ is complex, then $\alpha$ is called \emph{complex} $\theta$-{\it stable} if $\theta(\alpha)$
is positive, and otherwise is called \emph{complex} $\theta$-{\it unstable}.  

\begin{exam}\label{ex:roottypes}
Let $\fg=\fso(2l+1)$ and $\fk=\fso(2l)$, and let $\theta=\Ad(t)$ be 
as above.  The roots $\{\pm(\epsilon_{i}-\epsilon_{j}),\, \pm(\epsilon_{i}+\epsilon_{j}), \;1\leq i < j\leq l\}$ are compact imaginary, and the roots $\{ \pm \epsilon_{i}\; i=1,\dots, l\}$ are non-compact imaginary.  As noted above $\fh\subset\fk$, so that $\fh$ is a Cartan subalgebra of $\fk$, and it is easy to see that the set of roots $\{\alpha_{1},\,\alpha_{2},\,\dots,\, \alpha_{l-1},\, \eps_{l-1}+\eps_{l}\}$ may be identified with the standard set of simple roots $\Pi$ of $\fso(2l)$ given in Section \ref{ss:soevenreal}.  

Now let $\fg=\fso(2l)$ and $\fk=\fso(2l-1)$ and $\theta=\theta_{2l}$ be as above.  
Then the simple roots $\alpha_{l-1}=\epsilon_{l-1}-\epsilon_{l}$ and $\alpha_{l}=\epsilon_{l-1}+\epsilon_{l}$ 
are complex $\theta$-stable with $\theta(\alpha_{l-1})=\alpha_{l}$.  Note that we can choose $\theta$ to be the involution which acts on the basis of $\C^{2l}$ as $\theta(e_{l})=e_{-l}$ and $\theta(e_{\pm i})=e_{\pm i}$ for $i\neq l$.  
Therefore, the roots $\{\pm(\epsilon_{i}+\epsilon_{j}),\,\pm (\epsilon_{i}-\epsilon_{j}),\, 1\leq i<j\leq l-1\}$ are compact imaginary, whereas the roots
$\{\pm(\epsilon_{i}+\epsilon_{l}), \pm(\epsilon_{i}-\epsilon_{l}),\, 1\leq i\leq l-1\}$ are complex $\theta$-stable with $\theta(\epsilon_{i}\pm\epsilon_{l})=\epsilon_{i}\mp\epsilon_{l}.$  
The $\theta$-stable subspace
$\fg_{\alpha}\oplus\fg_{\theta(\alpha)}$ decomposes as $\fg_{\alpha}\oplus\fg_{\theta(\alpha)}=((\fg_{\alpha}\oplus\fg_{\theta(\alpha)})\cap \fk)\oplus((\fg_{\alpha}\oplus\fg_{\theta(\alpha)})\cap\fg^{-\theta}).$

In this case, $\fh\cap\fk$ can be identified with the standard diagonal Cartan subalgebra of $\fk$.  Under this identification, the set $\{\alpha_{1},\,\dots, \, \alpha_{l-2},\, \frac{1}{2}(\alpha_{l-1}+\theta(\alpha_{l-1}))\}$ is identified with the standard simple roots $\Pi$ of $\fso(2l-1)$ given in Section \ref{ss:sooddreal}. 
%Note that for each complex $\theta$-stable root $\alpha$,
\end{exam}

%Let $\fg=\fgl(n,\C)$, and let $\fk=\fgl(n-1,\C)\oplus\fgl(1,\C)$ 
%be the subalgebra consisting of block diagonal matrices with one $(n-1)\times (n-1)$ block 
%and one $1\times 1$ block.  Then $\fk=\fg^{\theta}$, where 
%$\theta=\Ad(t),$ $t=\mbox{diag}[1,\dots, 1,-1]$.  

\subsection{Notation}\label{s:thenotation}
We lay out some of the notation that we will use throughout the paper.   Recall that all Lie algebras and algebraic groups are assumed to be complex.%Sections \ref{ss:realization} and \ref{ss:irredcomp}. 
\begin{nota}\label{nota:thenotation}
\begin{enumerate}
\item We let $\fg= \fso(n)$ unless otherwise specified.  We let 
$\fk=\fso(n-1)$ be the symmetric subalgebra given in Section \ref{ss:symmetricreal}.  
We denote the corresponding connected algebraic subgroup by $K$.  
It will also be convenient at times to denote the Lie algebras $\fso(i)$ by $\fg_{i}$, 
and the algebraic group $SO(i)$ by $G_{i}$ for $i=2,\dots, n$.  In particular,
$\fg_{n-1} = \fk$.
\item We let $r_{i}$ be the rank of $\fg_{i}$.
\item Let $\langle \langle x, \, y\rangle\rangle =\mbox{Tr}(xy)$, for $x,\, y\in\fg$ be the (nondegenerate) trace form on $\fg$.  
\item In the Cartan decomposition, $\fg=\fk + \fp$ is direct, where $\fp = \fg^{-\theta}$.
For $x\in\fg$, we let $x_{\fk}$ and $x_{\fp}$ be the $\fk$ and $\fp$ components.
We recall that $\fp = \fk^{\perp}$, where for $U \subset \fg$,
$U^{\perp}$ is the perpindicular subspace relative to the trace form.
\item  Let $G$ be an algebraic group with Lie algebra $\fg$.  Suppose $\fc\subset\fg$ is an algebraic subalgebra, and $S\subset \fg$ is a subset.  We denote by 
$\fz_{\fc}(S)$ the centralizer of $S$ in $\fc$, i.e.,
$\fz_{\fc}(S):=\{Y\in\fc:\, [Y, s]=0 \mbox{ for all } s\in S\}$.
If $C\subset  G$ is an algebraic subgroup, then we denote the centralizer of $S$ in $C$ by 
$Z_{C}(S)=\{g\in C:\, gsg^{-1}=s\mbox{ for all } s\in S$\}.  Of course $\mbox{Lie}(Z_{C}(S))=\fz_{\fc}(S)$ if $C$ is connected with
Lie algebra $\fc$. 
\item For any complex variety $X$, we denote by $\C[X]$ its ring of regular
functions.  If $X$ is a $G$-variety, we let $\C[X]^G$ be the $G$-invariant
regular functions.  We call an element $x\in X$ regular (or $G$-regular
if $G$ is ambiguous) if $x$ lies in the open dense set consisting of
$G$-orbits of maximal dimension, and let $X_{reg}$ be the regular elements.
If $G$ is a reductive algebraic group acting on its Lie algebra $\fg$ via the adjoint action, we let 
$\C[\fg]^{G}$ be the ring of adjoint invariant polynomial functions on $\fg$,
and recall that $x\in \fg$ is regular if and only if $\fz_{\fg}(x)$
has dimension equal to the rank of $\fg$.
\item We let $\B=\B_{\fr}$ denote the flag variety of a Lie algebra $\fr$,
which we identify with the Borel subalgebras of $\fr$.
\end{enumerate}
\end{nota}  

\begin{dfn}\label{dfn:std}
We shall call a Borel subalgebra $\fb\in \B$ \emph{standard} if $\fh\subset\fb$, where 
$\fh\subset\fg$ is the standard Cartan subalgebra of diagonal matrices of $\fg$.

\end{dfn}

\subsection{Numerical identities}\label{ss:numerical}

We record several numerical identities which are used in the remainder
of the paper.   Let $\fg = \fso(n)$ and $\fk = \fso(n-1)$.

\begin{equation}\label{eq:sumri}
\displaystyle\sum_{i=2}^{n-1} r_{i} = \frac{1}{2}(\dim\fg-r_{n}).
\end{equation}

\begin{equation}\label{eq:dimbigflag}
\dim(\B_{\fg}) + \dim(\B_{\fk})=\dim(\fg)-r_{n}-r_{n-1}.
\end{equation}

\begin{equation}\label{eq:quotdim}
\dim(\fg)-r_{n}-r_{n-1}=\dim \fk.
\end{equation}

These assertions are routine and are left to the reader.  We note
that they are also true for $\fgl(n)$, provided the sum in the first
equation goes from $1$ to $n-1$.

\subsection{The Gelfand-Zeitlin Integrable Systems}\label{ss:GZsystems}

The complex general linear GZ system was first introduced 
by Kostant and Wallach in \cite{KW1}, and the orthogonal GZ system was introduced by the first author in \cite{Col2}.  
We briefly recall the construction of the orthogonal GZ systems here, especially since 
we are using a different realization of the orthogonal Lie algebra than was used in 
\cite{Col2}.  To construct the GZ system, let $\fg_{i} \cong \fso(i)$ be defined by downward induction,
by taking $\fg_{n}=\fg$, and letting $\fg_{i}=\fg_{i+1}^{\theta_{i+1}}$, where
$\theta_{i+1}$ is the involution from Section \ref{ss:symmetricreal}.
Thus, 
we have a chain of Lie subalgebras 
\begin{equation}\label{eq:GZchain}
\fg_{2}\subset\fg_{3}\subset\dots\subset\fg_{i}\subset\dots\subset \fg_{n}.
\end{equation} 
The Lie algebra $\fg$ 
has nondegenerate, invariant, symmetric bilinear form $\langle\langle x, y\rangle\rangle$ given by the trace form from Notation \ref{nota:thenotation} (3).
The form is nondegenerate on each $\fg_{i}$, so
we may identify $\fg_{i}$ with its dual, and regard it as a Poisson
variety using the Lie-Poisson structure. 
For $i=2, \dots, n$, the inclusion $\fg_{i}\subset\fg$ then dualizes 
to give us a map: $\pi_{i}:\fg\to\fg_{i}$, which projects an element 
$x\in\fg$ to its projection $x_{i}$ in $\fg_{i}$
off of $\fg_{i}^{\perp}$. 
Define functions $\psi_{i,j}$ on $\fg_{i}$ for $i=2, \dots, n$ as follows.
\begin{equation}\label{eq:generators}
\begin{split}
&\mbox{ If } i \mbox{ is odd, then}\, \psi_{i,j}(y):=\mbox{Tr}(y^{2j}), \, y\in\fg_{i},\, j=1,\dots, r_{i}.\\
&\mbox{ If } i\mbox{ is even, then}\, \psi_{i,j}(y):=\mbox{Tr}(y^{2j}),\, y\in\fg_{i},\,\mbox{for } j=1,\dots, r_{i}-1,\mbox{ and } \psi_{i,r_{i}}(y):=\mbox{Pfaff}(y), \\
&\mbox{ where } \mbox{Pfaff}(y) \mbox{ denotes the Pfaffian of } y.\\
\end{split}
\end{equation}    
Then it is well-known that for each $i$, $\C[\fg_{i}]^{G_{i}}$ is a polynomial algebra with
free generators $\psi_{i,j}, j=1, \dots, r_{i}$, and $\C[\fg_{i}]^{G_{i}}$ Poisson
commutes with $\C[\fg_{i}]$. 
 Since $\fg_{i}\subset\fg$ is an inclusion of Lie algberas, 
the transpose $\pi_{i}:\fg\to\fg_{i}$ is easily seen to be a map of Poisson varieties.   For $i=2,\dots, n$ and $j=1,\dots, r_{i}$, let $f_{i,j}=\pi_{i}^{*}\psi_{i,j}$.   
We define the Gelfand-Zeitlin functions: 
\begin{equation}\label{eq:GZfuns}
J_{GZ}=\{f_{i,j}:\; i=2,\dots, n,\, j=1,\dots, r_{i}\},
\end{equation}
and let $J(\fg)$ be the subalgebra of $\C[\fg]$ generated by the
functions in $J_{GZ}$.
\begin{rem}\label{r:generallinear}
The Gelfand-Zeitlin functions $J_{GZ}$ and the associated subalgebra $J(\fg)$ were first considered for the $n\times n$ complex general linear Lie algebra $\fg=\fgl(n)$ by 
Kostant and Wallach in \cite{KW1}.  In that setting, there is also a chain of subalgebras as in (\ref{eq:GZchain}), 
where $\fg_{i}=\fgl(i)\subset\fg$ is identified with the top lefthand $i\times i$ corner of $\fg$.  See \cite{KW1} or \cite{CEexp} for more details.
\end{rem}

The following routine proposition may be proved by the same method as for
 the general linear case
in Proposition 2.5 of \cite{CEexp}. 

\begin{prop}\label{p:Poissoncommute}
The algebra $J(\fg)$ is a Poisson commutative subalgebra of $\C[\fg]$.
\end{prop}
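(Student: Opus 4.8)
The plan is to reduce the statement to the well-known fact that for a reductive Lie algebra $\fl$ acting on itself by the adjoint action, the invariant functions $\C[\fl]^{L}$ Poisson commute with all of $\C[\fl]$ (with respect to the Lie--Poisson bracket), together with the observation that each projection $\pi_i \colon \fg \to \fg_i$ is a Poisson map. Concretely, to show $J(\fg)$ is Poisson commutative it suffices to show that the generators $f_{i,j}$ and $f_{k,l}$ Poisson commute for all admissible indices, and by antisymmetry we may assume $i \le k$. Since $\fg_i \subseteq \fg_k$, we have $\pi_i = \pi_i^{\fg_k} \circ \pi_k$, where $\pi_i^{\fg_k}\colon \fg_k \to \fg_i$ is the corresponding projection internal to $\fg_k$; hence $f_{i,j} = \pi_k^{*}\big( (\pi_i^{\fg_k})^{*}\psi_{i,j}\big)$ and $f_{k,l} = \pi_k^{*}\psi_{k,l}$ are both pullbacks along the Poisson morphism $\pi_k$.

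First I would record the two ingredients precisely. The first is that $\pi_k \colon \fg \to \fg_k$ is a morphism of Poisson varieties: this is stated in the excerpt just before the definition of $J_{GZ}$, following from the fact that $\fg_k \subseteq \fg$ is an inclusion of Lie algebras and that the trace form is nondegenerate on $\fg_k$ so the identification $\fg_k \cong \fg_k^{*}$ is equivariant. The second is that $(\pi_i^{\fg_k})^{*}\psi_{i,j}$, being the pullback to $\fg_k$ of a $G_i$-invariant function on $\fg_i$, lies in the subalgebra $J(\fg_k)$ of $\C[\fg_k]$, and in particular Poisson commutes with $\psi_{k,l} \in \C[\fg_k]^{G_k}$. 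This last point is itself an instance of the statement being proved, but one level down; so really the cleanest route is induction on $n$ (the base case $n=2$ being trivial), where the inductive hypothesis gives that $J(\fg_k) = J(\fg_{n-1})$ is Poisson commutative, and the only genuinely new commutation to check at stage $n$ is that each $\psi_{n,l}$ Poisson commutes with everything in $\pi_{n}^{*}\big(\C[\fg_{n-1}]\big)$.

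Second I would assemble the argument: for $i \le k \le n$, since Poisson morphisms pull back commuting pairs of functions to commuting pairs, $\{f_{i,j}, f_{k,l}\}_{\fg} = \pi_k^{*}\big(\{ (\pi_i^{\fg_k})^{*}\psi_{i,j},\ \psi_{k,l}\}_{\fg_k}\big)$, and the inner bracket vanishes because $\psi_{k,l} \in \C[\fg_k]^{G_k}$ Poisson commutes with all of $\C[\fg_k]$ — in particular with the $G_k$-invariant (hence arbitrary-function-commuting is not needed, just) function $(\pi_i^{\fg_k})^{*}\psi_{i,j}$. This handles all pairs, so $J_{GZ}$ consists of pairwise Poisson-commuting functions, and since the Poisson bracket is a biderivation, the subalgebra $J(\fg)$ they generate is Poisson commutative.

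The only subtlety — and the reason the excerpt points to the model proof in \cite{CEexp} rather than declaring it obvious — is verifying that $\C[\fg_k]^{G_k}$ Poisson commutes with $\C[\fg_k]$ in the Lie--Poisson structure; this is standard but deserves a sentence. At a point $x \in \fg_k$, the Hamiltonian vector field of $p \in \C[\fg_k]^{G_k}$ evaluated via the bracket against any $q$ is $\langle\langle x, [\nabla p(x), \nabla q(x)]\rangle\rangle$ up to sign, and $G_k$-invariance of $p$ forces $[\nabla p(x), x] = 0$, so by invariance of the form $\langle\langle x, [\nabla p(x), \nabla q(x)]\rangle\rangle = \langle\langle [x, \nabla p(x)], \nabla q(x)\rangle\rangle = 0$. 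I expect this identity, already used in the general linear case, to be the main (and only) technical point; everything else is formal manipulation of Poisson morphisms and the chain of inclusions. As the statement indicates, this is carried out verbatim as in Proposition 2.5 of \cite{CEexp}, the only change being the replacement of the corner embeddings $\fgl(i) \subset \fgl(i+1)$ by the symmetric-subalgebra embeddings $\fso(i) \subset \fso(i+1)$ of Section \ref{ss:symmetricreal}, which does not affect the argument since all that is used is that each inclusion $\fg_i \subset \fg_{i+1}$ is an inclusion of Lie algebras nondegenerate for the trace form.
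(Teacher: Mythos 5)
Your proof is correct and reconstructs essentially the argument the paper is deferring to (Proposition 2.5 of \cite{CEexp}): factor $\pi_i$ through $\pi_k$ for $i\le k$, use that $\pi_k$ is Poisson to push the bracket computation into $\C[\fg_k]$, and invoke the Casimir property of $\C[\fg_k]^{G_k}$. Two small remarks: the inductive detour you suggest is unnecessary once you have written both $f_{i,j}$ and $f_{k,l}$ as pullbacks along $\pi_k$, since the fact that $\psi_{k,l}\in\C[\fg_k]^{G_k}$ Poisson commutes with \emph{all} of $\C[\fg_k]$ already closes the argument without appealing to the statement one level down; and your parenthetical claim that $(\pi_i^{\fg_k})^{*}\psi_{i,j}$ is $G_k$-invariant is false in general (it is only $G_i$-invariant, as e.g.\ the pullback of a linear invariant on $\fgl(1)$ to $\fgl(2)$ shows), though this is harmless since your main line of reasoning never uses it.
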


Our main goals in this paper are to show that the restriction of the functions $J_{GZ}$ to 
any regular $\Ad(G)$-orbit in $\fg$ forms a completely integrable system and to understand the generic leaves of the foliation 
given by the integrable system.  For both of these issues, we need to study the moment map for the system, which we call
 the (orthogonal) \emph{Kostant-Wallach map} or KW map for short.  This 
is the morphism $\Phi:\fg\to\C^{r_{2}}\times\dots\times\C^{r_{n}}$ given by:
\begin{equation}\label{eq:KWmap}
\Phi:\fg\to\C^{r_{2}}\times\dots\times\C^{r_{n}};\; \Phi(x):=(f_{2,1}(x), \dots, f_{i,1}(x),\dots, f_{i, r_{i}}(x),\dots, f_{n,1}(x),\dots, f_{n, r_{n}}(x)).
\end{equation}

\begin{nota}\label{nota:spectrum}
For $x\in\fg$, let $\sigma(x)$ denote the spectrum of $x$.   
If $\fg$ is type $B$, then zero occurs as an eigenvalue of $x$ with multiplicity at least one.  
In this case, we only consider $0\in\sigma(x)$, if it occurs as an eigenvalue 
of $x$ with multiplicity strictly greater than one.
\end{nota}

\begin{rem}\label{r:spectrum}
We observe that if $y\in\Phi^{-1}(\Phi(x))$, then $\sigma(x_{i})=\sigma(y_{i})$ for all $i=2,\dots, n$.  
This follows from the well-known fact that the values of $x_{i}$ on the basic adjoint invariants $\psi_{i,1},\dots, \psi_{i,r_{i}}$ 
in (\ref{eq:generators}) determine the characteristic polynomial of $x_{i}$. 
\end{rem}

Elements of $x\in\fg$ which lie in a regular level set of $\Phi$ play a very important role in the study of the GZ systems. 
\begin{dfn-nota}\label{dfnote:sreg}
An element $x\in\fg$ is said to be \emph{strongly regular }if the differentials 
\begin{equation}\label{eq:GZdiffs}
\{df(x): f\in J_{GZ}\}\subset T_{x}^{*}(\fg)
\end{equation}
are linearly independent elements of the cotangent space $T_{x}^{*}(\fg)$.  We denote
the set of strongly regular elements as $\fg_{sreg}$ and note that $\fg_{sreg}\subset\fg$ is Zariski open.  
For $x\in \fg$, we set $\Phi^{-1}(\Phi(x))_{sreg}:=\Phi^{-1}(\Phi(x))\cap\fg_{sreg}.$  
\end{dfn-nota}
The term strongly regular is suggested by the following classical result of Kostant.  
Let $\fg$ be a complex reductive Lie algebra with adjoint action, and let
$\phi_1, \dots, \phi_r$ be generators of the polynomial algebra $\C[\fg]^G$.
Then (Theorem 9, \cite{Kostant63}) 
\begin{equation}\label{eq:regdiffs1}
x\in\fg_{reg} \mbox{ if and only if } d\phi_{1}(x)\wedge\dots\wedge d\phi_{\ell}(x)\neq 0. 
\end{equation}
Hence, by Equation (\ref{eq:GZfuns}), we see that
\begin{equation}\label{eq:totreg}
\mbox{If } x\in\fg_{sreg},\mbox{ then }\, x_{i}\in\fg_{i} \mbox{ is regular for all } i=2,\dots, n. 
\end{equation}

We briefly recall the notion of an integrable system from symplectic geometry. 
Recall that if $M$ is a complex manifold then the holomorphic functions 
$\{F_{1},\dots, F_{r}\}$ are said to be independent on $m$ if the open subset 
$$
U=\{m\in M: \; dF_{1}(m)\wedge\dots\wedge dF_{r}(m)\neq 0\}
$$
is dense in $M$.  
\begin{dfn}\label{dfn:integrable}
Let $(M,\omega)$ be a complex symplectic manifold of dimension $2r$.  An \emph{integrable system} on $M$ is a collection
of $r$ independent, holomorphic functions $\{F_{1},\dots, F_{r}\}$ which Poisson commute with respect to the natural Poisson bracket 
on the space of holomorphic functions defined by the symplectic form $\omega$. 
\end{dfn}
We recall that for any $x\in\fg$, the adjoint orbit through $x$, $\Ad(G)\cdot x$ is a symplectic manifold with Kostant-Kirillov-Souriau symplectic structure.
  The connection between the complete integrability of the GZ system on regular adjoint orbits and 
strongly regular elements is given by the following proposition.  

\begin{prop}\label{prop:sregintegrable}
Let $x\in\fg_{reg}$, and let $\Ad(G)\cdot x$ be the adjoint orbit of $G$ through $x$.  Then the restriction 
of the GZ functions $J_{GZ}$ in (\ref{eq:GZfuns}) form a completely integrable system on $\Ad(G)\cdot x$ if and only 
if 
\begin{equation}\label{eq:sregintersect}
\Ad(G)\cdot x\cap\fg_{sreg}\neq\emptyset.
\end{equation}
\end{prop}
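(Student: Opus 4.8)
The plan is to reduce the statement to a dimension count on the adjoint orbit, using the fact that $\fg_{sreg}$ is Zariski open together with the Poisson-commutativity of $J(\fg)$. Let $x \in \fg_{reg}$ and set $\mathcal{O} = \Ad(G)\cdot x$, a symplectic manifold of dimension $\dim\fg - r_n$ under the Kostant--Kirillov--Souriau form. The key numerical observation is Equation \eqref{eq:sumri}, which gives $\sum_{i=2}^{n-1} r_i = \tfrac{1}{2}(\dim\fg - r_n) = \tfrac{1}{2}\dim\mathcal{O}$; thus the number of GZ functions that are \emph{not} pulled back from $\C[\fg]^G$ (namely the $f_{i,j}$ with $2 \le i \le n-1$) equals half the dimension of $\mathcal{O}$. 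Since the functions $f_{n,j}$, $j = 1,\dots, r_n$, are the restrictions to $\fg$ of generators of $\C[\fg]^G$, they are constant on $\mathcal{O}$; so the restricted system on $\mathcal{O}$ consists of the $\sum_{i=2}^{n-1} r_i = \tfrac12\dim\mathcal{O}$ functions $f_{i,j}|_{\mathcal{O}}$, which is exactly the right count for an integrable system.

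First I would establish the forward direction. Suppose $\mathcal{O}\cap\fg_{sreg}\neq\emptyset$. By Definition-Notation \ref{dfnote:sreg}, $\fg_{sreg}$ is Zariski open in $\fg$, so $\mathcal{O}\cap\fg_{sreg}$ is a nonempty Zariski-open (hence dense) subset of the irreducible variety $\mathcal{O}$. On this set all $|J_{GZ}|$ differentials $df_{i,j}$ are independent in $T^*\fg$; I would argue that the independence persists after restriction to $\mathcal{O}$ for the subfamily $\{f_{i,j} : 2\le i\le n-1\}$. The point is that at a strongly regular $y\in\mathcal{O}$, the differentials $df_{n,1}(y),\dots,df_{n,r_n}(y)$ already span the conormal space to $\mathcal{O}$ at $y$ (this is Kostant's theorem \eqref{eq:regdiffs1}, since $y$ is $\fg$-regular), so any linear dependence among the restrictions $\{df_{i,j}(y)|_{T_y\mathcal{O}} : 2\le i\le n-1\}$ would lift to a dependence among all of $\{df(y) : f\in J_{GZ}\}$, contradicting strong regularity. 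Hence $\{f_{i,j}|_{\mathcal{O}} : 2\le i \le n-1\}$ are independent on a dense open subset of $\mathcal{O}$, and by Proposition \ref{p:Poissoncommute} they Poisson commute; being $\tfrac12\dim\mathcal{O}$ in number, they form an integrable system in the sense of Definition \ref{dfn:integrable}.

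For the converse, suppose the restricted GZ functions form an integrable system on $\mathcal{O}$. Then by Definition \ref{dfn:integrable} there is a dense open $U\subset\mathcal{O}$ on which $\tfrac12\dim\mathcal{O}$ of the restricted functions have independent differentials; I would take these to be (a subset of) $\{f_{i,j}|_{\mathcal{O}}\}$ and, for $y\in U$, combine the $\tfrac12\dim\mathcal{O}$ independent covectors in $T_y^*\mathcal{O}$ with the $r_n$ covectors $df_{n,j}(y)$ spanning the conormal space $(T_y\mathcal{O})^\perp$ (again Kostant, as $y$ is regular). This produces $\tfrac12\dim\mathcal{O} + r_n = |J_{GZ}|$ independent covectors in $T_y^*\fg$, but one must check these are exactly the $df_{i,j}(y)$ — the subtlety is that the $\tfrac12\dim\mathcal{O}$ independent restricted differentials need not a priori be the images of $\tfrac12\dim\mathcal{O}$ distinct $f_{i,j}$; however, since $\{f_{i,j}|_{\mathcal{O}} : 2\le i\le n-1\}$ spans at most a $\sum_{i=2}^{n-1}r_i$-dimensional space of covectors and this already equals $\tfrac12\dim\mathcal{O}$, independence of \emph{that many} restricted functions forces the whole family $\{df_{i,j}(y) : 2\le i\le n-1\}$ to be independent on $U$. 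Thus every $y\in U$ is strongly regular, so $\mathcal{O}\cap\fg_{sreg}\supseteq U\neq\emptyset$.

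The main obstacle is the bookkeeping in the converse: one must be careful that "some $\tfrac12\dim\mathcal{O}$ of the restricted functions are independent" upgrades to "all of $\{df_{i,j}|_{\mathcal{O}} : 2\le i\le n-1\}$ are independent," which relies essentially on the exact numerical coincidence \eqref{eq:sumri} rather than an inequality. Everything else — density of open subsets in the irreducible $\mathcal{O}$, the conormal-space computation via Kostant \eqref{eq:regdiffs1}, and Poisson commutativity via Proposition \ref{p:Poissoncommute} — is routine. This argument is formally identical to the general linear case, and I would remark that the proof of the analogous statement in \cite{KW1} carries over verbatim once \eqref{eq:sumri} is in hand.
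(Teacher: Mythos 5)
Your proposal is correct and follows the paper's approach: the forward direction argues exactly as in the paper, identifying the conormal space $(T_{y}(\Ad(G)\cdot x))^{\perp}$ with $\fz_{\fg}(y)=\mbox{span}\{df_{n,j}(y)\}$ via Kostant's theorem \eqref{eq:regdiffs1} and deducing independence of the restricted differentials from strong regularity of $y$, with the count supplied by \eqref{eq:sumri}. The paper leaves the converse to the reader, and your reconstruction of it -- running the same conormal argument in reverse and using the exact numerical coincidence $\sum_{i=2}^{n-1} r_{i}=\tfrac{1}{2}\dim(\Ad(G)\cdot x)$ -- is the intended one.
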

\begin{proof}
Consider the GZ functions $J_{GZ}$ and let 
$$
T = \{ f_{i,j} \mbox{ for } i=2,\dots,n -1, j=1,\dots, r_{i}\},
$$
and let $S = \{ f|_{\Ad(G)\cdot x} :  f \in T \}$.
Since the functions $f_{n,j}$, $j=1,\dots, r_{n}$, restrict to constant functions on $\Ad(G)\cdot x$, we do not include them in $T$.
Let $x\in\fg_{reg}$ and let $z\in \Ad(G)\cdot x\cap \fg_{sreg}\neq\emptyset.$
Let $\tilde{R}$ be the span of the differentials
$\{ df(z) : f\in T \} \subset T_z^*(\fg)$,
and let $R$ be the restriction of $\tilde{R}$ to $T_z(\Ad(G)\cdot x)$.
If $\dim(R) < |S|$, there is a non-zero linear functional
$\lambda \in \tilde{R} \cap  T_z(\Ad(G)\cdot x)^{\perp}$.  But this contradicts
the fact that $z\in\fg_{sreg}$, since the trace form identifies $T_z(\Ad(G)\cdot z)^{\perp}$ with the span
of $df_{n,j}(z), j=1, \dots, r_n$, and hence $\dim(R) = |S|$.  The
complete integrability of $S$ on $\Ad(G) \cdot x$ now follows from
standard assertions, Proposition \ref{p:Poissoncommute},
 and Equation (\ref{eq:sumri}).  We leave the converse
assertion to the reader.
\end{proof}
For $\fg=\fgl(n)$, Kostant and Wallach introduced the GZ integrable 
system using a chain of subalgebras analogous to the one we used in (\ref{eq:GZchain}) (Remark \ref{r:generallinear}). 
The definitions of the Kostant-Wallach map and strong regularity are the same.  Abusing notation, we will also refer to Kostant-Wallach map for 
$\fgl(n)$ as $\Phi$.  In the general linear case, Kostant and Wallach show that Equation (\ref{eq:sregintersect}) is satisfied for every $x\in \fgl(n)_{reg}$ (see Theorem 3.36 of \cite{KW1}).  
Their proof makes use of the fact that in this case $\Phi$ possesses a natural cross-section given by the so-called
upper Hessenberg matrices.  
These are matrices of the form:
\begin{equation}\label{eq:Hessenberg}
Hess:=\left [\begin{array}{ccccc}
a_{11} & a_{12} &\cdots & a_{1n-1} & a_{1n}\\
1 & a_{22} &\cdots & a_{2n-1} & a_{2n}\\
0 & 1 & \cdots & a_{3n-1}& a_{3n}\\
\vdots &\vdots &\ddots &\vdots &\vdots\\
0 & 0 &\cdots & 1 &a_{nn}\end{array}\right ]_{,}
\end{equation}
with $a_{ij}\in\C$.
In Theorem 2.3 of \cite{KW1}, the authors prove that the restriction of $\Phi$ to $Hess$ is an isomorphism of varieties from 
which it follows that 
\begin{equation}\label{eq:sregfibre}
\Phi^{-1}(\Phi(x))_{sreg}\neq\emptyset\mbox{ for any } x\in\fgl(n),
\end{equation}
and (\ref{eq:sregintersect}) follows easily.
  It is not clear in the orthogonal case whether $\Phi$ is even surjective nor that every non-empty fibre
contains strongly regular elements.

 \section{The Partial Kostant-Wallach Map and Surjectivity of $\Phi$}\label{s:surjectivity}

Rather than considering all of the Lie algebras in the chain in (\ref{eq:GZchain}) simultaneously and working 
directly with the KW map, it is easier to consider one step in the chain at a time: $\fg_{n-1}\subset\fg$.  Accordingly, 
we define the \emph{partial Kostant-Wallach map} to be 
\begin{equation}\label{eq:partial}
\begin{array}{c}
\Phi_{n}:\fg\to \C^{r_{n-1}}\oplus \C^{r_{n}}, \\
\\
\; \Phi_n(x)=(f_{n-1, 1}(x),\dots, f_{n-1, r_{n-1}}(x), f_{n,1}(x),\dots, f_{n, r_{n}}(x)).
\end{array}
\end{equation}
%where $\C[\fg_{i}]^{G_{i}}=\C[f_{i,1},\dots, f_{i, r_{i}}]$.
The map $\Phi_{n}$ has the advantage that it is a geometric invariant theory quotient (GIT quotient) 
as well as having other remarkable properties.  To see these properties of $\Phi_{n}$,  we will need the theory
of spherical pairs.
% partial KW map $\Phi_{n}$ has some remarkable properties.

\subsection{Spherical pairs and their coisotropy representations}\label{ss:sphercoisotropy}
In this section, we consider pairs $(M, H)$ where $M$ is a reductive algebraic group and $H\subset M$ is an algebraic subgroup.  
Let $\fm$ and $\fh$ be the Lie algebras of $M$ and $H$ respectively.  

%Consider a pair $(M,H)$ where $H$ is an algebraic subgroup of a reductive
%algebraic group $M$.   The pair is called reductive if $H$ is reductive.

\begin{dfn}\label{dfn:spherical}

(1)  The pair $(M, H)$ is called spherical if $H$ acts on the flag variety $\B = \B_{\fm}$ 
with finitely many orbits. 

\noindent (2) The pair $(M,H)$ is called reductive if $H\subset M$ is a reductive algebraic subgroup.  
\end{dfn}

\begin{dfn}\label{dfn:multfree}
For a reductive pair $(M,H)$ we say the branching law from $M$ to $H$
is multiplicity free if for all irreducible rational representations
$V$ and $U$ of $M$ and $H$ respectively, $\dim(\Hom_H(U,V)) \le 1$.
\end{dfn}

For a pair $(M_{1}, R)$, we define a new pair $(M, H)$ by taking 
 $M={\tilde{M}}_1 := M_1 \times R$ and taking $H = R_{\Delta} = \{ (g, g): g\in R\} \subset {\tilde{M}}_1$.   Let ${\tilde{\fm}}_1 = \fm_1 \oplus \fr$ and $\fr_{\Delta}$ be the corresponding Lie algebras.  The following result is well-known.
\begin{prop}\label{prop:isspherical}
\begin{enumerate}
\item Let $(M_1, R)$ be a reductive pair.
The pair $(\tilde{M_1}, R_{\Delta})$ is spherical 
if and only if the branching rule from $M_1$ to $R$ is multiplicity free. 
\item For the pair $(M_1,R)=(SO(n), SO(n-1))$, the pair 
$(\tilde{M_{1}}, R_{\Delta})=(SO(n)\times SO(n-1), SO(n-1)_{\Delta})$ is spherical.
\end{enumerate}
\end{prop}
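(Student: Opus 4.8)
The plan is to prove Proposition \ref{prop:isspherical} in two parts, handling the general criterion first and then the specific orthogonal case.

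For part (1), the key observation is a classical translation between multiplicity-free branching and sphericity. I would argue as follows. First recall that a pair $(M,H)$ with $H$ reductive is spherical (in the sense of Definition \ref{dfn:spherical}) if and only if a Borel subgroup of $M$ has a dense orbit on $M/H$; this is the standard equivalence between the finitely-many-$H$-orbits-on-$\B_\fm$ condition and the existence of a dense Borel orbit on $M/H$ (via the Bruhat-type duality between $B$-orbits on $M/H$ and $H$-orbits on $M/B$), and for reductive $H$ one also knows this is equivalent to $\C[M/H]$ being multiplicity free as an $M$-module, i.e. each irreducible $M$-representation appearing in $\C[M/H]$ does so at most once. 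Now apply this to $M = \tilde M_1 = M_1 \times R$ and $H = R_\Delta$. By the algebraic Peter-Weyl theorem, $\C[(M_1 \times R)/R_\Delta] = \C[M_1]^R$ where $R$ acts on the right, and this decomposes as an $M_1 \times R$-module as $\bigoplus_V V \otimes (V^*)^R$... more precisely $\C[M_1] = \bigoplus_V V \otimes V^*$ and taking $R$-invariants on the right factor gives $\bigoplus_V V \otimes (V^*)^R = \bigoplus_{V, U} V \otimes \Hom_R(U, V^*)^{*}$-type summands, so the multiplicity of the irreducible $M_1\times R$-representation $V \boxtimes U^*$ in $\C[(M_1\times R)/R_\Delta]$ equals $\dim \Hom_R(U, V|_R)$. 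Hence $\C[\tilde M_1/R_\Delta]$ is multiplicity free as an $\tilde M_1$-module if and only if every restriction $V|_R$ contains every irreducible $U$ of $R$ with multiplicity at most one, which is exactly the multiplicity-free branching condition of Definition \ref{dfn:multfree}. This gives the equivalence in (1).

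For part (2), I would invoke the classical branching theorem for $SO(n) \downarrow SO(n-1)$: the restriction of an irreducible representation of $SO(n)$ to $SO(n-1)$ is multiplicity free (indeed it decomposes as a direct sum of irreducibles indexed by "interlacing" highest weights, each appearing exactly once). This is precisely the content of the classical Gelfand-Tsetlin branching rules for the orthogonal groups and can be cited from \cite{GW}, whose notation the paper already follows. Applying part (1) with $(M_1, R) = (SO(n), SO(n-1))$ immediately yields that $(SO(n)\times SO(n-1), SO(n-1)_\Delta)$ is spherical.

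I expect the main subtlety — rather than a genuine obstacle — to be the bookkeeping in part (1): being careful about which group acts on the left versus the right in $\C[M_1]$, correctly identifying $(M_1 \times R)/R_\Delta \cong M_1$ as a variety with the residual $M_1 \times R$-action, and cleanly matching up the three equivalent formulations of sphericity (finitely many orbits on the flag variety, dense Borel orbit, multiplicity-free coordinate ring) in the reductive setting. Since the paper labels this "well-known," I would keep the argument brief, citing standard references for the orbit-counting equivalences (e.g. Brion, Vinberg-Kimelfeld) and for the $SO(n)\downarrow SO(n-1)$ branching rule (\cite{GW}), and spend most of the written proof on the Peter-Weyl computation that converts multiplicity-free branching into multiplicity-freeness of $\C[\tilde M_1/R_\Delta]$.
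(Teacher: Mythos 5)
Your argument is correct in outline but takes a genuinely different (and more self-contained) route than the paper. The paper disposes of part (1) by citing Theorem B of Brundan's paper on multiplicity-free branching together with the observation that a Borel $B_R$ of $R$ has finitely many orbits on $\B_{\fm_1}$ if and only if $R_\Delta$ has finitely many orbits on $\B_{\fm_1}\times\B_{\fr}$; it does not unpack the Vinberg--Kimelfeld/Peter--Weyl argument you give. You instead go through the equivalence ``spherical $\Leftrightarrow$ $\C[M/H]$ multiplicity-free as an $M$-module'' and compute $\C[\tilde M_1/R_\Delta]$ explicitly. What each buys: the paper's proof is a one-line citation and leans on the literature; yours makes the mechanism visible and would survive even if the reader didn't have Brundan's Theorem~B at hand. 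For part (2) the two proofs coincide: both invoke the multiplicity-free branching of $SO(n)\downarrow SO(n-1)$ and apply part (1); the paper cites Johnson's article where you cite \cite{GW}, but either reference works.

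One small bookkeeping slip you should fix: $(M_1\times R)/R_\Delta \cong M_1$ as a variety, via $(g,r)R_\Delta\mapsto gr^{-1}$, so $\C[(M_1\times R)/R_\Delta]$ is just $\C[M_1]$ carrying the left $M_1$-action and the right $R$-action --- not $\C[M_1]^R$. You want no invariant-taking here; the $R$-invariants $\C[M_1]^R=\C[M_1/R]$ would compute a different thing. Your subsequent Peter--Weyl computation quietly corrects this, landing on the right answer: writing $\C[M_1]\cong\bigoplus_V V^*\boxtimes V$ as $M_1\times M_1$-module and restricting the right factor to $R$, the multiplicity of $V^*\boxtimes U$ in $\C[\tilde M_1/R_\Delta]$ is $\dim\Hom_R(U,V|_R)$, so multiplicity-freeness of $\C[\tilde M_1/R_\Delta]$ is exactly the condition in Definition~\ref{dfn:multfree}. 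Stating the isomorphism $(M_1\times R)/R_\Delta\cong M_1$ explicitly at the outset would remove the ambiguity.
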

\begin{proof}
The first statement follows by Theorem B of \cite{brundanspher}, together
with the easy observaton that a Borel subgroup $B_R$ of $R$ has
finitely many orbits on the flag variety $\B_{\fm_{1}}$ of $\fm_{1}$ if and only if
$R_{\Delta}$ has finitely many orbits on $\B_{\fm_{1}} \times \B_{\fr}$.   The
second statement follows from the first statement and well-known
branching laws (see \cite{Johnson}).
\end{proof}

For a reductive spherical pair $(M, H)$, let $\langle\langle\cdot, \cdot\rangle\rangle$ be a non-degenerate symmetric $M$-invariant bilinear form on 
$\fm$, and let $\fh^{\perp}$ be the annihilator of $\fh$ with respect to $\langle\langle\cdot, \cdot\rangle\rangle$.  Then the adjoint action of 
$M$ on $\fm$ restricts to an action of $H$ on $\fh^{\perp}$, which is referred to in the literature 
as the coisotropy representation of $H$ (see \cite{Pancoiso}).  Then it is well-known that $\C[\fh^{\perp}]^{H}$ is a polynomial algebra (Kor 7.2 of \cite{Kn} or Corollary 5 of \cite{Pancoiso}).

\subsection{Surjectivity of $\Phi_n$ and $\Phi$}\label{ss:partialtotalsurj}

We now return to our convention with $G=SO(n)$ and $K=SO(n-1)$, and
similarly with Lie algebras.  We consider the bilinear form
on $\tilde{\fg} = \fg + \fk$ given by taking the trace form on each
factor, and note that by an easy calculation,
\begin{equation}\label{e:productperp}
\fk_{\Delta}^{\perp}=\{(x, -x_{\fk}): x\in\fg, \, x_{\fk}\in \fk\} \cong \fg
\end{equation}
as a $K \cong K_{\Delta}$-representation.

\begin{prop}\label{prop:flat}
\begin{enumerate}

\item $\C[\fg]^K=\C[\fg]^G \otimes \C[\fk]^K$.
\item $\Phi_n$ coincides with the invariant theory quotient
morphism $\fg \to \fg//K.$   In particular, $\Phi_n$ is
surjective.
\item The morphism $\Phi_{n}$ is flat.  In particular, its fibres
are equidimensional varieties of dimension $\dim \fg- r_{n}-r_{n-1}$. 
\end{enumerate}
\end{prop}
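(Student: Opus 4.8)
The plan is to prove the three parts in order, as each feeds into the next. For part (1), the key observation is that $K = K_\Delta \subset G \times K$ acts on $\tilde{\fg} = \fg \oplus \fk$ with $\fk_\Delta^\perp \cong \fg$ as a $K$-representation (Equation \ref{e:productperp}). Since $(\tilde{M_1}, R_\Delta) = (SO(n)\times SO(n-1), SO(n-1)_\Delta)$ is a reductive spherical pair by Proposition \ref{prop:isspherical}(2), the coisotropy representation of $K_\Delta$ on $\fk_\Delta^\perp$ has polynomial invariant ring $\C[\fk_\Delta^\perp]^{K_\Delta}$ by Kor 7.2 of \cite{Kn} (or Corollary 5 of \cite{Pancoiso}). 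Transporting along the $K$-equivariant isomorphism $\fk_\Delta^\perp \cong \fg$, this says $\C[\fg]^K$ is a polynomial algebra. To identify it with $\C[\fg]^G \otimes \C[\fk]^K$, I would argue that the functions $f_{n,j} = \pi_n^*\psi_{n,j}$ (which are just $\psi_{n,j}$, the generators of $\C[\fg]^G$) together with $f_{n-1,j} = \pi_{n-1}^*\psi_{n-1,j}$ (the pullbacks of generators of $\C[\fk]^K$ under the projection $\pi_{n-1}: \fg \to \fk$) all lie in $\C[\fg]^K$: the first are $G$-invariant hence $K$-invariant, and the second are $K$-invariant because $\pi_{n-1}$ is $K$-equivariant and $\psi_{n-1,j}$ is $K$-invariant. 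These $r_n + r_{n-1}$ functions are algebraically independent (one can see this from a generic element, or from the dimension count below), and since the target polynomial ring $\C[\fg]^K$ must have Krull dimension equal to $\dim \fg - \dim(K\cdot x)$ for generic $x$, which by sphericity of the pair and Equation \ref{eq:quotdim} equals $r_n + r_{n-1}$, these functions must generate. This gives $\C[\fg]^K = \C[f_{n,j}, f_{n-1,j}] = \C[\fg]^G \otimes \C[\fk]^K$.

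For part (2), once part (1) is established, the map $\Phi_n: \fg \to \C^{r_{n-1}} \oplus \C^{r_n}$ is by its very definition the morphism whose components are exactly a free generating set of $\C[\fg]^K$, so by the universal property of the GIT quotient $\fg \to \fg/\!/K = \mathrm{Spec}\,\C[\fg]^K$, the map $\Phi_n$ is (after identifying $\C^{r_{n-1}+r_n}$ with $\fg/\!/K$ via these generators) the quotient morphism. Surjectivity of $\Phi_n$ then follows since the GIT quotient morphism of an affine variety by a reductive group is always surjective onto the quotient (it is dominant with closed image).

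For part (3), flatness is where I expect the main content to lie. The strategy is to invoke Knop's flatness theorem (Satz 7.1 / Korollar 7.2 of \cite{Kn}): for a smooth affine spherical $M$-variety, or more precisely for the coisotropy situation, the invariant quotient morphism is flat. Here the relevant statement is that the coisotropy representation of a reductive spherical pair has flat quotient morphism $\fk_\Delta^\perp \to \fk_\Delta^\perp /\!/ K_\Delta$; transporting along $\fk_\Delta^\perp \cong \fg$ and using part (2), this is exactly flatness of $\Phi_n$. Alternatively, since $\fg$ is Cohen-Macaulay (it is smooth) and $\C^{r_{n-1}+r_n}$ is regular, by "miracle flatness" it suffices to show all fibres of $\Phi_n$ have the same dimension $\dim \fg - r_n - r_{n-1}$; one inequality ($\dim \geq$) is automatic since the target has dimension $r_n + r_{n-1}$, and the reverse requires ruling out fibres of excess dimension. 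The equidimensionality statement then also pins the fibre dimension: by Equation \ref{eq:quotdim}, $\dim\fg - r_n - r_{n-1} = \dim\fk$. The hard part will be correctly citing and applying Knop's result to get flatness rather than merely equidimensionality on a dense open set — I would lean on the spherical structure (Proposition \ref{prop:isspherical}) so that Knop's theorem applies directly, making the argument clean.
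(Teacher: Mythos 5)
Your treatment of parts (2) and (3) is fine and matches the paper: once part (1) is established, $\Phi_n$ is by construction the quotient $\fg\to\fg//K$, and flatness is Korollar 7.2 of \cite{Kn} applied to the coisotropy representation of the spherical pair $(\tilde{G},K_\Delta)$ after the identification $\fk_\Delta^\perp\cong\fg$. The problem is part (1), where you have a genuine gap and also take a fundamentally different route than the paper.

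The gap: you conclude ``these functions must generate'' from the fact that the $r_n+r_{n-1}$ invariants $f_{n,j},f_{n-1,j}$ are algebraically independent and sit inside a polynomial ring $\C[\fg]^K$ of Krull dimension $r_n+r_{n-1}$. That inference is not valid. Algebraic independence plus a dimension count only shows $\C[f_{n,j},f_{n-1,j}]$ is a polynomial subring of the same transcendence degree; it does not force equality. (Compare $\C[x^2,y]\subsetneq\C[x,y]$: two algebraically independent homogeneous elements in a two-dimensional polynomial ring, yet not generators.) To close the gap you would need something extra --- for instance that the extension $\C[f\mbox{'s}]\subset\C[\fg]^K$ is finite (via a nullcone comparison) \emph{and} birational, and then invoke normality of $\C^{r_n+r_{n-1}}$; but none of that is in the proposal, and assembling it amounts to re-proving the statement you are trying to cite.

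The paper does something different and stronger: it observes that $U(\fg)^K$ is commutative (hence equals its centre) and invokes Knop's Harish-Chandra isomorphism $Z(U(\fg)^K)\cong U(\fg)^G\otimes_\C U(\fk)^K$ (Theorem 10.1 of \cite{Knhc}, a different paper of Knop than the one you cite for flatness), then passes to associated graded to obtain $\C[\fg]^K=\C[\fg]^G\otimes\C[\fk]^K$. This is not merely the commutative statement that $\C[\fg]^K$ is polynomial --- it identifies the generators, which is exactly what your dimension count fails to do.

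Two minor additional remarks. First, your claim that the Krull dimension of $\C[\fg]^K$ equals $\dim\fg-\dim(K\cdot x)$ for generic $x$ relies on generic finiteness of isotropy for this coisotropy representation, a fact the paper only establishes later (Theorem \ref{thm:regelts} / Lemma \ref{lem:dimest}), so there is a mild circularity issue as well. Second, your ``miracle flatness'' alternative for part (3) would require proving equidimensionality of \emph{all} fibres of $\Phi_n$ independently, which is essentially as hard as flatness; correctly leaning on Korollar 7.2 of \cite{Kn}, as you ultimately do, is the right call and is exactly the paper's argument.
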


\begin{proof} Recall the well-known fact that the fixed point algebra $U(\fg)^K$
of $K$ in the enveloping algebra $U(\fg)$ is commutative \cite{Johnson}.   Hence, $U(\fg)^K$ coincides with its centre,
$Z(U(\fg)^K)$.   In Theorem 10.1 of \cite{Knhc}, Knop shows that
$Z(U(\fg)^K) \cong
U(\fg)^G \otimes_{\C} U(\fg)^K$.  The first assertion now follows by
taking the associated graded algebra with respect to the usual filtration
of $U(\fg)$.  By the first assertion, $\Phi_n$ coincides with the
invariant theory quotient $\fg \to \fg//K$, which gives the second
assertion.   By Equation (\ref{e:productperp}), we identify 
$\fk_{\Delta}^{\perp} \cong \fg$ $K$-equivariantly.    
  Then the flatness
of $\Phi_n$ follows by Korollar 7.2 of \cite{Kn}, which gives a criterion
for flatness of invariant theory quotients in the setting of
 spherical homogeneous spaces (see also \cite{Pancoiso}).
\end{proof}

\begin{nota}\label{nota:partialnil}
For ease of notation, we denote the nilfibre $\Phi_{n}^{-1}(0,0),\, (0,0)\in\C^{r_{n-1}}\times\C^{r_{n}}$ of $\Phi_{n}$ by $\Phi_{n}^{-1}(0)$.
\end{nota}

Using the flatness of the partial KW map $\Phi_{n}$, we can now show that 
the orthogonal KW map $\Phi$ is surjective.

\begin{thm}\label{thm:surj}
Let $\Phi:\fg\to \C^{r_{2}}\times\dots\times \C^{r_{n}}$ be the
the Kostant-Wallach map.  The morphism $\Phi$ is surjective and every fibre of $\Phi$ 
contains a regular element of $\fg$.
\end{thm}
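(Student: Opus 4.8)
The plan is to induct down the chain $\fg_2 \subset \fg_3 \subset \dots \subset \fg_n = \fg$, using at each step the flatness of the partial Kostant-Wallach map together with an interlacing argument that guarantees a fibre contains a regular element. More precisely, for each $i$ with $2 \le i \le n$ let $\Phi^{(i)}: \fg_i \to \C^{r_2} \times \dots \times \C^{r_i}$ denote the Kostant-Wallach map built from the subchain $\fg_2 \subset \dots \subset \fg_i$, so $\Phi = \Phi^{(n)}$. I would prove by downward induction on $i$ the statement: $\Phi^{(i)}$ is surjective and every fibre contains a $\fg_i$-regular element. The base case $i=2$ is trivial since $\fg_2 = \fso(2)$ is abelian of rank $1$ and $\Phi^{(2)}$ is just the single invariant $\psi_{2,1}$, which is surjective onto $\C$ with every fibre consisting of regular (in the rank $1$ sense) semisimple elements.

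For the inductive step, suppose the claim holds for $\fg_{i-1}$. Apply Proposition \ref{prop:flat} with $\fso(i)$ in the role of $\fg$ and $\fso(i-1)$ in the role of $\fk$: the partial map $\fg_i \to \C^{r_{i-1}} \oplus \C^{r_i}$ is the GIT quotient $\fg_i \to \fg_i /\!/ G_{i-1}$, it is flat, and all its fibres are equidimensional of dimension $\dim \fg_i - r_i - r_{i-1}$. Given a target point $(c_2, \dots, c_n) \in \C^{r_2} \times \dots \times \C^{r_n}$ — here I abuse notation and write the general chain — by the inductive hypothesis there is a $\fg_{i-1}$-regular element $x_{i-1} \in \fg_{i-1}$ with $\Phi^{(i-1)}(x_{i-1}) = (c_2, \dots, c_{i-1})$, determining in particular a fixed set of eigenvalues $\sigma(x_{i-1})$ of $x_{i-1}$. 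I want to find $x_i \in \fg_i$ with $\pi_{i-1}(x_i) = x_{i-1}$ (so that $\Phi^{(i-1)}(\pi_{i-1}(x_i))$ is correct), with $x_i$ having the prescribed spectrum coming from $(c_i)$, and with $x_i$ being $\fg_i$-regular. The existence of such an $x_i$ — a "one-step interlacing" statement for $\fso(i-1) \subset \fso(i)$ — is the crux: one needs that for any prescribed $G_{i-1}$-orbit datum of $x_{i-1}$ (regular) and any prescribed $G_i$-invariant datum, the corresponding fibre of $\fg_i \to \fg_i/\!/G_{i-1}$ contains a $\fg_i$-regular element projecting onto $x_{i-1}$. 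Here flatness enters decisively: one produces such a regular element over a generic target by an explicit construction (choosing $x_i$ in a suitable $\fg_{i-1}$-stable complement with generic off-diagonal entries, using that generic such elements are regular in $\fso(i)$ and that their $G_i$-invariants vary with full rank), and then degenerates to the arbitrary target using that the fibres of the flat morphism are equidimensional and that $\fg_i/\!/G_{i-1}$ is irreducible, so that the locus of regular elements, being open and meeting every generic fibre, must meet every fibre by a dimension count (an irreducible component of a fibre has dimension $\dim \fg_i - r_i - r_{i-1} = \dim \fg_{i-1}$ by Equation (\ref{eq:quotdim}), and the non-regular locus has strictly smaller intersection with it). Once such an $x_i$ is found, surjectivity of $\Phi^{(i)}$ and the regularity clause follow immediately, completing the induction; taking $i = n$ gives the theorem.

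The main obstacle I expect is the one-step interlacing claim for $\fso(i-1) \subset \fso(i)$: unlike the $\fgl$ case, there is no Hessenberg cross-section and no clean Cauchy-type interlacing of eigenvalues available, so one cannot simply write down a companion-matrix normal form. The argument therefore has to be made to run purely through the abstract properties of the GIT quotient $\fg_i \to \fg_i /\!/ G_{i-1}$: flatness (hence equidimensional fibres) from Knop's criterion via the spherical pair $(\SO(i)\times\SO(i-1), \SO(i-1)_\Delta)$ of Proposition \ref{prop:isspherical}, and the fact (Kostant, Equation (\ref{eq:regdiffs1})) that $\fg_i$-regularity is an open condition cut out by the non-vanishing of $d\psi_{i,1}\wedge\dots\wedge d\psi_{i,r_i}$. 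The delicate point is verifying that the regular locus actually meets \emph{every} fibre, not just the generic one; this is where I would invoke the equidimensionality from Proposition \ref{prop:flat}(3) most carefully, combined with the fact that a fibre of the composite $\Phi^{(i)}$ fibres over a fibre of $\fg_i \to \fg_i/\!/G_{i-1}$ with fibres that are (by induction) known to contain regular elements of $\fg_{i-1}$, so that the interlacing reduces to choosing the "new" eigenvalue data generically inside a single flat fibre.
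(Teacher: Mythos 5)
Your overall strategy --- induct along the chain $\fg_2\subset\dots\subset\fg_n$ and exploit flatness of the partial Kostant-Wallach map --- is the same as the paper's, but the step you yourself flag as ``delicate'' is where your proposal has a genuine gap. You argue that because the fibres of the flat partial map are equidimensional and the regular locus is open and meets the generic fibre, it ``must meet every fibre by a dimension count,'' asserting that ``the non-regular locus has strictly smaller intersection'' with any fibre. This does not follow: equidimensionality of the fibres of a flat morphism does not prevent an open set from missing an entire fibre (the complement is closed of some codimension in $\fg_i$, but nothing in what you've written stops a whole fibre, or an entire irreducible component of a fibre, from sitting inside that closed locus). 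So the passage from the generic fibre to \emph{every} fibre is not justified.

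The paper closes this gap with Lemma \ref{l:bothreg} via a different mechanism. Set $U_{r,\fk}=\{x\in\fg_{reg}: x_{\fk}\in\fk_{reg}\}$. Flatness implies $\Phi_n$ is \emph{open}, so $\Phi_n(U_{r,\fk})$ is open; if it were not everything, the complement $\mathcal{C}$ would be nonempty closed, and because $\Phi_n$, $U_{r,\fk}$, and the fibres are all homogeneous under $\C^{\times}$-scaling, $\Phi_n^{-1}(\mathcal{C})$ would be a nonempty, closed, scale-invariant set, forcing $0\in\Phi_n^{-1}(\mathcal{C})$, i.e.\ $\Phi_n^{-1}(0)\cap U_{r,\fk}=\emptyset$. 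This is then refuted by an explicit construction: using Kostant's criterion for regular nilpotents and the root-type data in Example \ref{ex:roottypes}, one exhibits a regular nilpotent $e\in\fg$ whose projection $e_{\fk}$ is regular nilpotent in $\fk$, so $e\in\Phi_n^{-1}(0)\cap U_{r,\fk}$. That concrete computation is the content your dimension count cannot substitute for.

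A secondary difference: you ask to lift $x_{i-1}$ to $x_i$ with $\pi_{i-1}(x_i)=x_{i-1}$ exactly, which is stronger than necessary. The paper instead finds (via Lemma \ref{l:bothreg}) any $x\in\Phi_n^{-1}(c_{n-1},c_n)$ with both $x$ and $x_{\fk}$ regular, and then uses Kostant's theorem that regular elements of $\fk$ with the same adjoint invariants are $K$-conjugate to replace $x_{\fk}$ by the inductively produced $y$, conjugating $x$ accordingly. This sidesteps the need to hit a prescribed $x_{i-1}$ on the nose and is cleaner than trying to control the fibre of $\pi_{i-1}$ directly.
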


To prove Theorem \ref{thm:surj}, we need some preparation.  
Consider the nonempty Zariski open set:
\begin{equation}\label{eq:bothopenU}
U_{r,\fk}:=\{x\in \fg_{reg}:\, x_{\fk} \in\fk_{reg}\}.
\end{equation}

%Note that $U\neq \emptyset$, since $\fg_{sreg}\neq 0$ by Theorem 3.2, \cite{Col2} and (\ref{eq:totreg}).  

\begin{lem}\label{l:bothreg}
Let $\Phi_{n}:\fg\to \C^{r_{n-1}}\times \C^{r_{n}}$ be the \emph{partial} KW map. 
The restriction $\Phi_{n}|_{U_{r,\fk}}: U_{r,\fk}\to \C^{r_{n-1}}\times\C^{r_{n}}$ is surjective.
\end{lem}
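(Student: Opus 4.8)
The plan is to combine the flatness of $\Phi_{n}$ (Proposition \ref{prop:flat}(3)) with a $\C^{*}$-equivariance argument, so that the lemma reduces to producing a single well-chosen nilpotent element. Let $\C^{*}$ act on $\fg$ by scalar multiplication. Since each generator $\psi_{i,j}$ in (\ref{eq:generators}) is homogeneous of positive degree (the traces $\mathrm{Tr}(y^{2j})$ of degree $2j$, the Pfaffian of degree $r_{i}\ge 1$), this induces a $\C^{*}$-action on $\C^{r_{n-1}}\times\C^{r_{n}}$ with strictly positive weights for which $\Phi_{n}$ is equivariant. The set $U_{r,\fk}$ of (\ref{eq:bothopenU}) is $\C^{*}$-stable, since scaling by a nonzero constant preserves both $\fg_{reg}$ and $\fk_{reg}$ and commutes with the linear projection $x\mapsto x_{\fk}$. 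Because $\Phi_{n}$ is flat and of finite type it is an open morphism, so $V:=\Phi_{n}(U_{r,\fk})$ is a $\C^{*}$-stable Zariski-open subset of $\C^{r_{n-1}}\times\C^{r_{n}}$.

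First I would observe that it suffices to show $0\in V$. Indeed, $\bigl(\C^{r_{n-1}}\times\C^{r_{n}}\bigr)\setminus V$ is closed and $\C^{*}$-stable; if it were nonempty it would contain a point $p$ together with the closure of $\C^{*}\cdot p$, and since all weights of the action are positive this closure contains the origin, contradicting $0\in V$. Hence $0\in V$ forces $V=\C^{r_{n-1}}\times\C^{r_{n}}$, which is the assertion.

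It remains to exhibit $x\in\fg$ that is regular in $\fg$, whose projection $x_{\fk}$ is regular in $\fk$, and with $\psi_{n,j}(x)=\psi_{n-1,j}(x_{\fk})=0$ for all $j$; equivalently, $x$ should be a regular nilpotent of $\fg=\fso(n)$ whose $\fk$-component is a regular nilpotent of $\fk=\fso(n-1)$. I would write such an $x$ down using the root-type bookkeeping of Example \ref{ex:roottypes}. If $\fg=\fso(2l)$ is of type $D$, take $x=\sum_{i=1}^{l}e_{\alpha_{i}}$, the principal nilpotent of $\fg$: since $\theta=\theta_{2l}$ fixes $e_{\alpha_{i}}$ for $i\le l-2$ and interchanges $e_{\alpha_{l-1}}$ and $e_{\alpha_{l}}$, we have $x\in\fk$, so $x_{\fk}=x$, and by the identification of the simple roots of $\fk$ in Example \ref{ex:roottypes} (the last one having root space $\C(e_{\alpha_{l-1}}+e_{\alpha_{l}})$ in $\fg$) this element is also the principal nilpotent of $\fk$. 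If $\fg=\fso(2l+1)$ is of type $B$ with $l\ge 2$ (the case $\fso(3)$ being immediate with $x=e_{\alpha_{1}}$), let $e_{\fk}=\sum_{i=1}^{l-1}e_{\alpha_{i}}+e_{\eps_{l-1}+\eps_{l}}$ be the principal nilpotent of $\fk=\fso(2l)$; here $\eps_{l-1}+\eps_{l}$ is a compact imaginary root, so $e_{\fk}\in\fk$, while $\fg_{\alpha_{l}}=\fg_{\eps_{l}}\subset\fp$ is noncompact imaginary. Set $x:=e_{\fk}+e_{\alpha_{l}}$, so that $x_{\fk}=e_{\fk}$ is a regular nilpotent of $\fk$; and since $x=\bigl(\sum_{i=1}^{l}e_{\alpha_{i}}\bigr)+e_{\eps_{l-1}+\eps_{l}}$ is the principal nilpotent of $\fg$ plus a root vector for the non-simple root $\eps_{l-1}+\eps_{l}=\alpha_{l-1}+2\alpha_{l}$, it again lies in the principal nilpotent orbit of $\fg$, hence is a regular nilpotent of $\fg$.

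The routine ingredients here — the $\C^{*}$-equivariance, the implication that a flat finite-type morphism is open, and the fact that a closed $\C^{*}$-stable subset of a positively graded affine space is empty once it avoids the origin — are immediate. The point that needs the most care is the type $B$ construction, where neither the principal nilpotent of $\fg$ nor that of $\fk$ has regular projection by itself, so one must mix them; the verification that $e_{\fk}+e_{\alpha_{l}}$ simultaneously has both regularity properties rests on the root classification of Example \ref{ex:roottypes} (that $e_{\eps_{l-1}+\eps_{l}}\in\fk$, that $e_{\eps_{l}}\in\fp$, and that $\eps_{l-1}+\eps_{l}$ is not simple in $\fso(2l+1)$) together with the standard fact — provable by an iterative conjugation within the Borel subgroup, done degree by degree — that adding a higher-root vector to a principal nilpotent keeps it in the principal nilpotent orbit.
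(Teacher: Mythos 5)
Your proof is correct and follows essentially the same two steps as the paper's: reduce, via flatness, openness, and the $\C^\times$-action (you phrase the reduction on the target side; the paper pulls the complement back to the source, but the idea is identical), to exhibiting a single element of $\Phi_n^{-1}(0)\cap U_{r,\fk}$, and then produce the same explicit nilpotents — in type $B$ your $e_{\fk}+e_{\alpha_l}$ is literally the paper's element, and in type $D$ your $\sum e_{\alpha_i}$ differs from the paper's only by a harmless scalar on the last two summands. One small economy the paper achieves: rather than arguing that adding a higher-root vector preserves the principal orbit, it invokes Kostant's criterion (regular nilpotent iff all simple-root components are nonzero) directly, which makes both regularity checks one-line observations.
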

\begin{proof}
Since $\Phi_{n}$ is a flat morphism, it is an open morphism by Exercise III.9.1
of \cite{Ha}.  
Thus, $\Phi_{n}(U_{r,\fk})\subseteq \C^{r_{n-1}}\times \C^{r_{n}}$ is  Zariski open.  
We suppose that $\Phi_{n}(U_{r,\fk})\neq \C^{r_{n-1}}\times \C^{r_{n}}.$  
Then $\mathcal{C}:=\C^{r_{n-1}}\times\C^{r_{n}}\setminus \Phi_{n}(U_{r,\fk})$ is a non-empty, closed subset 
of $\C^{r_{n-1}}\times \C^{r_{n}}$.  Since $\Phi_{n}$ is surjective, it follows that $\Phi_{n}^{-1}(\mathcal{C})$ is a non-empty,  closed subset of $\fg$.  
Now it follows from definitions that
\begin{equation}\label{eq:Cdfn}
x\in\Phi_{n}^{-1}(\mathcal{C})\mbox{ if and only if } \Phi_{n}^{-1}(\Phi_{n}(x))\cap U_{r,\fk}=\emptyset.
\end{equation}
Since the functions defining $\Phi_{n}$ are homogeneous (Equation (\ref{eq:partial})), scalar multiplication by 
$\lambda \in \C^{\times}$ induces an isomorphism:  $\Phi_{n}^{-1}(\Phi_{n}(x))\to \Phi_{n}^{-1}(\Phi_{n}(\lambda x))$.  
Note that $U_{r,\fk}$ is also preserved by scalar multiplication by elements of $\C^{\times}$.  Thus, (\ref{eq:Cdfn}) implies 
that $\Phi_{n}^{-1}(\mathcal{C})$ is preserved by scalar multiplication by elements of $\C^{\times}$.  
Since $\Phi_{n}^{-1}(\mathcal{C})$ is closed, it follows that $0\in \Phi_{n}^{-1}(\mathcal{C})$, whence 
\begin{equation}\label{eq:empty}
\Phi_{n}^{-1}(0)\cap U_{r,\fk}=\emptyset.
\end{equation}
But we claim that $\Phi_{n}^{-1}(0)\cap U_{r,\fk}\neq \emptyset$, contradicting the initial assumption that $\Phi_{n}(U_{r,\fk})\neq \C^{r_{n-1}}\times\C^{r_{n}}$.

For this,  let $\fb_{+}\subset \fg$ be the Borel subalgebra of upper triangular matrices in $\fg$, and let $\fn_{+}=[\fb_{+},\fb_{+}]$.  
We consider the cases where $\fg$ is type $B$ and $D$ separately.  If $\fg=\fso(2l+1)$ is type $B$, we let 
$e:=e_{\alpha_{1}}+\dots +e_{\alpha_{l}}+e_{\eps_{l-1}+\eps_{l}}$, where $e_{\alpha_{i}}\in \fg_{\alpha_{i}}, \, i=1,\dots, l$ and $e_{\eps_{l-1}+\eps_{l}}\in\fg_{\eps_{l-1}+\eps_{l}}$ are nonzero root vectors. 
Then by Kostant's criterion for an element to be regular nilpotent
(Theorem 5.3 of \cite{Kostanttds}),
$e\in\fn_{+}$ is regular nilpotent and $\pi_{\fk}(e)=e_{\alpha_{1}}+\dots+e_{\alpha_{l-1}}+e_{\eps_{l-1}+\eps_{l}}$ by Example \ref{ex:roottypes}.  By Kostant's criterion,
the element $\pi_{\fk}(e)$ is a regular nilpotent element of $\fk$.  Thus, $e\in\Phi_{n}^{-1}(0)\cap U_{r,\fk}$.   

Now let $\fg=\fso(2l)$ be type $D$.  Let $e=e_{\alpha_{1}}+\dots +e_{\alpha_{l-2}}+ \frac{1}{2}(e_{\alpha_{l-1}}+\theta(e_{\alpha_{l-1}}))$.  
By Example \ref{ex:roottypes}, we have $\theta(\alpha_{l-1})=\alpha_{l}$, so $e$ is regular nilpotent by Kostant's criterion.  Example \ref{ex:roottypes} also implies
that $e\in\fk$, and that in terms of the simple roots for $\fk=\fso(2l-1)$, we have $e=e_{\alpha_{1}}+\dots +e_{\alpha_{l-2}}+e_{\alpha_{l-1}}$.  Therefore, again by Kostant's criterion,
$e$ is also regular nilpotent when viewed as an element of $\fk$.  Thus, $e\in \Phi_{n}^{-1}(0)\cap U_{r,\fk}$.  

We conclude that $\Phi_{n}^{-1}(0)\cap U_{r,\fk}\neq \emptyset$, contradicting (\ref{eq:empty}) and the initial assumption that $\Phi_{n}|_{U_{r,\fk}}$ is not surjective. 

\end{proof}

\begin{proof}[Proof of Theorem \ref{thm:surj}]
The proof proceeds by induction on $n$.  The case $n=3$, follows from the fact that $\fso(3)\cong\fsl(2)$, and the theorem is known to hold 
for $\fsl(2)$.  We now assume that the result is true for $\fk$.  Let $(c_{2},\dots, c_{n-1}, c_{n})\in \C^{r_{2}}\times \dots \times \C^{r_{n-1}}\times \C^{r_{n}}$ be arbitrary.  Consider $\Phi^{-1}(c_{2},\dots, c_{n-1}, c_{n})$.  Let $\Phi_{\fk}: \fk\to \C^{r_{2}}\times \dots\times \C^{r_{n-1}}$ be the KW map 
for $\fk$.  By the inductive hypothesis, there exists an element $y\in \Phi_{\fk}^{-1}(c_{2},\dots, c_{n-1})$ with $y\in\fk_{reg}$.  
By Lemma \ref{l:bothreg}, there exists $x\in\Phi_{n}^{-1}(c_{n-1},c_{n})$ with $x\in\fg_{reg}$ and $x_{\fk}\in\fk_{reg}$.  
It follows that $x_{\fk},\, y\in \chi_{\fk}^{-1}(c_{n-1})\cap\fk_{reg}$, where $\chi_{\fk}:\fk\to \fk//K$ is the adjoint quotient, 
so that $y=\Ad(k)\cdot x_{\fk}$ for some $k\in K$ by Theorem 3 of \cite{Kostant63}.  Let $z:=\Ad(k)\cdot x$.  Then $z\in\Phi_{n}^{-1}(c_{n-1},c_{n})$ is regular, and $z_{\fk}=\Ad(k)\cdot x_{\fk}=y$.  It follows 
from definitions that $z\in \Phi^{-1}(c_{2},\dots, c_{n-1}, c_{n})$.  This completes the proof of the theorem.
\end{proof}

\section{Spherical Pairs and strongly regular elements}\label{s:spherical}

In this section, we use the geometry of spherical varieties to study the set of strongly regular elements of $\fg_{sreg}$ of $\fg$ introduced in Definition-Notation \ref{dfnote:sreg}.  
We simplify the criterion of Kostant and Wallach for an element $x\in \fg$ to be strongly regular and use our new characterization of strongly regular elements to 
construct a new set of strongly regular elements $\fg_{\Theta}$ for $\fg$.  As a consequence, we obtain the complete integrability 
of the GZ system on regular adjoint orbits on $\fg$ and determine the structure of the KW fibres $\Phi^{-1}(\Phi(x))$ for $x\in\fg_{\Theta}$.  We show that each such fibre 
has a free action of an abelian algebraic group thereby obtaining ``angle coordinates" for the GZ system on these fibres.  
%and obtain ``angle coordinates" 
%for the integrable system on these fibres using a free action of an abelian algebraic group. 

\subsection{Further results on the coisotropy representation of a spherical pair}\label{ss:morecoisotropy}
Recall the reductive spherical pair $(M,H)$ from Definition \ref{dfn:spherical}.  We consider reductive spherical pairs satisfying: 
\begin{equation}\label{eq:numerology}
\dim\B=\dim \fh^{\perp}-\dim \fh^{\perp}// H.  
\end{equation}

%THE NOTATION HERE IS A BIT INCONSISTENT $\B$ or $\B_{\fm}$.
\begin{rem}\label{r:appendix}
If we let $G=GL(n)$, and $K=GL(n-1)$, where $K$ is embedded in $G$ in the top lefthand 
corner (Remark \ref{r:generallinear}), then the pair $(\tilde{G}, K_{\Delta})$ is spherical 
by Part (1) of Proposition \ref{prop:isspherical}.  Further, this pair satisfies Equation (\ref{eq:numerology}).  
In Theorem 2.3, \cite{CEeigen} we gave a different proof of Knop's flatness result (Korollar 7.2, \cite{Kn}) for the spherical
pair $(GL(n)\times GL(n-1), GL(n-1)_{\Delta})$ using a variant of the Steinberg variety along with a dimension estimate obtained from (\ref{eq:numerology}).  Our proof can be generalized to any reductive spherical pair $(M,H)$ satisfying Equation (\ref{eq:numerology}).  See the appendix of \cite{CEspherical} for details. 
\end{rem}

We now analyze further the meaning of the condition in Equation
 (\ref{eq:numerology}) 
for the coisotropy representation.  
We study the set of $H$-regular elements $\fh_{reg}^{\perp}$ in $\fh^{\perp}$
consisting of $H$-orbits of maximal dimension, and recall the regular set
$\fm_{reg}$ of $\fm$ (Notation \ref{nota:thenotation} (6)).

\begin{thm}\label{thm:regelts}
Let $(M, H)$ be a reductive spherical pair.  Then the following conditions are equivalent.
\begin{enumerate}
\item Equation (\ref{eq:numerology}) holds.
\item We have $\fh^{\perp}_{reg}\subset \fm_{reg}$. 
\end{enumerate}
\end{thm}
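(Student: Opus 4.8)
The plan is to compare, for $\xi$ ranging over $\fh^\perp$, the dimension of the $M$-orbit $\Ad(M)\cdot\xi$ (an adjoint orbit in $\fm$) with that of the $H$-orbit $H\cdot\xi$ inside it. Write $\mathrm{rk}\,\fm$ for the rank of $\fm$, so that $\dim\B=\tfrac12(\dim\fm-\mathrm{rk}\,\fm)$, and recall (Kostant; Notation \ref{nota:thenotation}(6)) that $\xi\in\fm_{reg}$ iff $\dim\fz_\fm(\xi)=\mathrm{rk}\,\fm$, i.e.\ iff $\dim\Ad(M)\cdot\xi=2\dim\B$; this is the maximal possible dimension of an adjoint orbit, so $\dim\Ad(M)\cdot\xi\le 2\dim\B$ always. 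Let $\delta$ be the maximal dimension of an $H$-orbit in $\fh^\perp$, so $\fh^\perp_{reg}=\{\xi:\dim H\cdot\xi=\delta\}$; since $\C[\fh^\perp]^H$ is polynomial and, by the analysis of the coisotropy representation in \cite{Kn} and \cite{Pancoiso}, the generic fibre of $\fh^\perp\to\fh^\perp//H$ is a single $H$-orbit, we get $\delta=\dim\fh^\perp-\dim(\fh^\perp//H)$. Thus condition (1) (Equation \ref{eq:numerology}) is exactly the statement $\delta=\dim\B$, while condition (2) says that every $\xi$ with $\dim H\cdot\xi=\delta$ satisfies $\dim\Ad(M)\cdot\xi=2\dim\B$.

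The one elementary ingredient is an isotropy lemma: for every $\xi\in\fh^\perp$, the orbit $H\cdot\xi$ is an isotropic submanifold of $\Ad(M)\cdot\xi$ for the Kostant--Kirillov--Souriau form. Indeed $T_\xi(\Ad(M)\cdot\xi)=[\fm,\xi]$ and the KKS form is $\omega_\xi([X,\xi],[Y,\xi])=\langle\langle\xi,[X,Y]\rangle\rangle$; for $X,Y\in\fh$ one has $[X,Y]\in\fh$ and $\xi\in\fh^\perp$, so $\omega_\xi$ vanishes on $T_\xi(H\cdot\xi)\times T_\xi(H\cdot\xi)$. Consequently $\dim\Ad(M)\cdot\xi\ge 2\dim H\cdot\xi$; in particular, taking $\xi\in\fh^\perp_{reg}$ gives $2\delta\le\dim\Ad(M)\cdot\xi\le 2\dim\B$, so $\delta\le\dim\B$ unconditionally.

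Granting the reformulation, $(1)\Rightarrow(2)$ is then immediate: if $\delta=\dim\B$ and $\xi\in\fh^\perp_{reg}$, the chain $2\dim\B=2\delta\le\dim\Ad(M)\cdot\xi\le 2\dim\B$ forces $\dim\Ad(M)\cdot\xi=2\dim\B$, i.e.\ $\xi\in\fm_{reg}$; hence $\fh^\perp_{reg}\subseteq\fm_{reg}$. Note this direction uses only the isotropy lemma together with the value of $\delta$.

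For $(2)\Rightarrow(1)$ I would invoke sphericality in an essential way. Because $(M,H)$ is spherical (Definition \ref{dfn:spherical}), $T^*(M/H)=M\times_H\fh^\perp$ is a coisotropic Hamiltonian $M$-variety; translating this along the moment map (equivalently, by Panyushev's analysis in \cite{Pancoiso}), the generic $H$-orbit $H\cdot\xi$ in $\fh^\perp$ is not merely isotropic but coisotropic, hence Lagrangian, in $\Ad(M)\cdot\xi$. Therefore $\dim\Ad(M)\cdot\xi=2\dim H\cdot\xi=2\delta$ for generic $\xi\in\fh^\perp$, and since $\dim\Ad(M)\cdot\xi\ge 2\delta$ everywhere on $\fh^\perp_{reg}$ by the lemma while $\dim\Ad(M)\cdot\xi$ is lower semicontinuous, the equality $\dim\Ad(M)\cdot\xi=2\delta$ propagates to all of $\fh^\perp_{reg}$. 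If now (2) holds, then $2\dim\B=\dim\Ad(M)\cdot\xi=2\delta$, so $\delta=\dim\B$, which is (1). The main obstacle is precisely this coisotropy input — that sphericality of $M/H$ forces the generic $H$-orbit in the coisotropy representation to be Lagrangian in its ambient adjoint orbit; one may either derive it from the standard equivalence ``$M/H$ spherical $\iff$ $T^*(M/H)$ coisotropic'' via the moment map, or cite \cite{Pancoiso} directly. Everything else — the reformulation, the isotropy lemma, and the semicontinuity argument — is routine.
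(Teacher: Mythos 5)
Your proof is correct and follows the same overall skeleton as the paper's: reduce both conditions to the comparison of $\delta=\dim\fh^\perp_{reg}$-orbit$=\dim\fh^\perp-\dim(\fh^\perp//H)$ with $\dim\B$, use the inequality $\dim\Ad(M)\cdot\xi\geq 2\dim\Ad(H)\cdot\xi$ for $(1)\Rightarrow(2)$, and use the generic Lagrangian property of $H$-orbits in the coisotropy representation for $(2)\Rightarrow(1)$. The paper cites Panyushev for both ingredients (Proposition~1 for the inequality, Theorems~3, 6 and Eq.~(15) for the generic equality). What you do differently, and what it buys: you give a short, self-contained derivation of the inequality via the KKS form — observing that $T_\xi(H\cdot\xi)=[\fh,\xi]$ is isotropic in $(\Ad(M)\cdot\xi,\omega_\xi)$ because $\omega_\xi([X,\xi],[Y,\xi])=\langle\langle\xi,[X,Y]\rangle\rangle=0$ when $X,Y\in\fh$ and $\xi\in\fh^\perp$ — which makes the $(1)\Rightarrow(2)$ direction essentially elementary and independent of sphericality. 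For $(2)\Rightarrow(1)$ you invoke the ``$M/H$ spherical $\iff T^*(M/H)$ coisotropic'' equivalence rather than Panyushev's theorems directly; this is the same underlying fact but stated at a more structural level. Two small remarks: the lower-semicontinuity propagation of $\dim\Ad(M)\cdot\xi=2\delta$ to all of $\fh^\perp_{reg}$ is correct (it uses that $\fh^\perp_{reg}$ is irreducible, being open in the vector space $\fh^\perp$), but it is unnecessary — you only need the Lagrangian identity at a single generic point, which is where the paper evaluates as well. Also, the phrase ``the generic fibre of $\fh^\perp\to\fh^\perp//H$ is a single $H$-orbit'' is exactly equivalent to the paper's Equation~(4.5), so you and the paper are citing the same content in different clothing.
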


\begin{proof}
By Theorems 3 and 6 and Equation (15) 
of \cite{Pancoiso}, there is an open dense set $U$ of $\fh^{\perp}_{reg}$ such that for $y\in U$,  
\begin{equation}\label{eq:general1}
\codim_{\fh^{\perp}} (\Ad(H)\cdot y)=\dim (\fh^{\perp}//H),
\end{equation}
and 
\begin{equation}\label{eq:general2} 
\dim(\Ad(M)\cdot y)=2\dim(\Ad(H)\cdot y).
\end{equation}
We consider $x\in\fh_{reg}^{\perp}$.  By (\ref{eq:general1}), we have 
$\codim_{\fh^{\perp}} (\Ad(H)\cdot x)=\dim (\fh^{\perp}//H)$.  Assuming
(1), we see that 
\begin{equation}\label{eq:firstdimension}
\dim (\Ad(H)\cdot x)=\dim \B.
\end{equation}
 By Proposition (1)
of \cite{Pancoiso}, $2\dim(\Ad(H)\cdot x) \le \dim(\Ad(M)\cdot x)$ so that
$\dim(\Ad(M)\cdot x) \ge 2 \dim(\B)$, and thus $x \in \fm_{reg}$, which
proves one direction of the assertion.  

Conversely, for any element $x$ in the open set $U\subset \fh_{reg}^{\perp}$, then $x\in\fm_{reg}$ by assumption.  Hence by 
(\ref{eq:general2}), $\dim(\Ad(H)\cdot x)=\frac{1}{2}\dim(\Ad(M)\cdot x)=\dim(\B)$.  The assertion of (1) now follows from Equation (\ref{eq:general1}).

\end{proof}

\begin{rem}\label{r:symmnumerical}
If $\theta$ is an involution of $\fm$ with fixed subalgebra $\fh=\fm^{\theta}$,
the involution is called quasi-split if $\fm$ has a  Borel
subalgebra $\fb$ such that $\fb \cap \theta(\fb)$ is a Cartan subalgebra of 
$\fm$.   In Proposition 4.4 of \cite{CEspherical}, we show that for
$M$ and $H$ the corresponding connected groups, $(M,H)$ satisfies 
Equation (\ref{eq:numerology}) if and only if $\theta$ is quasi-split.
\end{rem}

\subsection{A new criterion for strong regularity}
We now apply Theorem \ref{thm:regelts} to study the $K$-action on 
$\fg$ in the case where $(K, \fg)=(SO(n-1), \fso(n))$ and prove 
an analogue of Kostant's Theorem (\ref{eq:regdiffs1}) for the action of $K$ on $\fg$ by restricting 
the adjoint action of $G$ to $K$ (Theorem \ref{thm:Kostant}). 
For this, we consider the reductive spherical pair $(\tilde{G}, K_{\Delta})$ with
$\tilde{G}=G\times K$.  Recall that we identify $\fk_{\Delta}^{\perp}\cong\fg$ as a $K_{\Delta}\cong K$-module (Equation (\ref{e:productperp})).  

\begin{lem} \label{lem:dimest} Consider the reductive spherical pair $(\tilde{G}, K_{\Delta})$.
\begin{enumerate} \item  Equation (\ref{eq:numerology}) holds.
\item $(\fk_{\Delta}^{\perp})_{reg}\cong\{x\in\fg:\, \dim(\Ad(K)\cdot x)=\dim K\}. $ 
\item \begin{equation}\label{eq:Kreg}
(\fk_{\Delta}^{\perp})_{reg}\cong\{x\in\fg: \fz_{\fk}(x_{\fk})\cap\fz_{\fg}(x)=0\}.
\end{equation} 
\end{enumerate}
\end{lem}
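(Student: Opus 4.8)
The plan is to deduce all three parts from the general machinery of Theorem \ref{thm:regelts} applied to the reductive spherical pair $(\tilde G, K_\Delta) = (G \times K, K_\Delta)$, using the identification $\fk_\Delta^\perp \cong \fg$ of Equation (\ref{e:productperp}) as a $K_\Delta \cong K$-module. First, for part (1), I would verify Equation (\ref{eq:numerology}) for this pair directly, i.e.\ check that $\dim \B_{\tilde\fg} = \dim \fk_\Delta^\perp - \dim(\fk_\Delta^\perp /\!/ K_\Delta)$. Here $\B_{\tilde\fg} = \B_\fg \times \B_\fk$, so $\dim\B_{\tilde\fg} = \dim\B_\fg + \dim\B_\fk$; and $\fk_\Delta^\perp \cong \fg$ has dimension $\dim\fg$, while $\fk_\Delta^\perp/\!/K_\Delta \cong \fg /\!/K$ has dimension $r_n + r_{n-1}$ by Proposition \ref{prop:flat}(1),(2). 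So the required identity is exactly $\dim\B_\fg + \dim\B_\fk = \dim\fg - r_n - r_{n-1}$, which is Equation (\ref{eq:dimbigflag}). That settles (1).

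Second, part (2) follows by combining (1) with Theorem \ref{thm:regelts}: that theorem gives $\fh^\perp_{reg} \subset \fm_{reg}$, but for the elementary description of $(\fk_\Delta^\perp)_{reg}$ I want the actual computation of the maximal $K$-orbit dimension. The point is that under $\fk_\Delta^\perp \cong \fg$, the action of $K_\Delta \cong K$ is the restriction of $\Ad(G)$ to $K$ acting on $\fg$. The generic $K$-orbit on $\fg$ has dimension $\dim K - \dim(\text{generic stabilizer})$; and by the theory in \cite{Pancoiso} (specifically the fact that for a spherical pair satisfying (\ref{eq:numerology}) the generic isotropy of the coisotropy representation is trivial, cf.\ Remark \ref{r:symmnumerical} and the quasi-split condition), the generic $K$-stabilizer is trivial, so the maximal orbit dimension is $\dim K$. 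Hence $(\fk_\Delta^\perp)_{reg} \cong \{x \in \fg : \dim(\Ad(K)\cdot x) = \dim K\}$, giving (2). (Alternatively one can argue: $\dim(\Ad(K)\cdot x) \le \dim K$ always, and \emph{some} $x$ attains it since $K$ acts faithfully on $\fg$ with a point of trivial stabilizer — e.g.\ a suitably generic semisimple element — so "maximal dimension" means exactly "$=\dim K$.")

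Finally, part (3) is a reformulation of (2) via the Lie algebra of the stabilizer: for $x \in \fg \cong \fk_\Delta^\perp$, the stabilizer of $x$ in $K_\Delta$ corresponds to $\{(k,k) : k \in Z_K(x_\fk),\ \Ad(k)x = x\}$ once one unwinds the identification $(x,-x_\fk) \leftrightarrow x$ in Equation (\ref{e:productperp}) — the element $(x, -x_\fk) \in \fk_\Delta^\perp$ is fixed by $(k,k)$ iff $\Ad(k)x = x$ and $\Ad(k)x_\fk = x_\fk$, and since $x_\fk$ is the $\fk$-component of $x$ the second condition is automatic from the first. Differentiating, the Lie algebra of this stabilizer is $\{Y \in \fk : [Y,x] = 0\} = \fz_\fk(x_\fk) \cap \fz_\fg(x)$ — wait, more precisely it is $\fz_\fk(x) = \{Y \in \fk : [Y,x]=0\}$, and one checks $\fz_\fk(x) = \fz_\fk(x_\fk)\cap\fz_\fg(x)$: the inclusion $\subseteq$ holds because $Y \in \fk$ commuting with $x = x_\fk + x_\fp$ must separately commute with the $\fk$- and $\fp$-parts (as $[\fk,\fk]\subset\fk$, $[\fk,\fp]\subset\fp$, so $[Y,x_\fk] \in \fk$ and $[Y,x_\fp]\in\fp$ and their sum is zero), hence $Y \in \fz_\fk(x_\fk)$ and $Y \in \fz_\fg(x)$; the reverse inclusion is trivial. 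Thus $\dim(\Ad(K)\cdot x) = \dim K$ iff $\fz_\fk(x_\fk)\cap\fz_\fg(x) = 0$, which combined with (2) yields (\ref{eq:Kreg}).

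I expect the main obstacle to be part (2): pinning down that the generic $K$-isotropy on $\fg$ is trivial (not merely finite) so that "maximal orbit dimension" is literally "$=\dim K$." This is where one genuinely needs input beyond dimension-counting — either the precise statements of Panyushev's results in \cite{Pancoiso} on coisotropy representations, or the quasi-split/symmetric-space structure from Remark \ref{r:symmnumerical} together with a direct check that a generic element of $\fg$ has trivial $K$-centralizer in the group (the Lie-algebra statement $\fz_\fk(x_\fk)\cap\fz_\fg(x)=0$ for generic $x$ being the infinitesimal shadow of this). Parts (1) and (3) are then essentially bookkeeping with the numerical identities of Section \ref{ss:numerical} and the $\fk$-$\fp$ decomposition.
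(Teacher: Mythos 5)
Parts (1) and (3) of your plan match the paper's proof exactly: part (1) reduces to Equation (\ref{eq:dimbigflag}) plus the identification $\dim(\fg//K)=r_n+r_{n-1}$ from Proposition \ref{prop:flat}, and part (3) is the observation that the stabilizer Lie algebra is $\fz_{\fk}(x)=\fz_{\fk}(x_{\fk})\cap\fz_{\fg}(x)$ (your verification of this equality is the same routine $\fk$-$\fp$ decomposition used implicitly in the paper).

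Where your plan diverges, and where there is a genuine gap, is part (2). You correctly note that Theorem \ref{thm:regelts} as \emph{stated} only gives the inclusion $(\fk_\Delta^\perp)_{reg}\subset\tilde\fg_{reg}$, and you then reach for something stronger --- triviality of the generic $K$-isotropy, attributed to Panyushev or to the quasi-split condition in Remark \ref{r:symmnumerical}. Two problems. First, this overshoots: for part (2) you only need the generic isotropy to be \emph{finite}, i.e. $\dim\fz_{\fk}(x)=0$ on an open set; triviality of the full group $Z_K(x)$ is a much harder statement that the paper only establishes later (Lemma \ref{l:centralizers}), by a separate and nontrivial case analysis. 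You have flagged this ``trivial vs.\ finite'' issue yourself, but misread which one is actually required. Second, the clean route the paper actually takes is hiding inside the \emph{proof} of Theorem \ref{thm:regelts}: under hypothesis (1), Equation (\ref{eq:firstdimension}) gives $\dim(\Ad(K)\cdot x)=\dim\B_{\tilde\fg}=\dim\B_\fg+\dim\B_\fk$ for every $x\in(\fk_\Delta^\perp)_{reg}$, and then Equations (\ref{eq:flagvarieties}) and (\ref{eq:quotdim}) turn $\dim\B_\fg+\dim\B_\fk$ into $\dim K$. That directly identifies the maximal orbit dimension with $\dim K$, with no appeal to triviality of isotropy at all. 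Your parenthetical alternative (exhibit a concrete $x$ with $\fz_\fk(x)=0$) would also work and is logically independent of part (1), but as written it asserts the existence of such an $x$ without proof; the paper only produces such elements later, in Theorem \ref{thm:gzero}.
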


\begin{proof}
Equation (\ref{eq:numerology}) is equivalent to the 
identity 
\begin{equation}\label{eq:flagvarieties}
\dim(\B_{\fg}) + \dim(\B_{\fk})=\dim(\fg)-r_{n}-r_{n-1} =\dim(\fg)-\dim(\fg//K),
\end{equation}
which follows by Equation (\ref{eq:dimbigflag}) and Proposition \ref{prop:flat} (2).  
To prove the second assertion, let 
$x\in (\fk_{\Delta})^{\perp}_{reg}$.  By (1), we can
apply Equation (\ref{eq:firstdimension}) to conclude that $\dim (\Ad(K)\cdot x)=\dim (\B_{\fg})+\dim (\B_{\fk}).$  
The assertion now follows from (\ref{eq:flagvarieties}) and Equation (\ref{eq:quotdim}).  
The second assertion implies that $x\in (\fk_{\Delta}^{\perp})_{reg}$ if and only if $\fz_{\fk}(x)=0$.  The third 
assertion follows since $\fz_{\fk}(x)=\fz_{\fk}(x_{\fk})\cap\fz_{\fg}(x)$. 
\end{proof}

We now describe the regular elements 
of the coisotropy representation of the reductive spherical pair $(\tilde{G}, K_{\Delta})$, which establishes an analogue of
Kostant's theorem.  Recall that an element 
$x\in\fg_{reg}$ if $\dim\fz_{\fg}(x)=\mbox{rank}(\fg)$.  If we identify $T_{x}^{*}(\fg)$ with $\fg$ using 
 the non-degenerate form on $\fg$, then Kostant's basic result in (\ref{eq:regdiffs1}) implies that 
 \begin{equation}\label{eq:centralizer}
 \mbox{span}\{ df_{n,i}(x):\, i=1,\,\dots, r_{n}\}=\fz_{\fg}(x),
 \end{equation}
 Recall also that $\C[\fg]^{K}=\C[f_{n-1,1},\dots, f_{n-1, r_{n-1}}, f_{n,1},\dots, f_{n,r_{n}}]$ (see Proposition \ref{prop:flat}).  
 Let 
$$
\omega_{\fg//K}:=df_{n-1,1} \wedge \dots \wedge df_{n-1, r_{n-1}} \wedge df_{n, 1} \wedge 
\dots \wedge df_{n, r_n} \in \Omega^{r_{n-1} + r_n}(\fg).
$$

\begin{thm}\label{thm:Kostant}
$x\in(\fk_{\Delta}^{\perp})_{reg}\mbox{ if and only if } 
\omega_{\fg//K}(x)\neq 0,$ and if so, then $x\in\fg_{reg}$ and $x_{\fk}\in\fk_{reg}$.
\end{thm}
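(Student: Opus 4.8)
The plan is to derive this from Theorem~\ref{thm:regelts} and Lemma~\ref{lem:dimest} by identifying the non-vanishing locus of $\omega_{\fg//K}$ with the set of $K_\Delta$-regular elements of the coisotropy representation $\fk_\Delta^\perp \cong \fg$. First I would recall that $\C[\fg]^K = \C[f_{n-1,1},\dots,f_{n-1,r_{n-1}}, f_{n,1},\dots,f_{n,r_n}]$ is a polynomial ring (Proposition~\ref{prop:flat}(1)), so the quotient morphism $\fg \to \fg//K$ is, up to this choice of coordinates, exactly $\Phi_n$. The key input is a general fact about the coisotropy representation of a reductive spherical pair: since $\C[\fh^\perp]^H$ is polynomial, one has the analogue of Kostant's differential criterion, namely that the differentials of a set of free homogeneous generators of $\C[\fh^\perp]^H$ are linearly independent at $y \in \fh^\perp$ precisely when $y$ lies in the open set where all $H$-orbits have the expected (maximal) dimension and the quotient map is smooth there. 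This is part of the Panyushev/Knop theory cited in Section~\ref{ss:sphercoisotropy} (e.g.\ \cite{Pancoiso}, Corollary 5, together with \cite{Kn}); I would cite it rather than reprove it.

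Granting that, the equivalence $\omega_{\fg//K}(x) \ne 0 \iff x \in (\fk_\Delta^\perp)_{reg}$ is immediate: $\omega_{\fg//K}$ is precisely the wedge of differentials of the free generators of $\C[\fg]^K = \C[\fk_\Delta^\perp]^{K_\Delta}$, and its non-vanishing locus is the smooth locus of the quotient, which coincides with $(\fk_\Delta^\perp)_{reg}$ because the pair satisfies Equation~(\ref{eq:numerology}) by Lemma~\ref{lem:dimest}(1) (this equation is exactly what forces the generic $K$-orbit dimension to be $\dim\B_\fg + \dim\B_\fk$, matching the codimension count). One subtlety to address carefully: one must check that the smooth locus of $\fg \to \fg//K$ equals $(\fk_\Delta^\perp)_{reg}$ and not merely contains it; here I would invoke that $\Phi_n$ is flat (Proposition~\ref{prop:flat}(3)) together with the fact that for the coisotropy representation of a spherical pair the generic fibre is reduced and the singular locus of the map is $H$-stable of smaller dimension, so the non-smooth locus is cut out exactly by the vanishing of $\omega_{\fg//K}$.

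For the second half of the statement — that $x \in (\fk_\Delta^\perp)_{reg}$ implies $x \in \fg_{reg}$ and $x_\fk \in \fk_{reg}$ — I would argue as follows. The inclusion $(\fk_\Delta^\perp)_{reg} \subset \fg_{reg}$ is precisely Theorem~\ref{thm:regelts}(2) applied to the pair $(\tilde G, K_\Delta)$, whose hypothesis~(1) holds by Lemma~\ref{lem:dimest}(1); note that under the identification $\fk_\Delta^\perp \cong \fg$ the ambient regular set $\tilde\fg_{reg} = (\fg \oplus \fk)_{reg}$ restricted to the slice $\{(x, -x_\fk)\}$ corresponds to $\{x \in \fg : x \in \fg_{reg}\}$, since the second factor contributes nothing to the centralizer dimension count beyond rank $\fk$. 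For $x_\fk \in \fk_{reg}$: by Lemma~\ref{lem:dimest}(3), $x \in (\fk_\Delta^\perp)_{reg}$ gives $\fz_\fk(x_\fk) \cap \fz_\fg(x) = 0$, and combined with $\fz_\fg(x) \subset \fg$ having dimension $r_n$ and a dimension count on $\dim(\Ad(K)\cdot x) = \dim K$, one forces $\dim \fz_\fk(x_\fk) = r_{n-1}$; alternatively, and more cleanly, observe that $\Phi_n(x)$ determines $\chi_\fk(x_\fk) \in \fk//K$, and $\omega_{\fg//K}(x) \ne 0$ forces in particular $df_{n-1,1}(x) \wedge \cdots \wedge df_{n-1,r_{n-1}}(x) \ne 0$, whence the pullbacks of the basic $K$-invariants of $\fk$ have independent differentials at $x$; projecting via $\pi_\fk$ and using Kostant's criterion~(\ref{eq:regdiffs1}) on $\fk$ gives $x_\fk \in \fk_{reg}$.

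The main obstacle I anticipate is making the first paragraph's appeal to the spherical coisotropy theory fully rigorous — specifically, pinning down the precise statement from \cite{Pancoiso} or \cite{Kn} that identifies the non-vanishing locus of the Jacobian $\omega_{\fg//K}$ with $(\fh^\perp)_{reg}$, as opposed to merely asserting smoothness generically. If a clean citation is not available, the fallback is to prove it by hand: flatness of $\Phi_n$ plus equidimensionality of fibres (Proposition~\ref{prop:flat}(3)) shows the generic fibre has dimension $\dim\fg - r_n - r_{n-1} = \dim\fk$ (Equation~(\ref{eq:quotdim})), which by Lemma~\ref{lem:dimest} equals the dimension of a regular $K$-orbit; then a tangent-space computation shows $d\Phi_n$ is surjective at $x$ iff the image of $\ad(\fk)$ at $x$ has codimension $r_n + r_{n-1}$ iff $\dim\fz_\fk(x) = 0$ iff $x \in (\fk_\Delta^\perp)_{reg}$, and surjectivity of $d\Phi_n$ at $x$ is exactly $\omega_{\fg//K}(x) \ne 0$ since the target coordinates are the $f_{n-1,j}, f_{n,j}$.
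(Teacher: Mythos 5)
Your proposal takes a genuinely different and more circuitous route than the paper, and as written it contains both a circularity and a gap.

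The main issue is that the general result you want to cite in your first paragraph --- that for a reductive spherical pair the non-vanishing locus of the Jacobian of free generators of $\C[\fh^\perp]^H$ coincides with $(\fh^\perp)_{reg}$ --- is, for all practical purposes, the theorem being proved.  The paper explicitly acknowledges this: right after the proof it remarks that Theorem~\ref{thm:Kostant} ``can be obtained as a special case of a more general result proven by Knop \cite{Knopsmooth}'' and states that it includes a proof ``because of its simplicity.''  Citing Knop is a legitimate way to establish the statement, but it is not an independent proof, which is what the exercise calls for.

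Your fallback tangent-space argument has a genuine gap.  You claim ``$d\Phi_n$ is surjective at $x$ iff the image of $\ad(\fk)$ at $x$ has codimension $r_n+r_{n-1}$ iff $\dim\fz_\fk(x)=0$.''  The inclusion $[\fk,x]\subseteq\ker d\Phi_n(x)$ (which holds because the $f_{i,j}$ are $K$-invariant) gives $\dim\ker d\Phi_n(x)\ge\dim[\fk,x]$, and the rank of $d\Phi_n(x)$ is at most $r_n+r_{n-1}$ gives $\dim\ker d\Phi_n(x)\ge\dim\fk$.  Both inequalities bound $\dim\ker d\Phi_n(x)$ from \emph{below}, so knowing $\dim[\fk,x]=\dim\fk$ does not force $\ker d\Phi_n(x)=[\fk,x]$.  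To close this you would need to show the scheme-theoretic fibre is reduced at $x$, which is precisely the kind of control you gesture at (``the generic fibre is reduced and the singular locus of the map is $H$-stable of smaller dimension'') without supplying.  The paper sidesteps this entirely: it translates $\omega_{\fg//K}(x)\ne 0$, via Kostant's classical criterion~(\ref{eq:regdiffs1}) and the identification $\mathrm{span}\{df_{n,j}(x)\}=\fz_\fg(x)$ of Equation~(\ref{eq:centralizer}), directly into the three conditions $x\in\fg_{reg}$, $x_\fk\in\fk_{reg}$, and $\fz_\fk(x_\fk)\cap\fz_\fg(x)=0$, then compares with Equation~(\ref{eq:Kreg}).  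No flatness or reducedness of fibres is needed.

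There is also a smaller misreading of Theorem~\ref{thm:regelts}(2).  Applied to the pair $(\tilde G, K_\Delta)$ it gives $(x,-x_\fk)\in(\fg\oplus\fk)_{reg}$, and since $\fz_{\fg\oplus\fk}(x,-x_\fk)=\fz_\fg(x)\oplus\fz_\fk(x_\fk)$, regularity in the product is equivalent to \emph{both} $x\in\fg_{reg}$ and $x_\fk\in\fk_{reg}$ simultaneously.  You do not need the separate detour through $df_{n-1,1}(x)\wedge\cdots\wedge df_{n-1,r_{n-1}}(x)\ne 0$ to recover $x_\fk\in\fk_{reg}$; it falls out of the same application.  Your alternative argument for that one step is correct but unnecessary, and the claim that ``the second factor contributes nothing to the centralizer dimension count beyond rank $\fk$'' obscures the point that both factors impose regularity constraints.
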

\begin{proof}
We first suppose that $\omega_{\fg//K}(x)\neq 0$.  By Equation (\ref{eq:regdiffs1}), it 
follows that $x\in\fg_{reg}$ and $x_{\fk}\in\fk_{reg}$.  
Equation (\ref{eq:centralizer}) then implies that 
$\fz_{\fk}(x_{\fk})\cap\fz_{\fg}(x)=0$, so $x\in (\fk_{\Delta}^{\perp})_{reg}$ by Equation (\ref{eq:Kreg}).

Conversely, suppose $x\in(\fk_{\Delta}^{\perp})_{reg}$.  Then by Theorem \ref{thm:regelts}, Equation (\ref{e:productperp}), and part (1) of Lemma \ref{lem:dimest}, $(x, -x_{\fk})\in\tilde{\fg}_{reg}$.  
Thus, both $x\in\fg_{reg}$ and $x_{\fk}\in\fk_{reg}$.  Hence by Equation (\ref{eq:regdiffs1}), 
\begin{equation}\label{eq:twoindep}
df_{n-1, 1}(x_{\fk})\wedge\dots\wedge df_{n-1, r_{n-1}}(x_{\fk})\neq 0\mbox{ and } df_{n,1}(x)\wedge\dots\wedge df_{n, r_{n}}(x)\neq 0.
\end{equation}
  Since $x\in(\fk_{\Delta}^{\perp})_{reg}$, $\fz_{\fk}(x_{\fk})\cap\fz_{\fg}(x)=0$ by Equation (\ref{eq:Kreg}).  
It now follows from (\ref{eq:twoindep}) and (\ref{eq:centralizer}) that 
$\omega_{\fg//K}(x)\neq 0$. 
\end{proof}
Theorem \ref{thm:Kostant} can be obtained as a special case of a more general result 
proven by Knop, \cite{Knopsmooth}.  We include our proof here because of its simplicity.  Theorem \ref{thm:Kostant} has an immediate corollary which 
is of interest in linear algebra.
\begin{cor}\label{c:linearalgebra}
Let $x\in\fg$ and suppose that $\fz_{\fk}(x_{\fk})\cap\fz_{\fg}(x)=0$.  
Then $x\in\fg_{reg}$ and $x_{\fk}\in\fk_{reg}$.
\end{cor}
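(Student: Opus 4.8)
The plan is to deduce the corollary directly from Theorem \ref{thm:Kostant} together with Lemma \ref{lem:dimest}. The hypothesis $\fz_{\fk}(x_{\fk})\cap\fz_{\fg}(x)=0$ is precisely the condition appearing in Equation (\ref{eq:Kreg}), so by part (3) of Lemma \ref{lem:dimest} it says exactly that $x \in (\fk_{\Delta}^{\perp})_{reg}$ under the identification $\fk_{\Delta}^{\perp}\cong\fg$ of Equation (\ref{e:productperp}). Once we know $x \in (\fk_{\Delta}^{\perp})_{reg}$, Theorem \ref{thm:Kostant} immediately yields $\omega_{\fg//K}(x)\neq 0$ and, more to the point, the final clause of that theorem gives $x\in\fg_{reg}$ and $x_{\fk}\in\fk_{reg}$. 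So the corollary is essentially a restatement of one implication of Theorem \ref{thm:Kostant} combined with the translation provided by Lemma \ref{lem:dimest}(3), and the proof is a two-line chain of references.

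The only point requiring a moment's care is making sure the identification in Equation (\ref{e:productperp}) is being used consistently: there $\fk_{\Delta}^{\perp}=\{(x,-x_{\fk}): x\in\fg\}$ is identified with $\fg$ via $(x,-x_{\fk})\mapsto x$, and this is a $K\cong K_{\Delta}$-module isomorphism, so ``$x \in (\fk_{\Delta}^{\perp})_{reg}$'' unambiguously means that the $K$-orbit of $x$ in $\fg$ has maximal dimension. I would simply write: ``By Equation (\ref{eq:Kreg}) of Lemma \ref{lem:dimest}, the hypothesis $\fz_{\fk}(x_{\fk})\cap\fz_{\fg}(x)=0$ is equivalent to $x\in(\fk_{\Delta}^{\perp})_{reg}$. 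By Theorem \ref{thm:Kostant}, it follows that $x\in\fg_{reg}$ and $x_{\fk}\in\fk_{reg}$.'' This completes the proof.

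I do not anticipate any real obstacle here, since all the substantive work — the dimension count establishing Equation (\ref{eq:numerology}) for the pair $(\tilde G, K_\Delta)$, the application of Panyushev's results on coisotropy representations, and Kostant's regularity criterion (\ref{eq:regdiffs1}) — has already been carried out in Lemma \ref{lem:dimest} and Theorem \ref{thm:Kostant}. The corollary is just the ``linear algebra facing'' packaging of Theorem \ref{thm:Kostant}: it strips away the language of coisotropy representations and states the consequence purely in terms of centralizers and projections, which is the form in which it will be applied later (e.g.\ in simplifying the Kostant--Wallach strong regularity criterion).
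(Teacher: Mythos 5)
Your proof is correct and matches the paper's argument exactly: the paper also deduces the corollary by invoking Equation (\ref{eq:Kreg}) to identify the hypothesis with $x\in(\fk_{\Delta}^{\perp})_{reg}$ and then applying Theorem \ref{thm:Kostant}. No gap, nothing to add.
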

\begin{proof}
This follows by Equation (\ref{eq:Kreg}) and Theorem \ref{thm:Kostant}.
\end{proof}

We observed in the proof of Lemma \ref{lem:dimest} that for $x\in\fg$, $\fz_{\fk}(x)=\fz_{\fk}(x_{\fk}) \cap \fz_{\fg}(x)$.
 Elements of $(\fk_{\Delta}^{\perp})_{reg}$  can be used
 to inductively construct strongly regular elements of $\fg$, so we give them a special name. 
 \begin{dfn}\label{dfn:nsreg}
 An element $x\in\fg$ such that $\fz_{\fk}(x)=0$ is said 
 to be $n$-strongly regular.  We denote the set of $n$-strongly regular elements by $\fg_{nsreg}$.  
 \end{dfn}

We can use Corollary \ref{c:linearalgebra} to simplify the characterization of
strongly regular elements originally given by Kostant and Wallach for the general linear case in Theorem 2.14 of \cite{KW1} 
and generalized by the first author to the orthogonal case in Proposition 2.11 of \cite{Col2}.

\begin{prop}\label{prop:fullsreg}
An element $x\in\fg$ is strongly regular if and only if 
$$
\fz_{\fg_{i}}(x_{i})\cap\fz_{\fg_{i+1}}(x_{i+1})=0 \mbox { for } i=2,\dots, n-1.
$$
\end{prop}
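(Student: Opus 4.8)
The plan is to deduce this simplified criterion from the original Kostant--Wallach-type characterization (Proposition 2.11 of \cite{Col2}) by showing that the ``regularity of each $x_i$'' conditions are automatically implied by the pairwise vanishing of the consecutive centralizer intersections. First I would recall that the original characterization states $x$ is strongly regular if and only if (i) $x_i \in (\fg_i)_{reg}$ for all $i = 2, \dots, n$, and (ii) $\fz_{\fg_i}(x_i) \cap \fz_{\fg_{i+1}}(x_{i+1}) = 0$ for $i = 2, \dots, n-1$. One direction is then immediate: if $x$ is strongly regular, condition (ii) holds, which is exactly the stated condition.

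For the converse, suppose $\fz_{\fg_i}(x_i) \cap \fz_{\fg_{i+1}}(x_{i+1}) = 0$ for all $i = 2, \dots, n-1$; by the original characterization it suffices to prove that $x_i \in (\fg_i)_{reg}$ for each $i$. The key observation is that the pair $(\fg_{i+1}, \fg_i)$ is exactly the pair $(\fso(i+1), \fso(i))$ to which Corollary \ref{c:linearalgebra} applies: the element $x_{i+1} \in \fg_{i+1}$ has $\fk$-part (with $\fk = \fg_i$) equal to $(x_{i+1})_{\fg_i} = x_i$, since the projections $\pi_j$ are defined by iterating the single-step symmetric-pair projection. Thus the hypothesis $\fz_{\fg_i}(x_i) \cap \fz_{\fg_{i+1}}(x_{i+1}) = 0$ is precisely the hypothesis of Corollary \ref{c:linearalgebra} applied to $x_{i+1} \in \fso(i+1)$, and its conclusion gives $x_{i+1} \in (\fg_{i+1})_{reg}$ and $x_i \in (\fg_i)_{reg}$. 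Running this for $i = 2, \dots, n-1$ yields regularity of $x_i$ for all $i = 2, \dots, n$, completing the converse.

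The one point requiring a little care is the base and the indexing: Corollary \ref{c:linearalgebra} gives both $x_{i+1}$ regular in $\fg_{i+1}$ and $x_i$ regular in $\fg_i$ at each step, so in fact a single pass over $i = 2, \dots, n-1$ suffices and there is no need for a separate inductive argument --- each $x_i$ with $2 \le i \le n-1$ appears as the ``$\fk$-part'' for the step $i$ and as the ``$\fg$-part'' for the step $i-1$, and $x_n$ is covered as the $\fg$-part of step $n-1$. I would also note explicitly that for $i=2$ the algebra $\fg_2 = \fso(2)$ is abelian, so every element is regular there, which is consistent and requires nothing extra.

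The main obstacle --- really the only substantive point --- is verifying that $(x_{i+1})_{\fg_i} = x_i$, i.e.\ that the ``$\fk$-component'' appearing in Corollary \ref{c:linearalgebra} for the pair $(\fg_{i+1}, \fg_i)$ agrees with the GZ projection $x_i = \pi_i(x)$ restricted appropriately. This is really a bookkeeping check: by construction $\fg_i = \fg_{i+1}^{\theta_{i+1}}$ and $x_i$ is obtained by projecting $x$ off $\fg_i^{\perp}$ through the chain, so projecting $x_{i+1}$ off $\fg_i^{\perp}$ (inside $\fg_{i+1}$) reproduces $x_i$; I would spell this out using the compatibility of the trace form with the chain of inclusions in (\ref{eq:GZchain}). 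Once that identification is in hand, the proposition follows by direct appeal to Corollary \ref{c:linearalgebra} and Proposition 2.11 of \cite{Col2}, with no further computation.
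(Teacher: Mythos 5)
Your proof is correct and follows essentially the same route as the paper: cite Proposition 2.11 of \cite{Col2} for the two-condition characterization, then invoke Corollary \ref{c:linearalgebra} applied to each consecutive pair $(\fg_{i+1},\fg_i)$ to deduce that condition (ii) alone forces each $x_i$ to be regular. The extra care you take — identifying $(x_{i+1})_{\fg_i}=x_i$ explicitly and noting that $\fg_2 = \fso(2)$ is abelian so regularity of $x_2$ is automatic — is harmless and fills in bookkeeping that the paper leaves implicit.
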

 \begin{proof}
 By Proposition 2.11 of \cite{Col2},
an element $x\in\fg$ is strongly regular if and only if the following two conditions hold:
\begin{equation*}
\begin{split}
&(1)\; x_{i}\in\fg_{i} \mbox{ are regular for all } i=2,\dots, n. \\
&(2)\; \fz_{\fg_{i}}(x_{i})\cap\fz_{\fg_{i+1}}(x_{i+1})=0 \mbox { for } i=2,\dots, n-1.
\end{split}
\end{equation*}
 It follows from Corollary \ref{c:linearalgebra} that if $x_{i+1}\in\fg_{i+1}$ 
satisfies $\fz_{\fg_{i}}(x_{i})\cap \fz_{\fg_{i+1}}(x_{i+1})=0$,  then $x_{i+1}\in\fg_{i+1}$ is regular.   
\end{proof}

\begin{rem}\label{r:generallin2}
We note that arguments given above also apply to the general linear case.  Indeed, we observed in Remark \ref{r:appendix} 
that if we let $G=GL(n)$, $K=GL(n-1)$, then the pair $(\tilde{G}, K_{\Delta})$ is spherical and satisfies Equation (\ref{eq:numerology}).  
Further, Lemma \ref{lem:dimest} also holds 
for this spherical pair, and we obtain Theorem \ref{thm:Kostant} by the same proof.  Corollary \ref{c:linearalgebra} follows, 
and we obtain a simplification of Kostant and Wallach's characterization of strongly regular elements in Theorem 2.14 of \cite{KW1}.

In \cite{CEeigen}, we defined the set of $n$-strongly regular elements 
 for $\fg=\fgl(n)$ to be the set of elements $x\in\fg$ for which $\omega_{\fg//K}(x)\neq 0$.  
 It follows from Theorem \ref{thm:Kostant} and Equation (\ref{eq:Kreg})
  that our definition in Definition \ref{dfn:nsreg} is consistent with the previous one and $\fg_{nsreg}
\cong (\fk_{\Delta}^{\perp})_{reg}.$

\end{rem}

The following technical result will be useful in the next section in our description 
of the geometry of generic fibres of the KW map (see Theorem \ref{thm:genericfibre}). 
\begin{lem}\label{l:centralizers}
For $x\in\fg_{nsreg}$, the group $Z_{K}(x)=Z_{K}(x_{\fk})\cap Z_{G}(x)=\{ e \}$ is the trivial group.
\end{lem}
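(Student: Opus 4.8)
The plan is to reduce the lemma to the statement that a certain finite group is trivial, and then to use the description of $\fp = \fg^{-\theta}$ as a $K$-module. The equality $Z_{K}(x) = Z_{K}(x_{\fk}) \cap Z_{G}(x)$ is elementary and I would dispose of it first: since $K = (G^{\theta})^{0}$, every $k \in K$ satisfies $\theta(k) = k$, so $\Ad(k)$ commutes with $\theta$ and hence preserves both $\fk = \fg^{\theta}$ and $\fp = \fg^{-\theta}$. Therefore $\Ad(k)x = x$ if and only if $\Ad(k)x_{\fk} = x_{\fk}$ and $\Ad(k)x_{\fp} = x_{\fp}$, and since $x_{\fp} = x - x_{\fk}$ this is equivalent to $k \in Z_{K}(x_{\fk}) \cap Z_{G}(x)$.

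Next I would observe that $\fz_{\fk}(x)$ is the Lie algebra of $Z_{K}(x)$, and it vanishes because $x \in \fg_{nsreg}$; hence $Z_{K}(x)$ is finite, so every $k \in Z_{K}(x)$ has finite order and is a semisimple element of $G$. It then remains to rule out the existence of a nontrivial such $k$. The input I would use here is the classical fact that $\fp$, as a $K$-module, is the standard $(n-1)$-dimensional representation $V = \C^{n-1}$ of $K = SO(n-1)$ (equivalently $\fso(n) = \fso(n-1) \oplus \C^{n-1}$), where $V$ carries the nondegenerate symmetric form obtained by restricting the trace form; this can be checked directly from the realization in Section~\ref{s:realization}. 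Under this identification, for $Y \in \fk$ the operator $\ad(Y)$ on $\fp$ is multiplication of vectors in $V$ by the matrix $Y \in \fso(n-1)$, and $\Ad(k)|_{\fp}$ is multiplication by $k \in SO(n-1)$.

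So suppose $e \neq k \in Z_{K}(x)$. Being semisimple, $k$ is diagonalizable on $V$; let $W = \ker(k - \id)$ and let $W^{\perp}$ be its orthogonal complement, i.e. the sum of the eigenspaces of $k$ for eigenvalues $\neq 1$. I would first record that both $W$ and $W^{\perp}$ are nondegenerate (the form pairs the $\mu$- and $\mu^{-1}$-eigenspaces of $k$), and that $\dim W^{\perp} \geq 2$ — because eigenvalues $\mu \neq \pm 1$ come in pairs $\{\mu, \mu^{-1}\}$ and, since $\det k = 1$, the $(-1)$-eigenspace is even-dimensional, so a nonzero $W^{\perp}$ cannot be one-dimensional. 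Since $\Ad(k)$ fixes $x$ and preserves $\fp$, the vector $x_{\fp}$ lies in $W$; and since $\Ad(k)$ fixes $x_{\fk}$, the matrix $x_{\fk}$ commutes with $k$, hence preserves $W$ and $W^{\perp}$, so $x_{\fk}|_{W^{\perp}}$ is a skew operator for the restricted form and lies in $\fso(W^{\perp}) \cong \fso(m)$ with $m = \dim W^{\perp} \geq 2$. As $\fso(m)$ has positive rank, the centralizer of $x_{\fk}|_{W^{\perp}}$ in $\fso(W^{\perp})$ is nonzero; picking a nonzero element there and extending it by zero on $W$ produces $Y \in \fso(n-1) = \fk$ with $Y \neq 0$. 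A short check then gives $[Y, x_{\fk}] = 0$ (it vanishes on $W$, where $Y = 0$ and $x_{\fk}$ is invariant, and on $W^{\perp}$, where it is a bracket inside the chosen centralizer) and $[Y, x_{\fp}] = 0$ (which under the identification is $Y x_{\fp} = 0$, true since $x_{\fp} \in W$ and $Y|_{W} = 0$). Hence $[Y,x] = 0$, so $0 \neq Y \in \fz_{\fk}(x)$, contradicting $x \in \fg_{nsreg}$; therefore $k = e$, and $Z_{K}(x) = \{e\}$.

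I expect the main obstacle to be the third step, and within it the delicate point is the bound $\dim W^{\perp} \geq 2$: it is here that one must use that $k$ lies in the \emph{special} orthogonal group, since a solitary $(-1)$-eigenvalue would leave $W^{\perp}$ one-dimensional, breaking the construction of $Y$. The remaining items — that $\fz_{\fk}(x)$ is the Lie algebra of $Z_{K}(x)$, that $x_{\fk}$ stabilizes the eigenspaces of $k$, and that the extension-by-zero operator $Y$ is again skew and centralizes $x$ — are routine.
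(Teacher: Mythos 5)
Your argument is correct, and it takes a genuinely different (and arguably more streamlined) route than the paper's. The paper first passes through the Jordan decomposition $x_{\fk}=s+n$, identifies $Z_K(x)$ inside the centre $Z$ of the Levi subgroup $L=Z_K(s)$ (using regularity of $x_{\fk}$ to conclude $Z_L(n)=Z\cdot U$), then classifies the possible Levi subgroups of $SO(n-1)$ and does a case analysis on whether $Z$ is connected (culminating in the delicate argument that the extra component $\epsilon=-\Id_{SO(r)}$ forces $x_{\fk}$ to be non-regular). You instead fix a hypothetical nontrivial $k\in Z_K(x)$, decompose the standard module $V\cong\fp$ into $k$-eigenspaces $W\oplus W^{\perp}$, and build a nonzero $Y\in\fz_{\fk}(x)$ supported on $W^{\perp}$; the hypothesis $k\in SO$, as you note, enters exactly through $\det k=1$, which makes the $(-1)$-eigenspace even-dimensional and hence $\dim W^{\perp}\geq 2$ — this plays the role of the paper's case analysis on the component group of $Z$. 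Your version is more self-contained: it avoids the classification of Levi subgroups of $SO(n-1)$, does not need the regularity of $x_{\fk}$ (you only use the trivial rank bound $\dim\fz_{\fso(m)}(\cdot)\geq 1$ for $m\geq 2$), and handles the type $B$/type $D$ distinction and the low-rank cases uniformly. The one thing your write-up leaves tacit is that $W^{\perp}\neq 0$ because $\fp$ is a \emph{faithful} $K$-module, so $k\neq e$ forces $k|_{\fp}\neq\Id$; that is worth a half-sentence but is certainly true for the standard representation of $SO(n-1)$.
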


%MARK: YOU WILL HAVE TO FIX THE NOTATION IN THIS PROOF AT SOME POINT.

\begin{proof}
Since $x\in \fg_{nsreg}$, $\mbox{Lie}(Z_{K}(x))=\fz_{\fk}(x)=0,$
so that $Z_K(x)$ is a finite group.  
Decompose $x=x_{\fk} + x_{\fp}$ with respect to the Cartan decomposition, so that $Z_K(x)=Z_K(x_{\fk}) \cap Z_K(x_{\fp})$.  Consider the Jordan decomposition of $x_{\fk}$ in $\fk$, $x_{\fk}=s+n$, with $s$ semisimple and $n$ nilpotent.
Consider the Levi subgroup $L:= Z_K(s)$ of $K$, let
$\fl=\mbox{Lie}(L)$, and let $Z$ be the centre of $L$.  We claim
that 
\begin{equation}\label{eq:Kcentralizer}
Z_K(x)=Z_{x_{\fp}}:=\{ z\in Z : \Ad(z)\cdot x_{\fp}=x_{\fp} \}.
\end{equation}
Indeed, if $z\in Z_{x_{\fp}}$, then since $s, n \in \fl$ and $z\in Z$, it follows
that $z\in Z_K(x_{\fk})$, and hence $z\in Z_K(x_{\fk}) \cap Z_K(x_{\fp})=Z_K(x)$.
Conversely, by standard properties
of the Jordan decomposition, $Z_K(x_{\fk})=Z_K(s) \cap Z_K(n) = Z_L(n)$.
Since $x\in \fg_{nsreg}$, $x_{\fk}\in\fk_{reg}$ by Corollary \ref{c:linearalgebra}, so $n$
is regular nilpotent in $\fl$, and hence 
$Z_L(n)=Z \cdot U$, where $U$ is a unipotent subgroup of $L$.
It follows that the finite subgroup $Z_K(x)$ of $Z_L(n)$ is in $Z$, and hence
that $Z_K(x)\subset Z_{x_{\fp}}$, and this establishes (\ref{eq:Kcentralizer}).
For later use, we let $Z^{0}$ denote the identity component of $Z$.

From the classification of Levi subgroups of $K=SO(n-1)$, it follows
that up to $K$-conjugacy, 
\begin{equation}\label{eq:Levis}
L=GL(s_1) \times \dots \times GL(s_d) \times
SO(r),
\end{equation}
 where $r\equiv n-1 \pmod{2}$, and $r\not= 2$, or 
 \begin{equation}\label{eq:Levis2}
 L=GL(m_1) \times \dots \times GL(m_d).
 \end{equation}
 If $n-1$ is even and $L$ is as in (\ref{eq:Levis}), then the centre $Z$ of $L$ is
$GL(1)^d \times <\epsilon>$, where $\epsilon$ is the negative of
the identity in $SO(r)$.  Otherwise,  $Z=Z^{0}=GL(1)^d$.    
 Recall also that the $K$-module $\fp \cong V$, where $V$ is the standard representation
of $K$.   It follows that the $Z^{0}$-weights of $\fp$ consist of
$\Gamma = \{ \pm \mu_i : i=1, \dots, d \}$, where $\mu_1, \dots, \mu_d$ is a basis of
the character group $X^*(Z^{0})$ of $Z^{0}$, along with the trivial character when $L$ is given by (\ref{eq:Levis}).    
    Let $x_{\fp}  = \displaystyle\bigoplus_{\lambda\in \Gamma} x_{\lambda}$
be the decomposition of $x_{\fp}$ into $Z^{0}$-weight vectors, and let
$\Gamma_0$ consist of the $\mu_i$ such $x_{\mu_i}$ or $x_{-\mu_i}$ is
nonzero.  Let $Z_{\Gamma_0}^{0}:=Z^{0} \cap Z_{x_{\fp}}$
and note that 
$$Z_{\Gamma_0}^{0}= \{ z\in Z^{0} : \mu_i(z)=1 
\mbox{ for all} \, \mu_i \in \Gamma_0 \}.$$    If $|\Gamma_0| < d$, then
$\dim(Z_{\Gamma_0}^{0}) \ge 1$, which contradicts the finiteness of $Z_K(x)$
 (\ref{eq:Kcentralizer}).  Hence $|\Gamma_0|=d$, so that $\Gamma_0$ is a basis of $X^*(Z^{0})$, and it follows that
 $Z_{\Gamma_0}^{0}=\{ e \}$.  When  $Z=Z^{0}$, we have 
 $$\{e\}=Z_{\Gamma_0}^{0}=Z_{x_{\fp}}=Z_{K}(x)$$
  by (\ref{eq:Kcentralizer}), and the lemma is proven.   If, on the other hand, $Z\not= Z^{0}$, then 
  by our remarks above, $n-1$ must be even and $L$ is given by (\ref{eq:Levis}).  
  In this case, we can decompose $\fp = \oplus_{i=1}^d \fp_{\pm \mu_i} \oplus \fp_0$,  where $\fp_{\mu_{i}}$ is the standard representation of $GL(s_{i})$, $\fp_{-\mu_{i}}$ is its contragradient, and $\fp_{0}$ is the standard representation of $SO(r)$.  Thus, $\epsilon$ acts
as the negative of the identity on $\fp_{0}$ while $Z^{0}$ acts trivially on $\fp_0$, and $\eps$ acts trivially on $\oplus_{i=1}^d \fp_{\pm \mu_i}$.
If $g\in Z_K(x) \setminus \{ e \}$, then since $Z_{\Gamma_{0}}^{0}=\{e\}$, we would have $g=z\cdot \epsilon$ with $z\in Z^{0}$.
 If this were the case, then $x_{\fp_0}=0$.   It follows that if 
$x_{\fk}=\oplus_{i=1}^d x_i \oplus y$, with $x_i \in \fgl(s_i)$ and
$y\in \fso(r)$, is the decomposition of $x_{\fk}$ in $\fl$,  then $y\in \fz_{\fk}(x_{\fk}) \cap \fz_{\fg} (x)=0$, so
$y=0.$  But then $x_{\fk}$ is not regular, which is a contradiction.
Thus, $Z_K(x)= \{ e \}$.
\end{proof}

\subsection{Generic Elements for GZ integrable systems} \label{ss:generic}
We use Proposition \ref{prop:fullsreg} to construct a new set $\fg_{\Theta}\subset\fg$ of strongly regular elements.    We then 
 use the set $\fg_{\Theta}$ to show that the GZ system is completely integrable on regular adjoint orbits of $\fg$. 

We begin by studying the relation between the spectra of $x$ and $x_{\fk}$.  
For $x\in\fg$, recall that $\sigma(x)$ denotes the spectrum of $x$ (see Notation \ref{nota:spectrum}).  We show the Zariski open subset:
$$
\fg(0)=\{x\in\fg: \sigma(x_{\fk})\cap\sigma(x)= \emptyset\}. 
$$
consists of $n$-strongly regular elements in the sense of Definition \ref{dfn:nsreg}.  We require a result from linear algebra.  
\begin{lem}\label{lem:linearalgebra} 
Let $V=V_{1}\oplus V_{2}$ be a direct sum decomposition of a 
finite dimensional complex vector space.  Let $X, Y \in \mbox{End}(V)$ with $Y\not= 0$.   Suppose that $Y(V_1) \subset V_1$, $Y(V_2)=0$, and $[X,Y]=0$.
Define $X_1:= \pi_{V_1} \circ X|_{V_{1}} \in \mbox{End}(V_{1})$ where $\pi_{V_1}$ is
projection off $V_2$.  Let $V(\lambda)$ be the generalized eigenspace of $X$ of eigenvalue $\lambda$ and let 
$V_{1}(\lambda)$ be the generalized eigenspace of $X_{1}$ of eigenvalue $\lambda$.  Then $Y(V(\lambda))\subset V_{1}(\lambda)$.
\end{lem}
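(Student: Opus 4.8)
The plan is to reduce everything to the single observation that $W:=\Im(Y)$ is an $X$-invariant subspace of $V_1$ on which the operators $X$ and $X_1$ agree; granted this, the claim about generalized eigenspaces is immediate from $[X,Y]=0$.

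First I would record the structural facts. Since $Y(V_2)=0$ and $Y(V_1)\subset V_1$, we get $Y(V)=Y(V_1)\subset V_1$, so $W\subset V_1$. For any $v\in V$, commutativity gives $X(Yv)=Y(Xv)\in\Im(Y)=W$, so $W$ is $X$-invariant; moreover $X(Yv)\in V_1$ means the projection $\pi_{V_1}$ off $V_2$ acts as the identity on it, so $X_1(Yv)=\pi_{V_1}\bigl(X(Yv)\bigr)=X(Yv)$. Hence $X_1|_W=X|_W$; in particular $W$ is $X_1$-invariant and $X_1^{k}|_W=X^{k}|_W$ for every $k\ge 0$.

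Then I would finish the proof. Fix $v\in V(\lambda)$ and choose $N$ with $(X-\lambda I)^{N}v=0$. Applying $Y$ and using that $Y$ commutes with the polynomial $(X-\lambda I)^{N}$ (since $[X,Y]=0$), we obtain $(X-\lambda I)^{N}(Yv)=Y\bigl((X-\lambda I)^{N}v\bigr)=0$. Because $Yv\in W$ and $X|_W=X_1|_W$, this reads $(X_1-\lambda I)^{N}(Yv)=0$, so $Yv\in V_1(\lambda)$. As $v\in V(\lambda)$ was arbitrary, $Y(V(\lambda))\subset V_1(\lambda)$.

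I do not expect a genuine obstacle here: the argument is purely formal once one is careful about the definition of $X_1$ via the projection $\pi_{V_1}$. The only point needing attention is that this projection is harmless precisely on the subspace $W=\Im(Y)$ — which is exactly what $X$-invariance of $W$ together with $W\subset V_1$ provides — and everything else is manipulation with the commutation relation.
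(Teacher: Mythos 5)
Your argument is correct and is essentially the same as the paper's: both hinge on the observation that $\Im(Y)$ lies in $V_1$, is $X$-invariant, and is a subspace on which $X$ and $X_1$ agree. You package this fact as a standing structural claim and then apply it once; the paper instead runs an explicit induction on $k$ to show $(X-\lambda I)^k Yv = (X_1-\lambda I)^k Yv$, but the underlying idea is the same.
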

\begin{proof}
Let $v\in V(\lambda)$.  Then there is $j\in\mathbb{Z}^{\geq 0}$ such that 
$(X-\lambda {\Id}_{V})^{j} v=0$ where ${\Id}_{V}$ is the identity operator on $V$.  
For $k\ge 0$, let $v_k=(X-\lambda {\Id}_{V})^{k} Yv$.  Since $[X,Y]=0$, then
$v_k \in \mbox{Im}(Y) \subset V_{1}$.   We show that 
$v_k=(X_1-\lambda {\Id}_{V_1})^{k} Yv$ by induction on $k$.  The case $k=0$
is clear, and note that if $u\in V_1$ and $(X-\lambda {\Id}_{V})u \in V_1$, then
$(X-\lambda {\Id}_{V})u = (X_1 - \lambda {\Id}_{V_1})u$.  The inductive step
follows easily from this observation since each $v_k \in V_{1}$.  By the choice
of $j$,   
\begin{equation*}
(X_1 - \lambda {\Id}_{V_1})^jYv = (X-\lambda {\Id}_{V})^{j}Yv = Y(X-\lambda {\Id}_{V})^{j}v = 0, 
\end{equation*}
and this establishes the lemma.
\end{proof}

%Using Proposition \ref{p:submatrix}, we can prove: %a fundamental result regarding the structure of 

\begin{thm}\label{thm:gzero}
Let $x\in\fg(0)$.  Then $x\in\fg_{nsreg}$.    
\end{thm}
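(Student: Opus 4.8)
The plan is to prove the equivalent assertion $\fz_{\fk}(x)=0$, which by Definition \ref{dfn:nsreg} is precisely the statement $x\in\fg_{nsreg}$. Recall from the proof of Lemma \ref{lem:dimest} that $\fz_{\fk}(x)=\fz_{\fk}(x_{\fk})\cap\fz_{\fg}(x)$, so it suffices to fix $Y\in\fz_{\fk}(x_{\fk})\cap\fz_{\fg}(x)$ and show $Y=0$; we may assume $Y\neq 0$ and seek a contradiction.

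First I would set up the relevant linear algebra, uniformly in the two cases of Section \ref{ss:symmetricreal}. In both the type $B$ and the type $D$ realizations, $\fk=\fso(n-1)$ is realized as $\fso(W)$ for a nondegenerate hyperplane $W\subset\C^{n}$ (namely $W=e_{0}^{\perp}$ in type $B$ and $W=(e_{l}-e_{-l})^{\perp}$ in type $D$), extended by zero on the nondegenerate line $V_{2}:=W^{\perp}$. Writing $\pi_{W}\colon\C^{n}\to W$ for the projection off $V_{2}$, one checks, using the $\beta$-skewness of $x$ and $W\perp V_{2}$, that $\pi_{W}\circ x|_{W}$ already lies in $\fso(W)$ and equals the trace-form projection $x_{\fk}$ of $x$ onto $\fk$. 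Since $Y\in\fk$, we have $Y(W)\subseteq W$, $Y(V_{2})=0$ and $[Y,x]=0$, so Lemma \ref{lem:linearalgebra} applies with $V=\C^{n}$, $V_{1}=W$, $X=x$, $X_{1}=x_{\fk}$: for every $\lambda\in\C$ we obtain $Y(V(\lambda))\subseteq V_{1}(\lambda)$, where $V(\lambda)$ and $V_{1}(\lambda)$ denote the generalized $\lambda$-eigenspaces of $x$ on $\C^{n}$ and of $x_{\fk}$ on $W$, respectively. Moreover $Y$ commutes with $x$, hence preserves each $V(\lambda)$, so in fact $Y(V(\lambda))\subseteq V(\lambda)\cap V_{1}(\lambda)$. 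As $\C^{n}=\bigoplus_{\lambda}V(\lambda)$, it is enough to show $Y(V(\lambda))=0$ for every $\lambda$. For $\lambda\neq 0$ this is immediate: if $V(\lambda)\neq 0$, then $\lambda$ is a nonzero eigenvalue of $x$, so $\lambda\in\sigma(x)$; since the convention of Notation \ref{nota:spectrum} only affects the value $0$, the hypothesis $\sigma(x_{\fk})\cap\sigma(x)=\emptyset$ forces $\lambda$ not to be an eigenvalue of $x_{\fk}$, whence $V_{1}(\lambda)=0$ and $Y(V(\lambda))\subseteq V_{1}(\lambda)=0$.

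The crux is the value $\lambda=0$, where Lemma \ref{lem:linearalgebra} only gives $Y(V(0))\subseteq V(0)\cap V_{1}(0)$. Here I would use two elementary facts about a $\beta$-skew operator on a nondegenerate symmetric bilinear space $(U,\gamma)$: (a) its generalized $0$-eigenspace is $\gamma$-nondegenerate, since generalized eigenspaces attached to non-opposite eigenvalues are $\gamma$-orthogonal; and (b) a nonzero element of $\fso(U,\gamma)$ has rank at least $2$ (a rank-one skew operator $v\mapsto\phi(v)a$ would force $\gamma(a,v)=0$ for all $v$, hence $a=0$). By (a), both $V(0)\subset\C^{n}$ and $V_{1}(0)\subset W$ are nondegenerate, and since $V(0)$ is $Y$-invariant we get $Y|_{V(0)}\in\fso(V(0),\beta|_{V(0)})$. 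The key step is then the claim that, whenever $V(0)\neq 0\neq V_{1}(0)$, one necessarily has $\dim(V(0)\cap V_{1}(0))\leq 1$; this follows from a short case analysis on the type of $\fg$, using $\sigma(x_{\fk})\cap\sigma(x)=\emptyset$ and the convention of Notation \ref{nota:spectrum}. Indeed, if $\fg$ is type $B$ then $\fk=\fso(2l)$ is type $D$, so $V_{1}(0)\neq 0$ gives $0\in\sigma(x_{\fk})$; disjointness then forces $0\notin\sigma(x)$, which for the type $B$ algebra $\fg$ means $0$ is a simple eigenvalue of $x$, so $\dim V(0)=1$. If $\fg$ is type $D$, then $V(0)\neq 0$ forces $0\notin\sigma(x_{\fk})$, and since $x_{\fk}\in\fso(n-1)$ is of type $B$ this means $0$ is a simple eigenvalue of $x_{\fk}$, so $\dim V_{1}(0)=1$. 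In either case $Y|_{V(0)}$ has image contained in $V(0)\cap V_{1}(0)$, so $Y|_{V(0)}$ has rank at most $1$, hence rank $0$ by (b), and therefore $Y(V(0))=0$. Running over all $\lambda$ gives $Y=0$, the desired contradiction; thus $\fz_{\fk}(x)=0$ and $x\in\fg_{nsreg}$.

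The obstacle I anticipate is precisely this $\lambda=0$ analysis. Lemma \ref{lem:linearalgebra} on its own does not force $Y(V(0))=0$, and one must reconcile the convention of Notation \ref{nota:spectrum}---which deliberately discards the forced zero eigenvalue in the odd orthogonal case---with the generalized-eigenspace bookkeeping; it is the ``no rank-one skew operator'' observation, together with the type-dependent bound $\dim(V(0)\cap V_{1}(0))\leq 1$, that closes the gap. The remaining ingredients---the identity $x_{\fk}=\pi_{W}\circ x|_{W}$, the nondegeneracy of $W$ and $V_{2}$ in the two realizations of Section \ref{ss:symmetricreal}, and fact (a) above---are routine.
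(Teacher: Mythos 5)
Your proof is correct and follows the same overall strategy as the paper's: reduce to showing $\fz_{\fk}(x)=0$, apply Lemma \ref{lem:linearalgebra} with $V_1=W$ and $X_1=x_{\fk}$ to get $Y(V(\lambda))\subseteq V_1(\lambda)$, and then finish by a careful analysis of the generalized $0$-eigenspace using skewness. The main differences are in how the $\lambda=0$ case is packaged. The paper works contrapositively: it takes a nonzero $Y\in\fz_{\fk}(x)$, selects a $\mu$ with $Y(V(\mu))\neq 0$, and proves type-dependent dimension lower bounds directly from skewness (in type $D$, $Y(V(0))\neq 0$ forces $\dim V_1(0)>1$ by manipulating $\beta$ on $V_1(0)$; in type $B$, $\dim V(0)=1$ forces $Y(V(0))=0$), only invoking Notation \ref{nota:spectrum} at the very end to conclude $\mu\in\sigma(x)\cap\sigma(x_{\fk})$. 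You instead assume $x\in\fg(0)$ throughout, use disjointness of spectra up front to derive the bound $\dim(V(0)\cap V_1(0))\leq 1$ via the type analysis, and then close with the clean, type-uniform observation that a nonzero element of $\fso(U,\gamma)$ has rank at least $2$. Your ``rank $\geq 2$'' fact is essentially what the paper's two per-type skewness computations establish, so the mathematical content is the same; your version is somewhat more unified at the last step, while the paper's has the merit of not needing to invoke the spectrum convention until the conclusion. Both are complete and correct; the identification $x_{\fk}=\pi_W\circ x|_W$ (which the paper gets from $\theta$-equivariance and you get from $\beta$-skewness of $x$ together with $W\perp V_2$) and the nondegeneracy of $V(0)$, $V_1(0)$ are straightforward as you indicate.
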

\begin{proof}
Suppose $x\not\in\fg_{nsreg}$ so that $\fz_{\fk}(x)\neq 0$.  
We show that $\sigma(x)\cap\sigma(x_{\fk})\neq \emptyset$ by considering the types $ B, D$ separately.
Suppose that $\fg=\fso(2l)$.  We apply Lemma \ref{lem:linearalgebra} to 
the vector space $V=\C^{2l}=V_{1}\oplus V_{2}$, where 
$V_{1}=\mbox{span} \{ e_{\pm 1}, \dots, e_{\pm (l-1)}, e_{l}+e_{-l}\}$, and 
$V_{2}=\mbox{span} \{e_{l}-e_{-l}\}$ (see Section \ref{ss:soevenreal}).  We
take $X=x$ and $Y$ any nonzero element of $\fz_{\fk}(x)$, so
that $[X,Y]=0$.    The reader can check that
 $\fk$ annihilates $V_{2}$, so $Y(V_2)=0$.  The involution $\theta$ acts on 
$e_{\pm i}$ via $\theta(e_{\pm i})=e_{\pm i}$ for $i\neq l$ and $\theta(e_{l})=e_{-l}$ (see Example \ref{ex:roottypes}).   Since $\theta(Z\cdot \theta(v))=\theta(Z)\cdot v$ for any $Z\in \fg$ and $v\in V$, it follows that $\fk(V_1)\subset V_1$
and $\fp(V_1)\subset V_2$.   Hence, $Y(V_1) \subset V_1$, $x_{\fk}(V_1) \subset
V_1$ and $x_{\fp}(V_1)\subset V_2$.     Thus,  $x_{\fk}$ is the element
$X_1$ from Lemma \ref{lem:linearalgebra}.  Let $V=\bigoplus_{\mu\in\sigma(x)} V(\mu)$ be the decomposition of $V$ into 
generalized eigenspaces of $x$.  Since $Y\neq 0$, there exists $\mu\in\sigma(x)$ such that 
$Y(V(\mu))\neq 0$, so by Lemma \ref{lem:linearalgebra},  $V_{1}(\mu)\neq 0$.
We show by contradiction that if $Y(V(0))\not= 0$, then $\dim(V_1(0))>1$.  
 Indeed, otherwise, $V_1(0)=\C Yv$ for some nonzero $v\in V(0)$.
 Recall the nondegenerate bilinear form  $\beta(\cdot, \cdot)$  on $\C^{2l}$ given in Equation (\ref{eq:beta}) 
and note that $\beta|_{V_{1}\times V_{1}}$ is the bilinear form on $V_{1}\cong \C^{2l-1}$ defining $K=SO(2l-1)$ (see Section \ref{ss:orthoreal}).  
Since $\beta(v_1,v_2)=0$ unless $v_1\in V(\mu_1)$ and $v_2\in V(-\mu_1)$,
we may assume that $\beta(Yv, Yv)=1$.     
Since $Y\in\fz_{\fk}(x)\subset \fz_{\fk}(x_{\fk})$, $Y$ stabilizes the subspace
$V_1(0)$.  Since $Y\in \fk$, $\beta(Y^{2}v, Yv)=-\beta(Yv,Y^{2}v)=-\beta(Y^{2}v, Yv)$, and $\beta(Y^{2}v,Yv)=0$.  Then $\dim(V_1(0))=1$ implies that $Y^{2}v=0$,
so that $1=\beta(Yv, Yv)=-\beta(v, Y^{2}v)=0$.  This contradiction establishes
the claim.   Hence there is $\mu \in \C$ such that $V(\mu)$ and $V_1(\mu)$
are nonzero, and if $\mu=0$, $\dim(V_1(\mu)) > 1$. It follows that
$\mu \in \sigma(x) \cap \sigma(x_{\fk})$ (Notation \ref{nota:spectrum}), so $x\not\in \fg(0)$, completing
the proof for type $D$.

If $\fg=\fso(2l+1)$, we let $V=\C^{2l+1}$ and $V_1 = \mbox{span} \{ e_{\pm 1}, \dots, e_{\pm l} \}$,
$V_2=\mbox{span} \{ e_0 \}$ (see Section \ref{ss:sooddreal}). As above, we
take $Y$ to be any nonzero element of $\fz_{\fk}(x)$, and verify that $x$
and $Y$ satisfy the hypotheses of Lemma \ref{lem:linearalgebra} and
$x_{\fk}$ is the element $X_{1}$ from the lemma.  
Let $\beta(\cdot, \cdot)$ be the bilinear form from Equation (\ref{eq:beta}).
Then by Lemma \ref{lem:linearalgebra}, there is a generalized eigenspace $V(\mu)$ such that $Y(V(\mu))$ is a nonzero subspace of $V_1(\mu)$.  We claim that
if $\dim(V(0))=1$, then $Y(V(0))=0$.   Indeed, if $\dim(V(0))=1$, then
$V(0)$ is spanned by a nonzero vector $v$ and as above $Y(V(0))\subset V(0)$.
Then $\beta(Yv,v)=-\beta(v, Yv)=-\beta(Yv,v)$, so that $Yv=0$ since
$\beta$ is nondegenerate on $V(0)$.
Hence, there is $\mu$ such that $Y(V(\mu))$ is a nonzero subspace of $V_1(\mu)$
and either $\mu\not=0$ or $\mu=0$ and
$\dim(V(0))\ge 2$.   As above, we conclude that $x\not\in \fg(0)$.   
\end{proof}  

% COROLLARY 4.16 NEXT.

Let $c=(c_{r_{n-1}}, c_{r_{n}})\in \C^{r_{n-1}}\times\C^{r_{n}}$ and write $c_{r_{i}}=(c_{i, 1},\dots, c_{i, r_{i}})\in \C^{r_{i}}$ for $i=n-1,\, n$.
 Let $I_{n, c}$ be the ideal of $\C[\fg]$ generated by the functions $f_{i, j}-c_{i,j}$ for $i=n-1, \, n$ and $j=1,\dots, r_{i}$, and note that the zero set
of $I_{n,c}$ is $\Phi_n^{-1}(c)$.  
 \begin{cor}\label{c:isgenericallyrad}
Let $x\in \fg(0)$, and let $c=\Phi_n(x)$.
% Let $c=(c_{r_{n-1}}, c_{r_{n}})=\Phi_{n}(x)$ with $x\in\fg(0)$, so that $\Phi_n^{-1}(c)\subset\fg(0)$ by Remark \ref{r:spectrum}.  
 \begin{enumerate}
\item The ideal $I_{n,c}$ is the ideal of functions vanishing on $\Phi_n^{-1}(c)$, and the variety $\Phi_{n}^{-1}(c)$ is smooth.
\item The fibre $\Phi_{n}^{-1}(c)$ is a single closed $K$-orbit.
\end{enumerate}\end{cor}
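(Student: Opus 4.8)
The plan is to establish (2) first and then deduce (1) from a Zariski-tangent-space count combined with the homogeneity of the fibre. The crucial step, and the one I expect to be the \emph{main obstacle}, is to show that the orbit $\Ad(K)\cdot x$ is closed in $\fg$. Since $x\in\fg(0)$, Theorem \ref{thm:gzero} gives $x\in\fg_{nsreg}$, so $\fz_{\fk}(x)=0$ and $\dim\Ad(K)\cdot x=\dim K=\dim\fk$. If the orbit were not closed, then $\overline{\Ad(K)\cdot x}\setminus\Ad(K)\cdot x$ would be a nonempty closed $K$-stable subset of dimension strictly less than $\dim\fk$, and would therefore contain a closed orbit $\Ad(K)\cdot x_{0}$ with $\dim\Ad(K)\cdot x_{0}<\dim\fk$. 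As $\Phi_{n}$ is $K$-invariant and regular, it is constant on $\overline{\Ad(K)\cdot x}$, so $\Phi_{n}(x_{0})=\Phi_{n}(x)=c$; reading off the coordinates of $\Phi_{n}$ shows that $x_{0}$ and $x$ have the same characteristic polynomial, and likewise $(x_{0})_{\fk}$ and $x_{\fk}$, so $\sigma(x_{0})=\sigma(x)$ and $\sigma((x_{0})_{\fk})=\sigma(x_{\fk})$, whence $\sigma((x_{0})_{\fk})\cap\sigma(x_{0})=\emptyset$ and $x_{0}\in\fg(0)$. Then Theorem \ref{thm:gzero} forces $x_{0}\in\fg_{nsreg}$ and $\dim\Ad(K)\cdot x_{0}=\dim\fk$, a contradiction. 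The content of this step is that $\Phi_{n}$ preserves both $\sigma(y)$ and $\sigma(y_{\fk})$, so the defining disjointness condition of $\fg(0)$ — and hence $n$-strong regularity — is inherited by every element of the fibre; a boundary orbit of $\Ad(K)\cdot x$ would then have to be a full-dimensional orbit, which is impossible.

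Granting closedness, $\Ad(K)\cdot x$ is a closed irreducible subset of the fibre $\Phi_{n}^{-1}(c)$ of dimension $\dim\fk$; by flatness of $\Phi_{n}$ (Proposition \ref{prop:flat}(3)) and Equation (\ref{eq:quotdim}), every irreducible component of $\Phi_{n}^{-1}(c)$ has dimension $\dim\fg-r_{n}-r_{n-1}=\dim\fk$, so $\Ad(K)\cdot x$ is a component. Since $\Phi_{n}$ is the GIT quotient $\fg\to\fg//K$ (Proposition \ref{prop:flat}(2)), the fibre $\Phi_{n}^{-1}(c)$ contains a unique closed $K$-orbit, which must be $\Ad(K)\cdot x$, and this orbit lies in the closure of every $K$-orbit of the fibre. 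Since any such closure is irreducible of dimension at most $\dim\fk$ and contains the closed $\dim\fk$-dimensional set $\Ad(K)\cdot x$, it equals $\Ad(K)\cdot x$; hence every $K$-orbit in $\Phi_{n}^{-1}(c)$ coincides with $\Ad(K)\cdot x$, and set-theoretically $\Phi_{n}^{-1}(c)$ is the single closed $K$-orbit $\Ad(K)\cdot x$. This proves (2).

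For (1), I would work with the scheme $\mathrm{Spec}(\C[\fg]/I_{n,c})$, whose underlying set is $\Ad(K)\cdot x$ by (2) and whose local dimension at each point is $\dim\fg-r_{n}-r_{n-1}=\dim\fk$ by flatness. Since $x\in\fg_{nsreg}$, Theorem \ref{thm:Kostant} together with the identification $\fg_{nsreg}\cong(\fk_{\Delta}^{\perp})_{reg}$ (Remark \ref{r:generallin2}) gives $\omega_{\fg//K}(x)\neq 0$, so the $r_{n-1}+r_{n}$ differentials $df_{n-1,j}(x)$, $df_{n,j}(x)$ are linearly independent, $d\Phi_{n}(x)$ is surjective, and the Zariski tangent space $T_{x}(\Phi_{n}^{-1}(c))=\ker d\Phi_{n}(x)$ has dimension $\dim\fk$. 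As this matches the local dimension, $\Phi_{n}^{-1}(c)$ is smooth at $x$; being $K$-stable with $K$ acting transitively on its points, it is smooth at every point, hence reduced. Therefore $I_{n,c}$ is radical — i.e.\ it is the full ideal of functions vanishing on $\Phi_{n}^{-1}(c)$ — and $\Phi_{n}^{-1}(c)$ is smooth, giving (1). Beyond the closedness step, everything here is formal, using only flatness, the GIT description of $\Phi_{n}$, and Kostant's criterion via $\omega_{\fg//K}$.
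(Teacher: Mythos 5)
Your proof is correct, and it uses exactly the same two key inputs as the paper's: every point of the fibre lies in $\fg(0)$ because $\Phi_n$ determines the characteristic polynomials of $y$ and $y_\fk$ (Remark \ref{r:spectrum}), hence every point is $n$-strongly regular (Theorem \ref{thm:gzero}) and so has a $\dim\fk$-dimensional $K$-orbit with independent differentials (Theorem \ref{thm:Kostant}, Lemma \ref{lem:dimest}). The difference is organizational. The paper does (1) first: since the $r_{n-1}+r_n$ differentials generating $I_{n,c}$ are independent at \emph{every} point of $\Phi_n^{-1}(c)$, Theorem 18.15(a) of Eisenbud gives radicality and smoothness directly, no appeal to (2) needed; then (2) follows because every orbit in the fibre has dimension equal to $\dim\Phi_n^{-1}(c)=\dim\fk$, combined with uniqueness of the closed orbit. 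You do (2) first and then (1) from a tangent-space count at $x$ plus $K$-homogeneity, which also works. Your ``main obstacle'' — showing $\Ad(K)\cdot x$ is closed by contradiction with boundary orbits — is sound, but it is an unnecessary detour: once you know every $K$-orbit in the fibre has dimension $\dim\fk$ and the fibre is equidimensional of dimension $\dim\fk$, every orbit is automatically open in the fibre, hence also closed, and the GIT uniqueness of the closed orbit immediately forces a single orbit. Nothing is wrong in your contradiction argument (the inheritance of $\fg(0)$-membership along the fibre is precisely the right observation), but the paper's phrasing bypasses it. One small citation slip: the identification $\fg_{nsreg}\cong(\fk_\Delta^\perp)_{reg}$ you attribute to Remark \ref{r:generallin2} is really just Definition \ref{dfn:nsreg} together with Equation (\ref{eq:Kreg}); Remark \ref{r:generallin2} is a comment about the general linear case, though it does record the fact.
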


\begin{proof}
As in Remark \ref{r:spectrum}, the fibre $\Phi_n^{-1}(c) \subset \fg(0)$.
 By Theorem \ref{thm:gzero} every element of the fibre $\Phi_{n}^{-1}(c)$ is $n$-strongly regular. Hence Theorem \ref{thm:Kostant} implies that the differentials 
 $\{df_{i,j}(x):\, i=n-1,\, n; \; j=1,\dots, r_{i}\}$ are independent for all $x\in \Phi_{n}^{-1}(c)$.   
By Theorem 18.15 (a) of \cite{Eis}, the ideal 
 $I_{n,c}$ is radical, so $I_{n,c}$ is the ideal of 
$\Phi^{-1}_{n}(c)$.  The smoothness
 of $\Phi^{-1}_{n}(c)$ now follows since the differentials of the generators of $I_{n,c}$ are independent at every point of $\Phi_{n}^{-1}(c)$.
For the second assertion, note first that 
$$\dim(K)=\dim(\fg)-\dim (\fg//K)=\dim( \Phi_{n}^{-1}(c)),$$
where the first equality follows from Equations (\ref{eq:flagvarieties}) and (\ref{eq:quotdim}), and the second equality follows from Proposition \ref{prop:flat} (3).  
%=\dim(\Phi_{n}^{-1}(c)).
By Lemma \ref{lem:dimest}, $\dim(\Ad(K)\cdot x)=\dim(K)$ for all 
$x\in \Phi_{n}^{-1}(c)$.  By Proposition \ref{prop:flat} (2),
each fibre $\Phi_{n}^{-1}(c)$ has a unique closed $K$-orbit, which
implies the assertion.
\end{proof} 

Consider the Zariski open subvariety of $\fg$:
\begin{equation}\label{eq:fgtheta}
\fg_{\Theta}:=\{x\in\fg:\, \sigma(x_{i})\cap \sigma(x_{i+1})=\emptyset \mbox{ for } i=2,\dots, n-1\}.
\end{equation}
\begin{prop}\label{prop:orthofgtheta}
The elements of $\fg_{\Theta}$ are strongly regular.
\end{prop}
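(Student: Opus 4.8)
The plan is to deduce this from the simplified strong-regularity criterion of Proposition \ref{prop:fullsreg} together with Theorem \ref{thm:gzero}, applied one step at a time along the chain (\ref{eq:GZchain}). By Proposition \ref{prop:fullsreg}, it suffices to show that for $x\in\fg_{\Theta}$ we have $\fz_{\fg_i}(x_i)\cap\fz_{\fg_{i+1}}(x_{i+1})=0$ for every $i=2,\dots,n-1$, and these are exactly the conditions encoded in the definition of $\fg_{\Theta}$ once we read them through the relevant symmetric pair.

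Fix such an $i$ and consider the pair $(\fg_{i+1},\fg_i)=(\fso(i+1),\fso(i))$. Since $\fg_i=\fg_{i+1}^{\theta_{i+1}}$ is precisely the split rank one symmetric subalgebra of $\fg_{i+1}$ of the type treated in Section \ref{ss:symmetricreal}, every statement proved for the distinguished pair $(\fg,\fk)=(\fso(n),\fso(n-1))$ applies verbatim to $(\fg_{i+1},\fg_i)$; in particular Theorem \ref{thm:gzero} holds with $\fg_{i+1}$ in place of $\fg$ and $\fg_i$ in place of $\fk$. Next, because the orthogonal projections $\pi_j$ are compatible with the nesting of the trace-form decompositions, the element $x_i=\pi_i(x)$ equals $(x_{i+1})_{\fg_i}$, the $\fg_i$-component of $x_{i+1}$ relative to the Cartan decomposition $\fg_{i+1}=\fg_i\oplus\fp_{i+1}$. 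Hence the defining condition $\sigma(x_i)\cap\sigma(x_{i+1})=\emptyset$ of $\fg_{\Theta}$ says exactly that $x_{i+1}$ lies in the set $\fg_{i+1}(0)$ attached to the pair $(\fg_{i+1},\fg_i)$ (Notation \ref{nota:spectrum} applies equally in $\fg_{i+1}$).

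Applying Theorem \ref{thm:gzero} to this pair then gives $x_{i+1}\in(\fg_{i+1})_{nsreg}$, i.e. $\fz_{\fg_i}(x_{i+1})=0$. As observed in the proof of Lemma \ref{lem:dimest}, $\fz_{\fg_i}(x_{i+1})=\fz_{\fg_i}((x_{i+1})_{\fg_i})\cap\fz_{\fg_{i+1}}(x_{i+1})=\fz_{\fg_i}(x_i)\cap\fz_{\fg_{i+1}}(x_{i+1})$, so this is precisely the vanishing we needed. Since $i$ was arbitrary, Proposition \ref{prop:fullsreg} yields $x\in\fg_{sreg}$, as desired.

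The only point requiring genuine care, and the likely main obstacle, is the claim that Theorem \ref{thm:gzero} and the auxiliary facts it relies on (the identity $\fz_{\fg_i}(x)=\fz_{\fg_i}(x_{\fg_i})\cap\fz_{\fg_{i+1}}(x)$ and the transitivity of the projections $\pi_i$) transfer cleanly from the top pair $(\fso(n),\fso(n-1))$ to each intermediate pair $(\fso(i+1),\fso(i))$. This amounts to checking that the realization of Section \ref{ss:orthoreal} and the involutions $\theta_{i+1}$ of Section \ref{ss:symmetricreal} restrict correctly along the chain; the low-rank base case $i=2$, where $\fg_2=\fso(2)$ is one-dimensional abelian, should be verified separately, but it is immediate since then $\fz_{\fg_2}(x_2)=\fg_2$ and the same spectral argument runs inside $\fg_3\cong\fsl(2)$.
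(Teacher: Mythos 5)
Your proposal is correct and follows essentially the same route as the paper's proof: view $\fg_{\Theta}$ as encoding the conditions $x_{i}\in\fg_{i}(0)$ for each $i$, invoke Theorem \ref{thm:gzero} step by step along the chain (\ref{eq:GZchain}) to get the vanishing of the consecutive intersections $\fz_{\fg_{i}}(x_{i})\cap\fz_{\fg_{i+1}}(x_{i+1})$, and conclude via Proposition \ref{prop:fullsreg}. The paper states this more tersely --- it writes simply that $x\in\fg_{\Theta}$ implies $x_{i}\in\fg_{i}(0)$ for $i=3,\dots,n$ and applies Theorem \ref{thm:gzero} --- while you spell out the transfer of Theorem \ref{thm:gzero} to each intermediate pair $(\fso(i+1),\fso(i))$ and the compatibility of the projections $\pi_{j}$; these checks are implicitly subsumed by the paper's setup of the chain in Section \ref{ss:GZsystems}, but your making them explicit is harmless and correct.
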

\begin{proof}
If $x\in \fg_{\Theta}$, then $x_{i}\in\fg_{i}(0)$ for $i=3,\dots, n$.  By 
Theorem \ref{thm:gzero},  $\fz_{\fg_{i-1}}(x_{i-1})\cap\fz_{\fg_{i}}(x_{i})=0$.  
The result now follows from Proposition \ref{prop:fullsreg}. 
\end{proof}

\begin{cor}\label{c:thetafibre}
Let $x\in\fg_{\Theta}$.  Then $\Phi^{-1}(\Phi(x))=\Phi^{-1}(\Phi(x))_{sreg}$.
\end{cor}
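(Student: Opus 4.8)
The plan is to show that for $x \in \fg_\Theta$, every element of the fibre $\Phi^{-1}(\Phi(x))$ is strongly regular, so that the fibre coincides with its strongly regular locus. The key observation is that the condition defining $\fg_\Theta$ depends only on the spectra $\sigma(x_2), \dots, \sigma(x_n)$, and by Remark \ref{r:spectrum} these spectra are constant along fibres of $\Phi$. So first I would take $y \in \Phi^{-1}(\Phi(x))$ and note that $\sigma(y_i) = \sigma(x_i)$ for all $i = 2, \dots, n$ by Remark \ref{r:spectrum}. Since $x \in \fg_\Theta$ means $\sigma(x_i) \cap \sigma(x_{i+1}) = \emptyset$ for $i = 2, \dots, n-1$, we immediately get $\sigma(y_i) \cap \sigma(y_{i+1}) = \emptyset$ for the same range of $i$, i.e.\ $y \in \fg_\Theta$ as well. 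In other words, $\fg_\Theta$ is a union of fibres of $\Phi$, so $\Phi^{-1}(\Phi(x)) \subset \fg_\Theta$.

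Next I would invoke Proposition \ref{prop:orthofgtheta}, which states that every element of $\fg_\Theta$ is strongly regular. Combining this with the containment $\Phi^{-1}(\Phi(x)) \subset \fg_\Theta$ just established gives $\Phi^{-1}(\Phi(x)) \subset \fg_{sreg}$. Therefore $\Phi^{-1}(\Phi(x)) \subset \Phi^{-1}(\Phi(x)) \cap \fg_{sreg} = \Phi^{-1}(\Phi(x))_{sreg}$, using the notation of Definition-Notation \ref{dfnote:sreg}. The reverse inclusion $\Phi^{-1}(\Phi(x))_{sreg} \subset \Phi^{-1}(\Phi(x))$ is immediate from the definition, so the two sets are equal, which is the claim.

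There is essentially no obstacle here; this is a short bookkeeping argument that simply assembles Remark \ref{r:spectrum} (spectra are fibre invariants), the definition of $\fg_\Theta$, and Proposition \ref{prop:orthofgtheta}. The only minor point to be careful about is the precise meaning of ``spectrum'' in type $B$ (Notation \ref{nota:spectrum}), where the eigenvalue $0$ is counted only when it has multiplicity strictly greater than one; but Remark \ref{r:spectrum} is already phrased using exactly this convention, so the argument goes through verbatim. I would write the proof as a two-line argument: $\Phi^{-1}(\Phi(x)) \subset \fg_\Theta$ by Remark \ref{r:spectrum}, hence consists of strongly regular elements by Proposition \ref{prop:orthofgtheta}.
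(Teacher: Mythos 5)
Your proposal is correct and matches the paper's proof exactly: the paper also observes that $\Phi^{-1}(\Phi(x)) \subset \fg_{\Theta}$ by Remark \ref{r:spectrum} and then applies Proposition \ref{prop:orthofgtheta}. Your write-up simply makes the two steps more explicit, including the careful handling of Notation \ref{nota:spectrum} in type $B$, which is a reasonable thing to spell out.
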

\begin{proof}
By Remark \ref{r:spectrum},  $\Phi^{-1}(\Phi(x))\subset\fg_{\Theta}$ for $x\in\fg_{\Theta}$.  The result then follows from 
Proposition \ref{prop:orthofgtheta}.
\end{proof}

\begin{rem}\label{r:generallin3}
Let $\fg=\fgl(n)$.  In Theorem 5.15 of \cite{Col1}, the first author proved that
$\fg_{\Theta}\subset\fg_{sreg}$ for the analogously defined set $\fg_{\Theta}$. 
The methods of this section also prove that $\fg_{\Theta}\subset\fg_{sreg}$,
and our proof is significantly simpler than the proof in \cite{Col1}.
To prove Theorem \ref{thm:gzero} for $\fgl(n)$ we simply apply Lemma \ref{lem:linearalgebra} with 
$\fg=\fgl(n)$, $\fk=\fgl(n-1)$, $V=\C^{n}$, with $V_{1}=\mbox{span}\{e_{1},\dots, e_{n-1}\}$ and 
$V_{2}=\mbox{span}\{e_{n}\}$.  Proposition \ref{prop:orthofgtheta} follows, since the 
analogue of Proposition \ref{prop:fullsreg} also holds in this case as we observed in Remark \ref{r:generallin2}.  Thus, we can construct the strongly regular elements $\fg_{\Theta}$ of $\fg$ in both
orthogonal and general linear cases using the same framework.  

\end{rem}

We can now prove one of our main results.

\begin{thm}\label{thm:intsystem}
The restriction of the GZ functions $J_{GZ}$ to a regular adjoint orbit in $\fg$ forms a completely integrable system on the orbit. 
\end{thm}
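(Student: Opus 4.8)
The plan is to reduce the statement to the criterion established in Proposition \ref{prop:sregintegrable}, which says that the GZ functions restrict to a completely integrable system on a regular adjoint orbit $\Ad(G)\cdot x$ if and only if the orbit meets $\fg_{sreg}$. So the entire theorem reduces to showing: every regular adjoint orbit in $\fg$ contains a strongly regular element.

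The key steps are as follows. First, fix $x \in \fg_{reg}$, and consider the adjoint orbit $\Ad(G)\cdot x$. Since $x$ is regular, the values of the top-level invariants $f_{n,1},\dots,f_{n,r_n}$ on $x$ determine a point $c_n \in \C^{r_n}$, and $\Ad(G)\cdot x$ lies inside the fibre of the full invariant quotient $\chi_\fg\colon \fg \to \fg//G$ over $c_n$. Next, I would invoke Theorem \ref{thm:surj}: the KW map $\Phi$ is surjective, and in fact I need the finer observation that I can choose a point $c = (c_2,\dots,c_{n-1},c_n) \in \C^{r_2}\times\dots\times\C^{r_n}$ whose last coordinate is the prescribed $c_n$ and whose remaining coordinates are chosen so that the corresponding spectra interlace strictly — that is, so that the fibre $\Phi^{-1}(c)$ lies in $\fg_\Theta$. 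Concretely, given $c_n$, which fixes $\sigma(x_n) = \sigma(x)$, I choose $c_{n-1}$ so that $\sigma(x_{n-1})$ is disjoint from $\sigma(x_n)$, then $c_{n-2}$ disjoint from $\sigma(x_{n-1})$, and so on down the chain; such choices exist since each spectrum is a finite set and we have continuous parameters to play with. By Theorem \ref{thm:surj} the fibre $\Phi^{-1}(c)$ is non-empty, and by construction every element of it lies in $\fg_\Theta$, hence is strongly regular by Proposition \ref{prop:orthofgtheta}, and in particular is a regular element of $\fg$.

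Now let $z \in \Phi^{-1}(c)$; then $z \in \fg_{sreg}$ and $z \in \fg_{reg}$ with $\chi_\fg(z) = c_n = \chi_\fg(x)$. Since both $x$ and $z$ are regular elements of $\fg$ lying in the same fibre of the adjoint quotient $\chi_\fg$, Kostant's theorem (Theorem 3 of \cite{Kostant63}) implies they are $\Ad(G)$-conjugate: $z = \Ad(g)\cdot x$ for some $g \in G$. Therefore $z \in \Ad(G)\cdot x \cap \fg_{sreg}$, so this intersection is non-empty. Proposition \ref{prop:sregintegrable} then gives complete integrability of $J_{GZ}$ on $\Ad(G)\cdot x$, as desired.

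The main obstacle — really the only subtle point — is justifying that one can choose the intermediate coordinates $c_2,\dots,c_{n-1}$ compatibly with the prescribed $c_n$ so that the fibre $\Phi^{-1}(c)$ is forced into $\fg_\Theta$. This requires unwinding the relationship between a point $c_i \in \C^{r_i}$ and the spectrum $\sigma(\cdot)$ it pins down (via Remark \ref{r:spectrum} and the fact that the $\psi_{i,j}$ determine characteristic polynomials), and checking that the notion of spectrum in Notation \ref{nota:spectrum} — with its convention about the eigenvalue $0$ in type $B$ — behaves well under these choices, in particular that one genuinely has enough freedom to separate consecutive spectra. Once that bookkeeping is in place, the rest is a direct appeal to Theorem \ref{thm:surj}, Proposition \ref{prop:orthofgtheta}, Kostant's conjugacy theorem, and Proposition \ref{prop:sregintegrable}.
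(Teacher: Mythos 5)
Your proposal is correct and is essentially the paper's own proof: reduce via Proposition \ref{prop:sregintegrable} to showing $\Ad(G)\cdot x \cap \fg_{sreg}\neq\emptyset$, use surjectivity of $\Phi$ (Theorem \ref{thm:surj}) to produce a point of $\fg_{\Theta}$ in the correct fibre of the adjoint quotient $\chi$, apply Proposition \ref{prop:orthofgtheta} and (\ref{eq:totreg}) to see that point is strongly regular and regular, and conclude by Kostant's conjugacy theorem that it lies on $\Ad(G)\cdot x$. The only difference is that you explicitly spell out the spectral bookkeeping (choosing $c_2,\dots,c_{n-1}$ to force the fibre $\Phi^{-1}(c)$ into $\fg_{\Theta}$, via Remark \ref{r:spectrum}), a step the paper's proof uses silently when asserting the existence of $y\in\fg_{\Theta}$ with $\chi(y)=\chi(x)$.
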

\begin{proof}  
Let $x\in\fg_{reg}$ and let $\Ad(G)\cdot x$ be the adjoint orbit containing $x$.  
By Proposition \ref{prop:sregintegrable}, it suffices to show that Equation (\ref{eq:sregintersect}) holds.
Let $\chi:\fg\to \fg//G$ be the adjoint quotient.  
Since the Kostant-Wallach map $\Phi$ is surjective (Theorem \ref{thm:surj}), there exists $y\in \fg_{\Theta}$ 
such that $\chi(y)=\chi(x)$.  It follows from Proposition \ref{prop:orthofgtheta} that $y\in\fg_{sreg}$, whence 
$y\in\fg_{reg}$ by (\ref{eq:totreg}).  Therefore $y\in \chi^{-1}(\chi(x))\cap\fg_{reg}=\Ad(G)\cdot x$, and
$\Ad(G)\cdot x\cap\fg_{sreg}\neq\emptyset$.

\end{proof}

We end this section by describing the KW fibre $\Phi^{-1}(\Phi(x))$ for $x\in\fg_{\Theta}$.  
In particular, we generalize Corollary 5.18 of \cite{Col1} to the orthogonal setting using Proposition \ref{prop:orthofgtheta} 
and Corollary \ref{c:isgenericallyrad}.  Our argument below can also be used to give an easier proof 
of Corollary 5.18 of \cite{Col1} in the general linear case.  In the proof of the following theorem, we use repeatedly the easy fact that the projection 
$\pi_{i}: \fg\to\fg_{i}, \, \pi_{i}(x)=x_{i},$ is $\Ad(G_{i})$-equivariant.

\begin{thm} \label{thm:genericfibre}
Let $x\in\fg_{\Theta}$, and let $Z_{G_{i}}(x_{i})$ be the centralizer of $x_{i}\in\fg_{i}$ in $G_{i}$ viewed as a subgroup of $G$.
 Then the morphism:
\begin{equation}\label{eq:Psidefn}
\Psi:\displaystyle\prod_{i=2}^{n-1} Z_{G_{i}}(x_{i})\rightarrow \Phi^{-1}(\Phi(x))
\mbox{ given by }
\Psi(z_{2}, \dots, z_{n-1})=\Ad(z_{2})\dots\Ad(z_{n-1})\cdot x,
\end{equation}
$z_{i}\in Z_{G_{i}}(x_{i})$, is an isomorphism of non-singular algebraic varieties.
\end{thm}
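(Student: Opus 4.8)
The plan is to show $\Psi$ is a well-defined bijective morphism of smooth varieties and then invoke Zariski's main theorem (or smoothness of the source together with injectivity of the differential) to conclude it is an isomorphism. First I would check that $\Psi$ is well-defined, i.e.\ that $\Ad(z_2)\cdots\Ad(z_{n-1})\cdot x \in \Phi^{-1}(\Phi(x))$ for $z_i \in Z_{G_i}(x_i)$. This follows from the $\Ad(G_i)$-equivariance of $\pi_i$: applying $\Ad(z_{n-1})$ with $z_{n-1}\in Z_{G_{n-1}}(x_{n-1})$ fixes $x_{n-1},\dots,x_2$ (since $Z_{G_{n-1}}(x_{n-1})\subset G_{n-1}$ and $\pi_j$ for $j\le n-1$ factors through $\pi_{n-1}$) and does not change the $G_n$-invariants of $x$; proceeding inductively down the chain, each $\Ad(z_i)$ preserves $f_{j,k}$ for all $j\le i$ and obviously for $j=n$, hence preserves all GZ functions, so the image lies in the fibre. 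I would also record that each source factor $Z_{G_i}(x_i)$ is smooth and irreducible: since $x\in\fg_\Theta$, Proposition \ref{prop:orthofgtheta} gives $x$ strongly regular, so each $x_i$ is regular in $\fg_i$, and centralizers of regular elements in a (connected reductive) group are abelian of dimension $r_i$ and connected, hence the product is a smooth connected abelian group of dimension $\sum_{i=2}^{n-1} r_i$.

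Next I would compute the dimension of the target. By Corollary \ref{c:thetafibre}, $\Phi^{-1}(\Phi(x))=\Phi^{-1}(\Phi(x))_{sreg}$, and the strongly regular condition together with Equation (\ref{eq:sumri}) shows each such fibre has dimension $\sum_{i=2}^{n-1} r_i$, matching $\dim\prod_{i=2}^{n-1}Z_{G_i}(x_i)$. (Concretely: the GZ differentials are independent along the fibre, so the fibre is smooth of codimension $\sum_{i=2}^{n}r_i = \dim\fg - \dim\fg_{sreg}$-type count, giving the stated dimension after using (\ref{eq:sumri}) and $\dim\fg = \dim\B_\fg\cdot 2 + r_n$.) Smoothness of the target also follows since strongly regular points are smooth points of their fibre, by independence of the GZ differentials.

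The main work is \textbf{injectivity} and \textbf{surjectivity} of $\Psi$. For injectivity, suppose $\Ad(z_2)\cdots\Ad(z_{n-1})\cdot x = \Ad(z'_2)\cdots\Ad(z'_{n-1})\cdot x$. Projecting to $\fg_2$: since all $z_i, z'_i$ with $i\ge 2$ lie in $G_i$ and fix $x_2$ except possibly... — actually I would argue from the top down. Let $w = (z'_{n-1})^{-1}\cdots(z'_2)^{-1}z_2\cdots z_{n-1}$; hmm, the groups don't commute, so more care is needed. The clean approach: apply $\pi_{n-1}$ to both sides; since $Z_{G_j}(x_j)\subset G_{n-1}$ for $j\le n-1$, and $\pi_{n-1}$ is $G_{n-1}$-equivariant while $\pi_{n-1}(x)=x_{n-1}$, and $z_{n-1},z'_{n-1}$ fix $x_{n-1}$, we get $\Ad(z_{n-1}^{-1}z'_{n-1})$ acting appropriately — inductively peeling off factors, using at each stage that $z_i\in Z_{G_i}(x_i)$ and the analogue of Lemma \ref{l:centralizers} applied to the pair $\fg_{i-1}\subset\fg_i$ to force the relevant group element to be trivial; this is where strong regularity (hence $n$-strong regularity at each step, via $\fz_{\fg_{i-1}}(x_{i-1})\cap\fz_{\fg_i}(x_i)=0$) is essential. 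So injectivity should reduce to repeated application of the triviality-of-centralizers result Lemma \ref{l:centralizers} (suitably transcribed to each step $\fg_i \supset \fg_{i-1}$). For surjectivity, given $y\in\Phi^{-1}(\Phi(x))$: since $\sigma(y_{n-1})=\sigma(x_{n-1})$ and $\sigma(y_n)=\sigma(x_n)$ and both are regular, $\Phi_n^{-1}(\Phi_n(x))$ — no wait, $y$ need not be $G$-conjugate to $x$ directly; instead I would peel off one coordinate at a time: $y_{n-1}$ and $x_{n-1}$ lie in the same regular adjoint $G_{n-1}$-orbit (same invariants, both regular), so $y_{n-1}=\Ad(k_{n-1})\cdot x_{n-1}$ for some $k_{n-1}\in G_{n-1}$; replacing $y$ by $\Ad(k_{n-1}^{-1})\cdot y$ reduces to the case $y_{n-1}=x_{n-1}$, and then $y, x$ lie in the same $G_n$-orbit AND have $y_{n-1}=x_{n-1}$, so $y=\Ad(g)\cdot x$ with $g\in Z_{G_n}(x_n)$... but we also need $g$ adjustable to land in the product of the smaller centralizers — this requires an inductive argument on $n$, using Corollary \ref{c:isgenericallyrad} (the fibre of $\Phi_n$ through a point of $\fg(0)$ is a single $K$-orbit) to control the top step and the inductive hypothesis on $\fk=\fg_{n-1}$ for the rest. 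The hard part will be organizing this double induction cleanly — bookkeeping which conjugating elements live in which centralizer subgroup — and confirming that $d\Psi$ is injective at the identity (equivalently that $\sum_i \fz_{\fg_i}(x_i)$, viewed inside $\fg$ via the tangent action, is a direct sum of dimension $\sum r_i$), which again is exactly the strong regularity condition $\fz_{\fg_i}(x_i)\cap\fz_{\fg_{i+1}}(x_{i+1})=0$ of Proposition \ref{prop:fullsreg}. Once $\Psi$ is a bijective morphism between smooth varieties of the same dimension with injective differential, it is an isomorphism.
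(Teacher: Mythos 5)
Your high-level strategy matches the paper's: show $\Psi$ is a well-defined bijective morphism, verify both source and target are non-singular, and conclude via Zariski's main theorem. You also identify the correct ingredients: Lemma \ref{l:centralizers}, Corollary \ref{c:isgenericallyrad}, strong regularity from Proposition \ref{prop:orthofgtheta}, and Corollary \ref{c:thetafibre}. The well-definedness argument you sketch (each $\Ad(z_i)$ preserves the GZ functions, checked via $\Ad(G_i)$-equivariance of the projections) is the same as the paper's, which it formalizes via Equation (\ref{eq:icutoff}).

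Where you diverge is the direction of the peeling argument for injectivity and surjectivity. The paper argues \emph{bottom-up}, from $\fg_2$ upward, entirely within a single proof: for surjectivity, first note $y_2 = x_2$ automatically, then use Corollary \ref{c:isgenericallyrad} at each step $\Phi_{i+1}^{-1}(c_i, c_{i+1})$ to conjugate by an element of $Z_{G_i}(x_i)$; for injectivity, project to $\fg_3$, use Lemma \ref{l:centralizers}, and iterate upward. You instead propose a \emph{top-down} approach, which can be made to work but must be organized as an induction on $n$ (using the theorem for $\fg_{n-1}$ as the inductive hypothesis). As written, however, your surjectivity sketch has a genuine misstep: you conjugate $y$ by an \emph{arbitrary} $k_{n-1}\in G_{n-1}$ with $y_{n-1}=\Ad(k_{n-1})\cdot x_{n-1}$, but then $y$ has been displaced by a factor $k_{n-1}$ that need not lie in the product $\prod_{i<n-1} Z_{G_i}(x_i)$, so you cannot land in the image of $\Psi$. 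To make the top-down version work, you must invoke the inductive hypothesis to choose $k_{n-1} = z_2\cdots z_{n-2}$ with $z_i\in Z_{G_i}(x_i)$ \emph{before} conjugating; you gesture at this but do not carry it out. Also, your claim that ``$y=\Ad(g)\cdot x$ with $g\in Z_{G_n}(x_n)$'' is wrong: Corollary \ref{c:isgenericallyrad} produces a $K=G_{n-1}$-orbit, so the conjugating element lies in $Z_{G_{n-1}}(x_{n-1})$, not $Z_{G_n}(x_n)$, and $Z_{G_n}(x_n)$ does not appear in the source of $\Psi$. Your injectivity sketch is similarly unresolved (``hmm, the groups don't commute, so more care is needed''); the paper cleanly resolves this by using Equation (\ref{eq:icutoff}) to isolate $z_2$ vs.\ $\widetilde{z_2}$ first. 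Finally, the parenthetical dimension count (involving ``$\dim\fg - \dim\fg_{sreg}$-type count'') is garbled and unnecessary; the paper avoids dimension counting entirely by appealing to Zariski's main theorem once bijectivity and non-singularity are established.
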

\begin{proof}
We first note that the image of $\Psi$ is contained in 
the fibre $\Phi^{-1}(\Phi(x))$.  Indeed, observe that 
$\Ad(z_{n-1})\cdot x\in\Phi^{-1}(\Phi(x)),$ since $(\Ad(z_{n-1})\cdot x)_{i}=x_{i}$ for all $i\leq n-1$.  
In fact, for any $i=2,\dots, n-1$ we have:  
\begin{equation}\label{eq:icutoff} 
(\Ad(z_{i}\dots z_{n-1})\cdot x)_{i}=x_{i}.
\end{equation}
We prove Equation (\ref{eq:icutoff}) by downward induction on $i$, 
with the base case $i=n-1$ following from our discussion above.  
Suppose for any $j$ with $i<j\leq n-1$, we have 
$(\Ad(z_{j}\dots z_{n-1})\cdot x)_{j}=x_{j}$.  Now
\begin{equation}\label{eq:cutoffs1}
(\Ad(z_{i}\dots z_{n-1})\cdot x)_{i}=[(\Ad(z_{i}\dots z_{n-1})\cdot x)_{i+1}]_{i}.
\end{equation}
Since $z_{i}\in G_{i}\subset G_{i+1}$, we have 
$$
(\Ad(z_{i}\dots z_{n-1})\cdot x)_{i+1}=\Ad(z_{i})\cdot( \Ad(z_{i+1}\dots z_{n-1})\cdot x)_{i+1}=\Ad(z_{i})\cdot x_{i+1}
$$
by the induction hypothesis.  
But then it follows from (\ref{eq:cutoffs1}) that 
$$
(\Ad(z_{i}\dots z_{n-1})\cdot x)_{i}=(\Ad(z_{i})\cdot  x_{i+1})_{i}=\Ad(z_{i})\cdot x_{i}=x_{i},
$$ 
since $z_{i}\in Z_{G_{i}}(x_{i})$, yielding (\ref{eq:icutoff}). 

Using Equation (\ref{eq:icutoff}), we now show that for any $i=2,\dots, n$, 
\begin{equation}\label{eq:adjoint}
\chi_{i}((\Ad(z_{2}\dots z_{n-1})\cdot x)_{i})=\chi_{i}(x_{i}),
\end{equation}
 where $\chi_{i}:\fg_{i}\to \fg_{i}//G_{i}$ is the adjoint quotient for $\fg_{i}$, and thus 
 $\Ad(z_{2}\dots z_{n-1})\cdot x\in \Phi^{-1}(\Phi(x))$ by the definition of the KW map in (\ref{eq:KWmap}).  
 We first note that (\ref{eq:adjoint}) is easily seen to be true for $i=n$, since $z_{2}\dots z_{n-1}\in G_{n-1}\subset G$.  
 For $i<n$, 
 \begin{equation}\label{eq:something}
 (\Ad(z_{2}\dots z_{i-1} z_{i}\dots z_{n-1})\cdot x)_{i}=\Ad(z_{2}\dots z_{i-1})\cdot (\Ad(z_{i}\dots z_{n-1})\cdot x)_{i}=\Ad(z_{2}\dots z_{i-1})\cdot x_{i}
\end{equation} 
by Equation (\ref{eq:icutoff}). 
Since $z_{2}\dots z_{i-1}\in G_{i-1}\subset G_{i}$, Equation (\ref{eq:something}) implies
$$
\chi_{i}((\Ad(z_{2}\dots z_{n-1})\cdot x)_{i})=\chi_{i}(\Ad(z_{2}\dots z_{i-1})\cdot x_{i})=\chi_{i}(x_{i}), 
$$
yielding (\ref{eq:adjoint}) in this case.

We now claim $\Psi$ is surjective.  Suppose $y\in\Phi^{-1}(\Phi(x))$.  Let $\Phi(x)=(c_{2},\dots, c_{n})\in \C^{r_{2}}\times\dots\times \C^{r_{n}}$.  Since $\chi_2$
is bijective, we see that $x_{2}=y_{2}$.  Further, $y_{3}, \, x_{3}\in \Phi_{3}^{-1}(c_{2}, c_{3})$, where $\Phi_{3}$ is the partial KW map for $\fg_{3}$.  
Since $x\in \fg_{\Theta}$, it follows from definitions that $\Phi_{3}^{-1}(c_{2}, c_{3})\subset \fg_{3}(0)$.  By part (2) of Corollary \ref{c:isgenericallyrad}, $y_{3}$, $x_{3}$ are $G_{2}=Z_{G_{2}}(x_{2})$-conjugate.  Let $z_{2}\in Z_{G_{2}}(x_{2})$ be such that $\Ad(z_{2})\cdot y_{3}=x_{3}$.  Let $y^{\prime}:=\Ad(z_{2})\cdot y$, so that
  $y^{\prime}_{3}=x_{3}$. Now observe that $y^{\prime}_{4},\, x_{4}\in \Phi_{4}^{-1}(c_{3}, c_{4})\subset \fg_{4}(0)$.  
  Again, by part (2) of Corollary \ref{c:isgenericallyrad}, $y_{4}^{\prime}$, $x_{4}$ are $G_{3}$-conjugate.  
  But since $y^{\prime}_{3}=x_{3}$, we see that $y_{4}^{\prime}$, $x_{4}$ are  $Z_{G_{3}}(x_{3})$-conjugate.  
  Let $z_{3}\in Z_{G_{3}}(x_{3})$ be such that $\Ad(z_{3})\cdot  y^{\prime}_{4}=x_{4}$.  
  Let $y^{\prime\prime}:=\Ad(z_{3} z_{2})\cdot y$.  Then we claim $y^{\prime\prime}_{4}=x_{4}$.  Indeed, 
  $$
  y_{4}^{\prime\prime}=(\Ad(z_{3}z_{2})\cdot y)_{4}=(\Ad(z_{3})\cdot y^{\prime})_{4}=\Ad(z_{3})\cdot y_{4}^{\prime}=x_{4}.
  $$
Continuing, in this fashion we can find $z_{2},\dots, z_{n-1}\in Z_{G_{2}}(x_{2}),\, \dots, Z_{G_{n-1}}(x_{n-1})$ 
respectively such that $x=\Ad(z_{n-1}\dots z_{2})\cdot y$.  It follows immediately that 
$\Psi(z_{2}^{-1},\dots, z_{n-1}^{-1})=y$  

We now show that $\Psi$ is injective.  Our main tool will be Lemma \ref{l:centralizers}.
Suppose we have $z_{i},\, \widetilde{z_{i}}\in Z_{G_{i}}(x_{i})$, for $i=2,\dots, n-1$ such that 
\begin{equation}\label{eq:firsteq}
\Ad(z_{2}\dots z_{n-1})\cdot x=\Ad(\widetilde{z_{2}}\dots \widetilde{z_{n-1}})\cdot x.
\end{equation}
Then $(\Ad(z_{2}\dots z_{n-1})\cdot x)_{3}=(\Ad(\widetilde{z_{2}}\dots\widetilde{z_{n-1}})\cdot x)_{3}$, 
which is equivalent to $\Ad(z_{2})\cdot x_{3}=\Ad(\widetilde{z_{2}})\cdot x_{3}$ by (\ref{eq:icutoff}).  
It follows that $z_{2}^{-1}\widetilde{z_{2}}\in Z_{G_{2}}(x_{2})\cap Z_{G_{3}}(x_{3})$. Since $x\in\fg_{\Theta}$, $x\in\fg_{sreg}$ by 
Proposition \ref{prop:orthofgtheta}, so that $x_{3}\in(\fg_{3})_{3-sreg}$ by Proposition \ref{prop:fullsreg}.  Lemma \ref{l:centralizers} now implies that 
$z_{2}=\widetilde{z_{2}}$.  
By induction, we may assume that $z_{3}=\widetilde{z_{3}}, \, z_{4}=\widetilde{z_{4}}, \dots, z_{i-1}=\widetilde{z_{i-1}}$, so that 
Equation (\ref{eq:firsteq}) becomes
$$
\Ad(z_{i}\dots z_{n-1})\cdot x=\Ad(\widetilde{z_{i}}\dots \widetilde{z_{n-1}})\cdot x.
$$
Using (\ref{eq:icutoff}) again, we have $\Ad(z_{i})\cdot x_{i+1}=\Ad(\widetilde{z_{i}})\cdot x_{i+1}$, yielding
$z_{i}^{-1}\widetilde{z_{i}}\in Z_{G_{i}}(x_{i})\cap Z_{G_{i+1}}(x_{i+1})$.  Lemma \ref{l:centralizers} again 
gives that $z_{i}=\widetilde{z_{i}}$.  By induction $\Psi$ is injective.

Finally, we show that $\Phi^{-1}(\Phi(x))$ is a nonsingular variety.  
 Let 
$\Phi(x)=(c_{i,j})_{i=2,\dots, n}^{j=1,\dots ,r_{i}} \in \C^{r_{2}}\times \dots\times \C^{r_{n}}.$  
Let $I_{c}\subset\C[\fg]$ be the ideal generated by the functions $g_{i,j}=f_{i,j}-c_{i,j}$ for 
$i=2,\dots, n$ and $j=1,\dots, r_{i}$.  
By Corollary \ref{c:thetafibre}, 
$\Phi^{-1}(\Phi(x))\subset\fg_{sreg}$.  It follows 
from the definition of strong regularity in Notation-Definition \ref{dfnote:sreg} that the differentials of the functions $g_{i,j}$ are independent at any 
point $y\in \Phi^{-1}(\Phi(x))$.  Again using Theorem 18.15 of \cite{Eis}, we
see that $y$ is a smooth point of $\Phi^{-1}(\Phi(x))$.
Thus, $\Psi: Z_{G_{2}}(x_{2}) \times \dots \times Z_{G_{n-1}}(x_{n-1})\to \Phi^{-1}(\Phi(x))$ is 
a bijective morphism of non-singular varieties.  It follows from Zariski's main theorem that $\Psi$ is a isomorphism.
\end{proof}

\subsection{Generic orbits of the GZ group $A$ on $\fg$}
The GZ integrable system in Equation (\ref{eq:GZfuns}) integrates to a global, holomorphic action of $A:=\C^{d}$ on 
$\fg$ where $$d=\frac{\dim \fg-r_{n}}{2}$$ is half the dimension of a regular $\Ad(G)$-orbit on $\fg$.  
To see this, one considers the Lie algebra of Hamiltonian GZ vector fields on $\fg$:
\begin{equation}\label{eq:GZvecfields}
\fa_{GZ}=\mbox{span}\{\xi_{f}:\, f\in J_{GZ}\}.
\end{equation}
For $f\in J_{GZ}$, the Hamiltonian vector field $\xi_{f}$ is complete and integrates to a global 
action of $\C$ on $\fg$ (see Theorem 2.4,\cite{Col2}).  Since the functions $J_{GZ}$ Poisson commute, the Lie algebra 
$\fa_{GZ}$ is abelian, and therefore the global flows of the vector fields $\xi_{f}$ simultaneously integrate to 
give a holomorphic action of $A$ on $\fg$ (see Section 2.3, \cite{Col2} for details).  We refer to the group $A$ 
as the Gelfand-Zeitlin group (or GZ) group (see \cite{Col1},\cite{CEexp}).  Since the Lie algebra 
$\fa_{GZ}$ is commutative, the GZ functions $f_{i,j}\in J_{GZ}$, $i=2,\dots, n,\,  j=1,\dots, r_{i}$ are invariant under one 
another's Hamiltonian flows, whence the action of $A$ preserves the fibres of the KW map $\Phi$.  In fact, Theorem 
2.14 of \cite{Col2} implies that the irreducible components of the strongly regular fibre $\Phi^{-1}(\Phi(x))_{sreg}$ are precisely the
orbits of $A$ on the fibre $\Phi^{-1}(\Phi(x))$.  Theorem \ref{thm:genericfibre} can be now 
be used to describe the action of the GZ group $A$ on the fibres $\Phi^{-1}(\Phi(x))$ for $x\in\fg_{\Theta}$.
\begin{thm}\label{thm:GZgroup}
Let $x\in\fg_{\Theta}$, and let $Z_{G_{i}}(x_{i})^{0}$ be the 
identity component of the group $Z_{G_{i}}(x_{i})$.  Let 
$$\mathcal{Z}:=\displaystyle\prod_{i=2}^{n-1} Z_{G_{i}}(x_{i})^{0}.$$
Then the $A$-orbits on $\Phi^{-1}(\Phi(x))$ coincide with orbits of a free, algebraic action of the 
connected, abelian algebraic group $\mathcal{Z}$ on $\Phi^{-1}(\Phi(x))$.  
In particular, the Lie algebra of GZ vector fields in (\ref{eq:GZvecfields}) is  algebraically integrable 
on the fibre $\Phi^{-1}(\Phi(x))$.  
\end{thm}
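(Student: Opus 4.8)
The plan is to push everything through the variety isomorphism $\Psi\colon \prod_{i=2}^{n-1}Z_{G_{i}}(x_{i})\to\Phi^{-1}(\Phi(x))$ of Theorem \ref{thm:genericfibre}, and to identify the $A$-orbits on $\Phi^{-1}(\Phi(x))$ with the irreducible components of the fibre. Since $x\in\fg_{\Theta}\subset\fg_{sreg}$, each $x_{i}$ is regular in $\fg_{i}$ by (\ref{eq:totreg}), so by Kostant's theorem that the centralizer of a regular element of a reductive Lie algebra is abelian (\cite{Kostant63}), $\fz_{\fg_{i}}(x_{i})$ is abelian of dimension $r_{i}$; hence the connected algebraic group $Z_{G_{i}}(x_{i})^{0}$, which has Lie algebra $\fz_{\fg_{i}}(x_{i})$, is abelian. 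Therefore $\mathcal{Z}=\prod_{i=2}^{n-1}Z_{G_{i}}(x_{i})^{0}$ is a connected abelian algebraic group, and along the way $\dim\mathcal{Z}=\sum_{i=2}^{n-1}r_{i}=d$ by (\ref{eq:sumri}).

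Next I would construct the action. Transport the group structure of $\prod_{i=2}^{n-1}Z_{G_{i}}(x_{i})$ to $\Phi^{-1}(\Phi(x))$ via $\Psi$, and let $\mathcal{Z}$ act by left translation: for $g\in\mathcal{Z}$ and $y\in\Phi^{-1}(\Phi(x))$ set $g\cdot y:=\Psi\bigl(g\,\Psi^{-1}(y)\bigr)$, where $g\,\Psi^{-1}(y)$ is multiplication in the product group. Since multiplication in an algebraic group is a morphism and $\Psi,\Psi^{-1}$ are morphisms, this is an algebraic action, and it is free because left translation in a group is free. As $\mathcal{Z}$ is the identity component of $\prod_{i=2}^{n-1}Z_{G_{i}}(x_{i})$ (the identity component of a product being the product of identity components), it is a normal subgroup, so its left cosets coincide with its right cosets and with the connected components of $\prod_{i=2}^{n-1}Z_{G_{i}}(x_{i})$; since that group is smooth, these are also its irreducible components. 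Transporting via $\Psi$, the orbits of the $\mathcal{Z}$-action on $\Phi^{-1}(\Phi(x))$ are precisely the irreducible components of the fibre.

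Finally I would match these with the $A$-orbits. By Corollary \ref{c:thetafibre}, $\Phi^{-1}(\Phi(x))=\Phi^{-1}(\Phi(x))_{sreg}$, so Theorem 2.14 of \cite{Col2} (quoted above) identifies the irreducible components of $\Phi^{-1}(\Phi(x))$ with the $A$-orbits on the fibre. Combining this with the previous step, the $A$-orbits coincide with the orbits of the free algebraic action of $\mathcal{Z}$, which proves the first assertion. The ``in particular'' clause is then immediate: the foliation of $\Phi^{-1}(\Phi(x))$ by the Lie algebra of GZ vector fields $\fa_{GZ}$ in (\ref{eq:GZvecfields}), whose leaves are the $A$-orbits, is the foliation by orbits of an algebraic action of the connected abelian algebraic group $\mathcal{Z}$ (the leaves having the expected dimension $\dim\mathcal{Z}=d$), which is the meaning of algebraic integrability.

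The main obstacle here is slight, since essentially all of the geometry is already contained in Theorem \ref{thm:genericfibre}. The two points that need care are (i) invoking Kostant's abelian-centralizer theorem and passing from the Lie algebra $\fz_{\fg_{i}}(x_{i})$ to the connected group $Z_{G_{i}}(x_{i})^{0}$ to get that $\mathcal{Z}$ is abelian, and (ii) ensuring the orbit-matching uses the $\fg_{\Theta}$ hypothesis only through Corollary \ref{c:thetafibre} together with the quoted description of $\Phi^{-1}(\Phi(x))_{sreg}$ from \cite{Col2}.
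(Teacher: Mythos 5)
Your proposal is correct and follows essentially the same route as the paper: transport the group action via the isomorphism $\Psi$ of Theorem \ref{thm:genericfibre}, identify $\mathcal{Z}$-orbits with irreducible components, and match those to $A$-orbits using Corollary \ref{c:thetafibre} together with Theorem 2.14 of \cite{Col2}. The only differences are that you spell out the abelianness of $\mathcal{Z}$ via Kostant's regular-centralizer theorem and the coset/component identification for $\mathcal{Z} \subset \prod Z_{G_i}(x_i)$, both of which the paper leaves implicit in the statement; these are welcome additions rather than deviations.
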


\begin{proof}
By Theorem \ref{thm:genericfibre}, the morphism 
$$
\Psi: \displaystyle\prod_{i=2}^{n-1} Z_{G_{i}}(x_{i})\to \Phi^{-1}(\Phi(x)) \mbox{ given by } \Psi(z_{2},\dots, z_{n})=\Ad(z_{2})\cdots \Ad(z_{n-1})\cdot x.
$$
is an isomorphism of varieties (cf. Equation (\ref{eq:Psidefn})).  
Therefore, we can use $\Psi$ to define an action of $\mathcal{Z}$ on $\Phi^{-1}(\Phi(x))$ via 
\begin{equation}\label{eq:actdefn}
(z_{2},\dots, z_{n})\cdot y:=\Psi((z_{2},\dots, z_{n})\circ \Psi^{-1}(y)), 
\end{equation}
where $\circ$ in the above equation represents multiplication in the group 
$\prod_{i=2}^{n-1} Z_{G_{i}}(x_{i})$.  The action of $\mathcal{Z}$ in (\ref{eq:actdefn}) is clearly 
algebraic and free and its orbits are the irreducible components of $\Phi^{-1}(\Phi(x))$, since $\Psi$ is an isomorphism of varieties.  
Since $\Phi^{-1}(\Phi(x))=\Phi^{-1}(\Phi(x))_{sreg}$ by Corollary \ref{c:thetafibre}, the orbits of $\mathcal{Z}$ are then precisely the 
orbits of the GZ group $A$ on $\Phi^{-1}(\Phi(x))$ by our remarks above.
\end{proof}

Using Theorem \ref{thm:genericfibre}, we can count the number of irreducible components (and hence the number of $A$-orbits)
in $\Phi^{-1}(\Phi(x))$ for $x\in\fg_{\Theta}$. 
\begin{cor}\label{c:counting}
 Let $\fg=\fso(n)$ with $n> 3$, and let $x\in\fg_{\Theta}$.  
 Let $m$ be the number of indices $i$, $i=4,\dots, n-1$, satisfying the following two conditions:
  \begin{enumerate}
 \item $i$ is even.
 \item zero occurs as an eigenvalue of $x_{i}$ with multiplicity at least 4.
 \end{enumerate}
 Then $m\in \{0,\dots, r_{n-1} - 1\}$ and the number of irreducible components of
 $\Phi^{-1}(\Phi(x))$ is $2^{m}$.
 
Further, for any $m\in\{0,\dots, r_{n-1} - 1\}$, there exists an $x\in\fg_{\Theta}$ with the property 
that $\Phi^{-1}(\Phi(x))$ contains $2^{m}$ irreducible components. 
 \end{cor}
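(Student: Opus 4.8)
\emph{Proof proposal.} The plan is to combine the parametrization of the fibre in Theorem \ref{thm:genericfibre} with a computation of the component groups of the centralizers $Z_{G_i}(x_i)$. By Theorem \ref{thm:genericfibre}, $\Psi$ is an isomorphism of varieties from $\prod_{i=2}^{n-1} Z_{G_i}(x_i)$ onto $\Phi^{-1}(\Phi(x))$. Over $\C$ the irreducible components of a product of varieties are the products of the irreducible components of the factors, and the irreducible components of a linear algebraic group are exactly the cosets of its identity component; hence the number of irreducible components of $\Phi^{-1}(\Phi(x))$ equals $\prod_{i=2}^{n-1}\lvert Z_{G_i}(x_i)/Z_{G_i}(x_i)^0\rvert$. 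So everything reduces to computing $\lvert Z_{G_i}(x_i)/Z_{G_i}(x_i)^0\rvert$ for each $i$.

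Next I would analyze $Z_{G_i}(x_i)=Z_{SO(i)}(x_i)$ for $x_i$ regular in $\fg_i=\fso(i)$ (recall $x\in\fg_{\Theta}\subset\fg_{sreg}$, so each $x_i$ is regular by Proposition \ref{prop:orthofgtheta} and (\ref{eq:totreg})). Decompose $\C^i=\bigoplus_\mu V(\mu)$ into generalized $x_i$-eigenspaces; the form $\beta$ pairs $V(\mu)$ with $V(-\mu)$ and restricts nondegenerately to $V(0)$. Any element of $Z_{O(i)}(x_i)$ preserves each $V(\mu)$; on a pair $V(\mu)\oplus V(-\mu)$ with $\mu\neq 0$ it is determined by its restriction to $V(\mu)$, which ranges over $Z_{GL(V(\mu))}(x_i|_{V(\mu)})$, a connected group since regularity of $x_i$ forces $x_i|_{V(\mu)}$ to be a single Jordan block; on $V(0)$ it lies in $Z_{O(V(0))}(x_i|_{V(0)})$, where $x_i|_{V(0)}$ is a regular nilpotent element $e$ of $\fso(V(0))\cong\fso(d_0)$, $d_0:=\dim V(0)$. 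Since the determinant of such an element equals the determinant of its $V(0)$-component, one obtains $Z_{SO(i)}(x_i)\cong(\text{connected})\times Z_{SO(d_0)}(e)$. A short computation with the structure of centralizers of nilpotents in orthogonal groups (the reductive part being built from $O(\cdot)$ factors on the odd Jordan blocks, together with the determinant-one condition) then shows: $Z_{SO(d_0)}(e)$ is connected when $d_0$ is odd and when $d_0\in\{0,2\}$, while for $d_0$ even with $d_0\geq 4$ (where $e$ has Jordan type $[d_0-1,1]$) it has component group of order two. Since $i-d_0$ is even, $d_0$ has the parity of $i$, so $\lvert Z_{G_i}(x_i)/Z_{G_i}(x_i)^0\rvert=2$ precisely when $i$ is even and $0$ is an eigenvalue of $x_i$ of multiplicity at least $4$, and $=1$ otherwise. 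This is the step I expect to be the main obstacle: pinning down the precise component group, in particular correctly tracking the passage from $O(d_0)$ to $SO(d_0)$.

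Combining the two steps gives that the number of irreducible components of $\Phi^{-1}(\Phi(x))$ is $2^m$ with $m$ as in the statement (the index $i=2$ never contributes since then $d_0\leq 2$, and odd $i$ never contribute). For the bound $m\leq r_{n-1}-1$ I would simply count: the even integers in $\{4,\dots,n-1\}$ are $4,6,\dots,2\lfloor(n-1)/2\rfloor$, a set of size $\lfloor(n-1)/2\rfloor-1=r_{n-1}-1$, and $m$ is at most its size.

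Finally, for the realizability claim, fix $m\in\{0,\dots,r_{n-1}-1\}$, list the even integers $i_1<\dots<i_{r_{n-1}-1}$ in $\{4,\dots,n-1\}$, and declare $i_1,\dots,i_m$ ``distinguished''. For each level $j=2,\dots,n$ I would prescribe a polynomial $p_j$ that is a valid characteristic polynomial of an element of $\fso(j)$, with $0$ a root of multiplicity exactly $4$ if $j\in\{i_1,\dots,i_m\}$, exactly $1$ if $j$ is odd, and $0$ if $j$ is even and not distinguished, choosing all remaining (necessarily nonzero) roots generically so that $p_j$ and $p_{j+1}$ have disjoint root sets. This is possible because distinguished even levels are pairwise non-adjacent, so $0$ never lies in $\sigma(x_j)\cap\sigma(x_{j+1})$, while the nonzero roots are free parameters. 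Since $\Phi$ is surjective (Theorem \ref{thm:surj}) and the invariant values at level $j$ are equivalent data to $p_j$ (Remark \ref{r:spectrum}), there is $x\in\fg$ with $p_j=$ the characteristic polynomial of $x_j$ for all $j$; by construction $\sigma(x_j)\cap\sigma(x_{j+1})=\emptyset$, so $x\in\fg_{\Theta}$ by (\ref{eq:fgtheta}), and the first part shows $\Phi^{-1}(\Phi(x))$ has exactly $2^m$ irreducible components.
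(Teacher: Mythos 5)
Your proposal is correct and follows essentially the same route as the paper: both use Theorem \ref{thm:genericfibre} to reduce to counting connected components of $\prod_{i=2}^{n-1} Z_{G_i}(x_i)$, compute the component group of each $Z_{G_i}(x_i)$ from the Jordan structure of the regular element $x_i$ (the paper phrases this via the centre of the Levi subgroup $Z_{G_i}(s_i)$ from Lemma \ref{l:centralizers}, you phrase it via the eigenspace decomposition and the nilpotent centralizer in $SO(d_0)$, but these are the same structure theory), and then invoke surjectivity of $\Phi$ to realize each value of $m$. The only cosmetic difference is that the paper realizes $m$ by taking $x_{i_j}$ regular nilpotent at the distinguished type-$D$ levels, while you prescribe $0$ with multiplicity exactly $4$ there and spell out the disjoint-spectrum check in more detail.
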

\begin{proof}
It follows immediately from the isomorphism in Theorem \ref{thm:genericfibre} that the number of 
irreducible components of $\Phi^{-1}(\Phi(x))$ for $x\in\fg_{\Theta}$ coincides with the number of 
irreducible (i.e. connected) components of the product of algebraic groups $Z_{G_{2}}(x_{2})\times\dots\times Z_{G_{n-1}}(x_{n-1})$.  
We compute the number of components of $Z_{G_{i}}(x_{i})$ by considering the Jordan decomposition of $x_{i}\in\fg_{i}$, $x_{i}=s_{i}+n_{i}$ 
with $s_{i}$ semisimple and $n_{i}$ nilpotent.  Then $Z_{G_{i}}(x_{i})=Z_{L_{i}}(n_{i})$, where $L_{i}:=Z_{G_{i}}(s_{i})$ is a Levi subgroup of $G_{i}$.  
Since $x\in\fg_{\Theta}\subset \fg_{sreg}$ by Proposition \ref{prop:orthofgtheta}, $x_{i}$ is regular for all $i=2,\dots, n$ by (\ref{eq:totreg}), so that 
$n_{i}\in \fl_{i}=\mbox{Lie}(L_{i})$ is regular nilpotent.  Thus, $Z_{L_{i}}(n_{i})\cong Z_{i}\times U_{i}$, where $Z_{i}$ is the centre of $L_{i}$ and $U_{i}$ is a unipotent subgroup of $L_{i}$.  Suppose $i>2$ is even.  As we observed in the proof of Lemma \ref{l:centralizers}, up to $G_{i}$-conjugacy the Levi subgroup $L_{i}$ is either a product of subgroups $GL(m_{j})$ in which case $Z_{i}$ is connected, or $L$ is a product of $GL(s_{j})$ and precisely one factor of the form $SO(2k)$ with $k>1$, in which case $Z_{i}$ has exactly two components (see Equations (\ref{eq:Levis}) and (\ref{eq:Levis2})).  
The latter case occurs if and only if zero occurs as an eigenvalue of $s_{i}$ of 
multiplicity at least $4$.  On the other hand, if $i$ is odd, a similar argument shows that $Z_{G_{i}}(x_{i})$ is always a connected, algebraic group.  
Lastly, if $i=2$, the group $Z_{G_{2}}(x_{2})=G_{2}$ is connected. 

The upper bound on the value of $m$ follows from an easy calculation that there are precisely $r_{n-1}-1$ subalgebras of type $D$ in the chain
$\fg_{3}\subset \dots\subset\fg_{n-1}$. 

To prove the second statement of the theorem, suppose we are given $m\in\{0,\dots, r_{n-1}-1\}$.  Then we can find $m$ subalgebras 
$\fg_{i_{1}}\subset\fg_{i_{2}}\subset\dots\subset\fg_{i_{m}}$ of type $D$ in the chain $\fg_{3}\subset\dots\subset\fg_{n-1}$.  It follows 
from the surjectivity of the KW map (Theorem 3.6) that we can find $x\in\fg_{\Theta}$ with $x_{i_{j}}$ regular nilpotent in $\fg_{i_{j}}$ for all $j=1,\dots, m$.  
It follows that $\Phi^{-1}(\Phi(x))$ contains $2^{m}$ irreducible components from the first statement of the theorem.

\end{proof}

\begin{rem}\label{r:fgomega}
Theorems \ref{thm:genericfibre} and \ref{thm:GZgroup} improve on results
from \cite{Col2} and \cite{Col1}.  In \cite{Col2}, the first author
proved Theorem \ref{thm:genericfibre} for the elements $x$ of $\fg$ such
that $x_i$ is regular semisimple for $i=2, \dots, n$ and $x_i$ and $x_{i+1}$
have disjoint spectra for $i=2, \dots, n-1$.  In this case, the fibres
$\Phi^{-1}(\Phi(x))$ are isomorphic to a product of tori.
   Our result here is more
general and has a simpler proof.  In \cite{Col1}, the first author proved
the analogue of Theorem \ref{thm:GZgroup} for $\fg = \fgl(n)$.  Our
proof here carries over to $\fgl(n)$ with minor modifications, and is 
simpler than the proof in \cite{Col1}.
\end{rem}

\section{The orthogonal KW nilfibre and strongly regular elements}\label{s:KWnilfibre}
In this section, we show that $\Phi^{-1}(0)_{sreg}=\emptyset$ (see Proposition \ref{p:nosreg}).
This stands in stark contrast to the situation in the general linear setting where the rich structure of $\Phi^{-1}(0)_{sreg}$ is studied 
extensively in \cite{CEKorbs}.  As with the generic fibres of $\Phi$ studied in the previous sections, our study 
of $\Phi^{-1}(0)$ begins with studying the nilfibre of the partial KW map $\Phi_{n}^{-1}(0)$.

\subsection{Structure of Partial KW nilfibre}\label{ss:partialKWnilfibre}

Our goal in this section is to describe the structure of 
the nilfibre $\Phi_{n}^{-1}(0)$  of the partial KW map  (see Notation \ref{nota:partialnil}).  
This will be achieved by degenerating a generic fibre $\Phi_{n}^{-1}(\Phi_{n}(x))$
 to $\Phi_{n}^{-1}(0)$ together with the Luna slice theorem.

We recall the basic ingredients of Luna's slice theorem.
Let $M$ be a complex reductive algebraic group acting linearly 
on a finite dimensional vector space $V$.  Let $M\cdot v$ be a closed $M$-orbit
in $V$ and note that the stabilizer $M_{v}$ is reductive.  Hence,
we may find a representation $\fs$ of $M_{v}$ such that
$V \cong T_{v}(M\cdot v)\oplus \fs$ as $M_{v}$-modules.   The 
representation $\fs$ is called the slice representation at $v$.
Let $q:\fs\to \fs// M_{v}$ be the GIT quotient for the slice representation, and let $\N_{\fs}:=q^{-1}(0) = \{ y \in \fs : 0 \in \overline{M\cdot y}\}$ be the nullcone 
of the slice representation. Consider the GIT quotient 
$\pi: V\to V//M$ for the $M$-action on $V$.   The Luna slice theorem
asserts that the $M$-equivariant morphism 
\begin{equation}\label{eq:Lunaiso}
M\times_{M_{v}}(v+\N_{\fs})\stackrel{\cong}{\to} \pi^{-1}(\pi(v))\mbox{ given by } (g, v+n)\mapsto g\cdot (v+n)
\end{equation}
is an isomorphism, where $g \in M$ and $n\in\N_{\fs}$ (Theorem 6.6 of \cite{VP},  \cite{Luna}).  In particular, 
the $M$-orbits on $\pi^{-1}(\pi(v))$ are in one-to-one correspondence with the 
$M_{v}$-orbits in $\N_{\fs}$. 

To apply this construction to the $K=SO(n-1)$-action on $\fg=\fso(n)$, we recall
some facts about the involution $\theta$ in Section \ref{ss:symmetricreal}.  
Suppose that $\fb\in\B$ is $\theta$-stable, i.e., $\theta(\fb)=\fb$, and let $\ft\subset\fb$ be a $\theta$-stable Cartan subalgebra of $\fg$.  Then the corresponding Borel subgroup $B\subset G$ with $\mbox{Lie}(B)=\fb$ is also $\theta$-stable and contains the $\theta$-stable Cartan subgroup $T\subset G$ with $\mbox{Lie}(T)=\ft$.  It follows from Lemma 5.1 of \cite{Rich} 
that $B\cap K$ is a Borel subgroup of $K$ and $T\cap K$ is a Cartan subgroup of $K$.  
Hence, $\fb\cap\fk$ is a Borel subalgebra of $\fk$ with nilradical $\fn\cap\fk=[\fb\cap\fk,\fb\cap\fk]$, and $\fb\cap\fk$ contains the Cartan subalgebra $\ft\cap\fk$ of $\fk$. 

Recall the notion of a standard Borel subalgebra in Definition \ref{dfn:std}.  
We will make use of the following notation throughout this section. 

\begin{nota}\label{nota:thetastable}
Let $\B^{\theta}\subset\B$ denote the set of $\theta$-stable Borel subalgebras of 
$\fg$.  We denote by $\B^{\theta}_{std}\subset \B^{\theta}$ the subset of standard $\theta$-stable Borel subalgebras 
of $\fg$.  Similarly, we let 
\begin{equation*}
\mathcal{N}^{\theta}:=\{\fn=[\fb,\fb]\mbox{ with } \fb\in\B^{\theta}\}, 
\end{equation*}
and 
$$\mathcal{N}^{\theta}_{std}:=\{ \fn=[\fb,\fb]\mbox{ with } \fb\in\B^{\theta}_{std}\}$$
be the collections of the $\theta$-stable nilradicals and $\theta$-stable, standard nilradicals respectively.  
Note that the standard Borel subalgebra of upper triangular matrices $\fb_{+}$ in $\fg$  
belongs to the set $\B^{\theta}_{std}$.   
\end{nota}

 Recall that $\fh\subset\fg$ denotes the standard Cartan
subalgebra of diagonal matrices and that $\fh$ is preserved by $\theta$.  Since $\fb_{+}\in \B^{\theta}_{std}$, it follows
that $\fh\cap\fk$ is a Cartan subalgebra of $\fk$ with corresponding Cartan subgroup $H\cap K$ of $K$.  
Let $(\fh\cap\fk)_{reg}$ denote the regular semisimple elements of $\fk$ in $\fh \cap \fk$.   Then
for $x\in (\fh \cap \fk)_{reg}$, the orbit $\Ad(K)\cdot x$ is closed in $\fk$,
and thus in $\fg$, and the isotropy group of $x$ is the Cartan subgroup $H\cap K$ of $K$.  Let $\Phi_{\fk}$ be the
$\fh \cap \fk$-roots in $\fk$.   Then we may identify 
$T_{x}(\Ad(K)\cdot x)=[\fk,x] = \oplus_{\alpha \in \Phi_{\fk}} \fk_{\alpha}$.
Let $\fs=\fh + \fg^{-\theta}$, so that $\fs$ is
 a $H\cap K$-slice to $\Ad(K)\cdot x$ in $\fg$.   Note that the $H\cap K$-nullcone
$\N_{\fs}=\N_{\fg^{-\theta}}$, since $H\cap K$ acts trivially on $\fh$.

We compute $\N_{\fg^{-\theta}}$ by considering type $B$ and type $D$ separately.
Let $\fg=\fso(2l+1)$ and recall from 
 Example \ref{ex:roottypes} that $H\cap K = H$ and 
$\fg^{-\theta}=\oplus_{i=1}^{l} \fg_{\pm \eps_{i}}$.     An element of $H\cap K$
is $h=\mbox{diag}[h_{1},\dots, h_{l} ,1, h_{l}^{-1}, \dots, h_{1}^{-1}]$ with $h_{i}\in\C^{\times}$, and note that if $e_{\pm\eps_{i}}$ a root vector of $\fg_{\pm\eps_{i}}$, then $\Ad(h)\cdot e_{\pm\eps_{i}} = h_i^{\pm 1} e_{\pm \eps_{i}}$.   Hence, if 
$x=\sum_{i=1}^{l} \lambda_{i} e_{\eps_{i}} + \mu_{i} e_{-\eps_{i}}$ with $\lambda_i, \mu_i \in \C$, then
\begin{equation*}
\Ad(h)\cdot x=\displaystyle \sum_{i=1}^{l} \lambda_{i} h_{i} e_{\eps_{i}}+ \mu_{i} h_{i}^{-1} e_{-\eps_{i}}. 
\end{equation*}
It follows that
\begin{equation*}
0\in \overline{\Ad(H)\cdot x} \mbox{ if and only if } \mu_{i}\lambda_{i}=0 \mbox{ for all } i=1,\dots, l. 
\end{equation*}
Now let $\fg=\fso(2l)$, and note that since $\theta(\eps_{l})=-\eps_{l}$
 by Example \ref{ex:roottypes},  
$$
\fg^{-\theta}= \fh^{-\theta} \oplus \displaystyle \bigoplus_{i=1}^{l-1} (\fg_{\eps_{i}-\eps_{l}}\oplus \fg_{\eps_{i}+ \eps_{l}})^{-\theta}\oplus \displaystyle\bigoplus_{i=1}^{l-1}(\fg_{-(\eps_{i}-\eps_{l})}\oplus \fg_{-(\eps_{i}+\eps_{l})})^{-\theta}.
$$
Let $f_{\pm i}$ be a root vector of $\fg_{\pm(\eps_{i}-\eps_{l})}$ for $i=1,\dots, l-1$, 
so that $\fg^{-\theta} = \fh^{-\theta} + \sum_{i=1}^{l-1} \C (f_{\pm i} -\theta(f_{\pm i}))$.    Elements of $H\cap K$ are of the form $h=\mbox{diag}[h_{1},\dots, h_{l-1}, 1,1, h_{l-1}^{-1},\dots, h_{1}^{-1}]\subset H$ with $h_{i}\in \C^{\times}.$
Then $\Ad(h)\cdot (f_{\pm i} - \theta(f_{\pm i}))=h_i^{\pm 1} (f_{\pm i} - \theta(f_{\pm i}))$.  
Thus, if $$x=x_{\fh^{-\theta}}+\sum_{i=1}^{l-1} \lambda_{i}(f_{i}-\theta(f_{i}))+ \mu_{i}(f_{-i}-\theta(f_{-i})),$$ with $x_{\fh^{-\theta}}\in\fh^{-\theta}$, then 
\begin{equation}\label{eq:typeDact}
\Ad(h)\cdot x=x_{\fh^{-\theta}}+\displaystyle\sum h_{i}\lambda_{i}(f_{i}-\theta(f_{i}))+h_{i}^{-1} \mu_{i}(f_{-i}-\theta(f_{-i})),
\end{equation}
since $H\cap K$ acts trivially on $\fh^{-\theta}$.  
It follows that
  \begin{equation*}
0\in \overline{\Ad(H\cap K)\cdot x} \mbox{ if and only if } \mu_{i}\lambda_{i}=0 \mbox{ for all } i=1,\dots, l-1, \mbox{ and } x_{\fh^{-\theta}}=0.
\end{equation*}
To describe the irreducible components of $\N_{\fg^{-\theta}}$, introduce symbols
$U$ and $L$ for upper and lower.  For a $r_{n-1}$-tuple of symbols 
$(i_{1},\dots, i_{r_{n-1}})$ where $i_{j}=U$ or $i_{j}=L$ for $j=1,\dots, r_{n-1}$, define
\begin{equation}\label{eq:Cdefn}
\mathcal{C}_{i_{1},\dots, i_{r_{n-1}}}:=\{x\in \fg^{-\theta}: \mu_{j}=0\mbox { if } i_{j}=U,\, \lambda_{j}=0 \mbox{ if } i_{j}=L,\mbox{ and } x_{\fh^{-\theta}}=0 \mbox{ if }\fg=\fso(2l)\}.
\end{equation}
We have proved:

\begin{lem}\label{lem:nullconeirredcomponets}
The irreducible components of $\N_{\fg^{-\theta}}$ are the $2^{r_{n-1}}$ varieties
$\mathcal{C}_{i_{1},\dots, i_{r_{n-1}}} \cong \C^{r_{n-1}}$.   They are indexed by
the choices of $r_{n-1}$-tuples $(i_{1}, \dots, i_{r_{n-1}})$, with $i_{j}=U$ or $i_{j}=L$ for $j=1,\dots, r_{n-1}$. 
\end{lem}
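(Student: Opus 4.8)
The plan is to read the lemma directly off the explicit description of the $H\cap K$-action on $\fg^{-\theta}$ established just above. In both the type $B$ case ($\fg = \fso(2l+1)$, where $r_{n-1} = l$) and the type $D$ case ($\fg = \fso(2l)$, where $r_{n-1} = l-1$), that computation exhibits linear coordinates $\lambda_1, \dots, \lambda_{r_{n-1}}, \mu_1, \dots, \mu_{r_{n-1}}$ on $\fg^{-\theta}$ (the coefficients of the $e_{\pm\eps_i}$, resp. of the $f_{\pm i} - \theta(f_{\pm i})$), together with the coordinate $x_{\fh^{-\theta}}$ on the one-dimensional space $\fh^{-\theta}$ in type $D$, in terms of which $\N_{\fg^{-\theta}}$ is the common zero locus of the functions $\lambda_i\mu_i$ for $i = 1, \dots, r_{n-1}$ (and of the coordinate on $\fh^{-\theta}$ in type $D$). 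So up to this linear change of coordinates, $\N_{\fg^{-\theta}}$ is the subvariety of $\C^{2r_{n-1}}$ defined by the decoupled system of monomial equations $\lambda_i\mu_i = 0$, and the lemma is the standard decomposition of that variety.

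First I would observe that, for each $r_{n-1}$-tuple $(i_1, \dots, i_{r_{n-1}})$ of symbols with $i_j \in \{U, L\}$, the set $\mathcal{C}_{i_1, \dots, i_{r_{n-1}}}$ of (\ref{eq:Cdefn}) is the linear subspace of $\fg^{-\theta}$ obtained by imposing $\mu_j = 0$ when $i_j = U$, $\lambda_j = 0$ when $i_j = L$, and $x_{\fh^{-\theta}} = 0$ in type $D$; since exactly one of $\lambda_j, \mu_j$ survives for each $j$, this subspace is isomorphic to $\C^{r_{n-1}}$ and is in particular irreducible and closed, and distinct tuples yield distinct subspaces. Each $\lambda_j\mu_j$ vanishes identically on $\mathcal{C}_{i_1, \dots, i_{r_{n-1}}}$, so $\mathcal{C}_{i_1, \dots, i_{r_{n-1}}} \subseteq \N_{\fg^{-\theta}}$. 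Conversely, if $x \in \N_{\fg^{-\theta}}$ then for each $j$ at least one of $\lambda_j(x), \mu_j(x)$ is zero, and choosing $i_j = U$ precisely when $\mu_j(x) = 0$ places $x$ in the corresponding $\mathcal{C}_{i_1, \dots, i_{r_{n-1}}}$. Hence $\N_{\fg^{-\theta}}$ is the union of the $2^{r_{n-1}}$ subspaces $\mathcal{C}_{i_1, \dots, i_{r_{n-1}}}$, each irreducible, closed, and isomorphic to $\C^{r_{n-1}}$.

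Next I would check that no subspace in this list is redundant. If some $\mathcal{C} = \mathcal{C}_{i_1, \dots, i_{r_{n-1}}}$ were contained in the union of the others, then by irreducibility of $\mathcal{C}$ it would lie in a single one of them, say $\mathcal{C}'$; but $\mathcal{C}$ and $\mathcal{C}'$ are distinct linear subspaces of the common dimension $r_{n-1}$, so neither can contain the other, a contradiction. Thus each $\mathcal{C}_{i_1, \dots, i_{r_{n-1}}}$ is a maximal irreducible closed subset of $\N_{\fg^{-\theta}}$, i.e.\ an irreducible component, and the displayed list exhausts the components. Counting tuples gives $2^{r_{n-1}}$ of them, which is the assertion.

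There is no real difficulty here: all of the substance lies in the weight computations preceding the statement, which reduce $\N_{\fg^{-\theta}}$ to the zero set of the decoupled system $\{\lambda_i\mu_i = 0\}$. The only point requiring a little care is the type $D$ bookkeeping --- remembering that the coordinate $x_{\fh^{-\theta}}$ must be set to zero and that the number of surviving coordinate pairs is $l - 1 = r_{n-1}$, not $l$ --- but this is immediate from the structure of $\fg^{-\theta}$ recorded above.
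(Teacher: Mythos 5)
Your proposal is correct and follows essentially the same route as the paper: the lemma is read directly off the explicit coordinate description of the $H\cap K$-action on $\fg^{-\theta}$ computed just above, which exhibits $\N_{\fg^{-\theta}}$ as the zero locus of the decoupled monomial system $\{\lambda_j\mu_j=0\}$ (together with $x_{\fh^{-\theta}}=0$ in type $D$). The only difference is that you spell out the elementary verification, which the paper leaves implicit with the phrase ``We have proved,'' that the $\mathcal{C}_{i_1,\dots,i_{r_{n-1}}}$ are irreducible, cover $\N_{\fg^{-\theta}}$, and that no one of these equal-dimensional linear subspaces contains another, hence they are precisely the irreducible components.
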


We now give a Lie-theoretic description of the components
$\mathcal{C}_{i_{1},\dots, i_{r_{n-1}}}$.

\begin{lem}\label{l:dim}
Let $\fb\in \B^{\theta}$ with
nilradical $\fn$.  Then 
$\dim \fn^{-\theta}=r_{n-1}=\mbox{\emph{rank}}(\fk)$.
\end{lem}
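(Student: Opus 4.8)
The plan is to reduce the claim to a dimension count, using the $\theta$-eigenspace decomposition of $\fn$ together with the fact, recorded in the discussion preceding Notation~\ref{nota:thetastable}, that the intersection of a $\theta$-stable Borel with $\fk$ is a Borel subalgebra of $\fk$.

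First I would note that since $\fb\in\B^{\theta}$, its nilradical $\fn=[\fb,\fb]$ is again $\theta$-stable. As $\theta$ is an involution and $\fg=\fk\oplus\fp$ is the decomposition into its $(+1)$- and $(-1)$-eigenspaces, the restriction of $\theta$ to $\fn$ is diagonalizable with eigenvalues $\pm 1$, so $\fn=(\fn\cap\fk)\oplus\fn^{-\theta}$ with $\fn^{-\theta}=\fn\cap\fp$; hence $\dim\fn^{-\theta}=\dim\fn-\dim(\fn\cap\fk)$. Next I would invoke the consequence of Lemma~5.1 of \cite{Rich} quoted before Notation~\ref{nota:thetastable}: choosing a $\theta$-stable Cartan $\ft\subset\fb$, the subalgebra $\fb\cap\fk$ is a Borel subalgebra of $\fk$ whose nilradical is precisely $\fn\cap\fk$. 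Therefore $\dim(\fn\cap\fk)$ equals the number of positive roots of $\fk$, i.e. $\dim\B_{\fk}$, while $\dim\fn$ equals the number of positive roots of $\fg$, i.e. $\dim\B_{\fg}$. This gives $\dim\fn^{-\theta}=\dim\B_{\fg}-\dim\B_{\fk}$, independently of which $\theta$-stable Borel was chosen.

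Finally I would close the argument with the numerical identities of Section~\ref{ss:numerical}. By \eqref{eq:dimbigflag} and \eqref{eq:quotdim}, $\dim\B_{\fg}+\dim\B_{\fk}=\dim\fg-r_{n}-r_{n-1}=\dim\fk$, and since $2\dim\B_{\fk}=\dim\fk-\mathrm{rank}(\fk)=\dim\fk-r_{n-1}$, subtracting yields $\dim\fn^{-\theta}=\dim\B_{\fg}-\dim\B_{\fk}=r_{n-1}=\mathrm{rank}(\fk)$, which is the assertion. I do not expect a genuine obstacle here: the only point that needs care is that $\fn\cap\fk$ is the \emph{full} nilradical of $\fb\cap\fk$, and not merely some nilpotent subalgebra of $\fk$ of smaller dimension — this is exactly what the Richardson-type statement cited before Notation~\ref{nota:thetastable} provides. (As a consistency check, the answer $r_{n-1}$ matches the dimension of each irreducible component of $\N_{\fg^{-\theta}}$ in Lemma~\ref{lem:nullconeirredcomponets}; but the dimension count above treats all of $\B^{\theta}$, not just the standard $\theta$-stable Borels, at once.)
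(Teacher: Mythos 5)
Your proof is correct and follows essentially the same route as the paper: both decompose $\fn$ into its $\theta$-eigenspaces, invoke that $\fn\cap\fk$ is the full nilradical of the Borel subalgebra $\fb\cap\fk$ of $\fk$ (via the Richardson citation), and finish with arithmetic. The only difference is cosmetic — the paper computes $\dim\fn$ and $\dim(\fn\cap\fk)$ directly from Equation~\eqref{eq:sumri} as $\sum_{i=2}^{n-1}r_i$ and $\sum_{i=2}^{n-2}r_i$, while you express them as $\dim\B_{\fg}$ and $\dim\B_{\fk}$ and then feed in Equations~\eqref{eq:dimbigflag} and \eqref{eq:quotdim}; the two bookkeepings are trivially equivalent.
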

\begin{proof}
Since $\fb\in\B^{\theta}$, $\fn\in\mathcal{N}^{\theta}$, and therefore
\begin{equation}\label{eq:ndim}
\dim\fn=\dim \fn^{-\theta}+\dim(\fn\cap\fk).
\end{equation}
By our discussion above, $\fn \cap \fk$ is the nilradical of the Borel subalgebra $\fb\cap\fk$ of $\fk$, so
by Equation (\ref{eq:sumri}),
$$\dim \fn=\frac{1}{2}(\dim\fg-r_{n})=\sum_{i=2}^{n-1} r_{i}, \ \mbox{and} 
\dim(\fn\cap\fk)=\sum_{i=2}^{n-2} r_{i}.$$    
Thus, Equation (\ref{eq:ndim}) becomes 
$$
\dim \fn^{-\theta}=\dim \fn-\dim \fn\cap\fk=\displaystyle \sum_{i=2}^{n-1} r_{i}-\displaystyle\sum_{i=2}^{n-2} r_{i}=r_{n-1}.
$$
\end{proof}

\begin{prop}\label{p:nullconeslice}
Let $\N_{\fg^{-\theta}}$ be the nullcone for the $H\cap K$-action on $\fg^{-\theta}$, and 
let $\mathcal{C}_{i_{1},\dots, i_{r_{n-1}}}$ be an irreducible component of 
$\N_{\fg^{-\theta}}$.  Then there exists $\fn\in\mathcal{N}^{\theta}_{std}$ such that 
$\mathcal{C}_{i_{1},\dots, i_{r_{n-1}}}=\fn^{-\theta}.$  

Conversely, if $\fn\in\mathcal{N}_{std}^{\theta}$, then $\fn^{-\theta}$ is an irreducible component of $\N_{\fg^{-\theta}}$ and therefore
$\fn^{-\theta}=\mathcal{C}_{i_{1},\dots, i_{r_{n-1}}}$ for some choice of indices $i_{j}=U,\, L$ as in 
(\ref{eq:Cdefn}).
\end{prop}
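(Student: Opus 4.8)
The plan is to prove the two implications separately. For the implication ``$\fn \in \mathcal{N}^{\theta}_{std} \Rightarrow \fn^{-\theta}$ is an irreducible component'', I will combine a contraction argument with the dimension count already available. Write $\fn = [\fb,\fb]$ with $\fb \in \B^{\theta}_{std}$, and let $\Psi$ be the set of $\fh$-roots occurring in $\fn$; since $\fb \supset \fh$ is $\theta$-stable, $\Psi$ is a $\theta$-stable positive system. The first step is to produce a one-parameter subgroup of $H \cap K$ contracting $\fn$ to $0$: choose $\mu_{0} \in \fh$ with $\alpha(\mu_{0}) > 0$ for all $\alpha \in \Psi$ and set $\mu := \tfrac{1}{2}(\mu_{0} + \theta(\mu_{0})) \in \fh^{\theta} = \fh \cap \fk$. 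Because $\Psi$ is $\theta$-stable, $\alpha(\mu) = \tfrac{1}{2}(\alpha(\mu_{0}) + (\theta\alpha)(\mu_{0})) > 0$ for every $\alpha \in \Psi$, so $\Ad(\exp(t\mu))$ ($t \in \R$) is a curve in the Cartan subgroup $H\cap K$ of $K$ with $\Ad(\exp(t\mu))x \to 0$ as $t \to -\infty$ for every $x \in \fn$; hence $\fn^{-\theta} \subset \N_{\fg^{-\theta}}$. Now $\fn^{-\theta}$ is a linear subspace, hence closed and irreducible, and $\dim \fn^{-\theta} = r_{n-1}$ by Lemma \ref{l:dim}. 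Since Lemma \ref{lem:nullconeirredcomponets} presents $\N_{\fg^{-\theta}}$ as a union of irreducible components each of dimension $r_{n-1}$, an irreducible closed subvariety of dimension $r_{n-1}$ must coincide with one of them, so $\fn^{-\theta} = \mathcal{C}_{i_{1},\dots,i_{r_{n-1}}}$ for some tuple.

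For the other implication, given a tuple $(i_{1},\dots,i_{r_{n-1}})$ put $\sigma_{j} = +$ if $i_{j} = U$ and $\sigma_{j} = -$ if $i_{j} = L$, and I will exhibit $\fb \in \B^{\theta}_{std}$ with $[\fb,\fb]^{-\theta} = \mathcal{C}_{i_{1},\dots,i_{r_{n-1}}}$ by choosing a suitable element of $\fh \cap \fk$. Take $\mu \in \fh \cap \fk$ with $\alpha(\mu) \ne 0$ for every root $\alpha \in \Phi(\fg,\fh)$ and with $\eps_{j}(\mu)$ of sign $\sigma_{j}$ for $j = 1,\dots,r_{n-1}$; such $\mu$ exists, since assigning $\eps_{j}(\mu) = \sigma_{j}c_{j}$ with $c_{1},\dots,c_{r_{n-1}}$ distinct positive reals (and, in type $D$, $\eps_{l}(\mu)=0$ automatically because $\fh\cap\fk = \fh^{\theta}$) forces $\alpha(\mu)\ne 0$ for every root. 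Let $\Psi = \{\alpha : \alpha(\mu) > 0\}$ and $\fb = \fh \oplus \bigoplus_{\alpha \in \Psi}\fg_{\alpha}$; then $\fb$ is standard and $\theta$-stable, the latter because $\mu$ is $\theta$-fixed, so $\theta\alpha \in \Psi$ whenever $\alpha \in \Psi$. In type $B$, $\theta = \Ad(t)$ with $t \in H$ acts trivially on $\fh$ and $\fg^{-\theta} = \bigoplus_{i}\fg_{\pm\eps_{i}}$, so $[\fb,\fb]^{-\theta} = \bigoplus_{i}\fg_{\sigma_{i}\eps_{i}}$, since $\sigma_{i}\eps_{i}$ is the element of $\{\pm\eps_{i}\}$ lying in $\Psi$; comparing with $(\ref{eq:Cdefn})$ this is exactly $\mathcal{C}_{i_{1},\dots,i_{l}}$. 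In type $D$, $\eps_{i}-\eps_{l}$ and $\eps_{i}+\eps_{l}$ are interchanged by $\theta$ and both lie in $\Psi$ iff $\eps_{i}(\mu) > 0$ iff $\sigma_{i} = +$, so $[\fb,\fb]^{-\theta}$ is spanned by those $f_{i}-\theta(f_{i})$ with $\sigma_{i}=+$ and those $f_{-i}-\theta(f_{-i})$ with $\sigma_{i}=-$, which by $(\ref{eq:Cdefn})$ is again $\mathcal{C}_{i_{1},\dots,i_{l-1}}$.

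The step I expect to require the most care is the second implication in type $D$: tracking exactly which of the vectors $f_{\pm i}-\theta(f_{\pm i})$ survive in $\fn^{-\theta} = [\fb,\fb] \cap \fg^{-\theta}$ and checking this against the defining equations of $\mathcal{C}_{i_{1},\dots,i_{r_{n-1}}}$, including the condition $x_{\fh^{-\theta}} = 0$ (which holds because a nilradical meets $\fh$ trivially, so $\fh^{-\theta}\cap\fn = 0$). The remaining points — that the averaged cocharacter $\mu$ genuinely lies in $H \cap K$, and that a $\theta$-fixed element with $\alpha(\mu)\neq 0$ for all roots produces a standard $\theta$-stable Borel — are routine.
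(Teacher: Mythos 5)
Your proof is correct and follows essentially the same approach as the paper's: choose a regular $\theta$-fixed element of $\fh$ with coordinates of prescribed signs, form the resulting standard $\theta$-stable Borel, and compare its $\fn^{-\theta}$ to $\mathcal{C}_{i_1,\dots,i_{r_{n-1}}}$ (the paper uses containment plus the dimension count from Lemma~\ref{l:dim}, while you do the direct type-$B$/type-$D$ computation; both are fine). For the converse, your explicit averaging of a dominant cocharacter with its $\theta$-conjugate is just a way of making precise what the paper asserts, namely that the Cartan subgroup $H\cap K$ of $B\cap K$ acts on $\fn^{-\theta}$ with strictly positive weights, so the content is the same.
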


\begin{proof}
We prove the proposition when $\fg=\fso(2l)$ is type $D$.  
The case where $\fg$ is type $B$ is similar and left to the reader.  
Suppose $\mathcal{C}_{i_{1},\dots, i_{l-1}}\subset \N_{\fg^{-\theta}}$ is an irreducible component.  
Then there exists an $x\in (\fh\cap\fk)_{reg}$ with $x\in\fh_{reg}$ such that 
$\ad(x)$ acts on $\mathcal{C}_{i_{1},\dots, i_{l-1}}$ with positive weights.  Indeed, using Equation (\ref{eq:typeDact}),  we let $x$ be the element
$\mbox{diag}[x_{1},\dots, x_{l-1}, 0,0, -x_{l-1},\dots, -x_{1}]$ with $x_{j}\in \mathbb{Z}$ for $j=1,\dots, l-1$, and satisfying 
$x_{j}>0$ if $i_{j}=U$, $x_{j}<0$ if $i_{j}=L$, and $x_{j}\neq \pm x_{i}$ for $j\neq i$, and note that $x$ has the desired properties.  For $k\in \mathbb{Z}^{+}$, let $\fg(k)$ be the $k$-eigenspace for the action of $\ad(x)$.  
 Since $x\in\fk$, the Lie subalgebra $\fb:=\fh\oplus \oplus_{k\in\mathbb{Z}^{+}} \fg(k)$ is $\theta$-stable.  Further, since $x$ is regular semisimple in $\fg$, 
 $\fb$ is a Borel subalgebra, with nilradical $\fn=\bigoplus_{k\in\mathbb{Z}^{+}} \fg(k)$ (Lemma 3.84 of \cite{CoM}). 
By construction $\mathcal{C}_{i_{1},\dots, i_{l-1}}\subset \fn^{-\theta}$, and $\fn\in\mathcal{N}^{\theta}_{std}$. 
By Lemmas \ref{lem:nullconeirredcomponets} and \ref{l:dim},  $\dim \mathcal{C}_{i_{1},\dots, i_{l-1}} =\dim \fn^{-\theta}=r_{n-1}$, so 
that $\mathcal{C}_{i_{1},\dots, i_{l-1}}=\fn^{-\theta}$. 

For the converse, let $\fn\in\mathcal{N}^{\theta}_{std}$ with $\fn=[\fb,\fb]$, $\fb\in\mathcal{B}^{\theta}_{std}$.  Then the Cartan subgroup
$H\cap K$ of the Borel subgroup $B\cap K$ acts on $\fn$ and on $\fn^{-\theta}$ with positive weights.  
Hence, $\fn^{-\theta}\subset \N_{\fg^{-\theta}}$.  Further, $\fn^{-\theta}$ is a closed, irreducible subvariety of 
$\N_{\fg^{-\theta}}$ of dimension $\dim \fn^{-\theta}=r_{n-1}$ by Lemma \ref{l:dim}.  It follows that $\fn^{-\theta}$ is an irreducible component of $\mathfrak{N}_{\fg^{-\theta}}$ and hence 
$\fn^{-\theta}$ is of the form $\mathcal{C}_{i_{1},\dots, i_{l-1}}$ by Lemma \ref{lem:nullconeirredcomponets}. 
\end{proof}

Given Proposition \ref{p:nullconeslice}, we define an equivalence relation on $\mathcal{N}_{std}^{\theta}$ by:
\begin{equation}\label{eq:equivrelation}
\fn\equiv\fn^{\prime} \Leftrightarrow \fn^{-\theta}=(\fn^{\prime})^{-\theta}.
\end{equation}
Let $\mathfrak{S}^{-\theta}:=\mathcal{N}_{std}^{\theta}/\equiv$ be the set of equivalence classes of $\mathcal{N}_{std}^{\theta}$ modulo the relation in (\ref{eq:equivrelation}).  
We shall denote the equivalence class of $\fn\in\mathcal{N}^{\theta}_{std}$ by $[\fn]\in\mathfrak{S}^{-\theta}$.

\begin{cor}\label{c:countfn}
The cardinality of the set of equivalence classes $\mathfrak{S}^{-\theta}$ is $2^{r_{n-1}}$.
\end{cor}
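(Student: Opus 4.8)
The plan is to turn Proposition \ref{p:nullconeslice} into an explicit bijection. Define a map from $\mathfrak{S}^{-\theta}$ to the set of irreducible components of $\N_{\fg^{-\theta}}$ by $[\fn]\mapsto \fn^{-\theta}$. First I would observe that this map is well defined and injective directly from the definition of the equivalence relation in (\ref{eq:equivrelation}): two classes $[\fn]$ and $[\fn']$ coincide precisely when $\fn^{-\theta}=(\fn')^{-\theta}$, so the assignment neither depends on the chosen representative nor collapses distinct classes. That the target is genuinely the set of irreducible components of $\N_{\fg^{-\theta}}$ is the converse statement in Proposition \ref{p:nullconeslice}, which says that for every $\fn\in\mathcal{N}^{\theta}_{std}$ the subspace $\fn^{-\theta}$ is an irreducible component of $\N_{\fg^{-\theta}}$.

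Next I would check surjectivity onto the set of components. This is exactly the first assertion of Proposition \ref{p:nullconeslice}: every irreducible component $\mathcal{C}_{i_1,\dots,i_{r_{n-1}}}$ of $\N_{\fg^{-\theta}}$ is of the form $\fn^{-\theta}$ for some $\fn\in\mathcal{N}^{\theta}_{std}$. Combining the two directions of Proposition \ref{p:nullconeslice} with the injectivity just noted shows that $[\fn]\mapsto\fn^{-\theta}$ is a bijection between $\mathfrak{S}^{-\theta}$ and the set of irreducible components of $\N_{\fg^{-\theta}}$.

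Finally, Lemma \ref{lem:nullconeirredcomponets} counts those components: there are exactly $2^{r_{n-1}}$ of them, indexed by the $r_{n-1}$-tuples of symbols $U$, $L$. Hence $|\mathfrak{S}^{-\theta}|=2^{r_{n-1}}$, which is the claim. I do not anticipate a real obstacle here, since all the content has already been assembled in Lemma \ref{lem:nullconeirredcomponets} and Proposition \ref{p:nullconeslice}; the only point requiring (minimal) care is verifying that the proposed map is both injective and exhaustive in both directions, and each of these is immediate from the cited results.
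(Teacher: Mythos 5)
Your argument is correct and is the same as the paper's, which simply says the result follows by combining Lemma \ref{lem:nullconeirredcomponets} with Proposition \ref{p:nullconeslice}. You have merely spelled out the obvious bijection $[\fn]\mapsto\fn^{-\theta}$ explicitly, checking well-definedness, injectivity (both immediate from the definition of $\equiv$), and surjectivity (from Proposition \ref{p:nullconeslice}), and then invoked the count in Lemma \ref{lem:nullconeirredcomponets}.
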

\begin{proof}
The result follows by combining Lemma \ref{lem:nullconeirredcomponets} with Proposition \ref{p:nullconeslice}.
\end{proof}

We now use the Luna slice theorem and Proposition \ref{p:nullconeslice}
to completely describe the fibres $\Phi_{n}^{-1}(\Phi_{n}(x))$ for $x\in (\fh\cap\fk)_{reg}$.
\begin{thm}\label{thm:genericfibres}  
Let $x\in (\fh\cap\fk)_{reg}$.  The irreducible component decomposition of the fibre 
$\partialfibre$ is 
\begin{equation}\label{eq:partialfibredecomp}
\partialfibre=\displaystyle\bigcup_{[\fn]\in\mathfrak{S}^{-\theta}} \Ad(K)\cdot(x+\fn^{-\theta}),
\end{equation}
Further, each irreducible component $\Ad(K)\cdot(x+\fn^{-\theta})$ is smooth, and there are exactly
$2^{r_{n-1}}$ irreducible components in $\partialfibre$.

\end{thm}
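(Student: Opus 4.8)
The plan is to apply Luna's slice theorem at the point $x$, reusing the computations of Section~\ref{ss:partialKWnilfibre}, and then read off the component decomposition of the fibre by transporting that of the slice nullcone through the Luna isomorphism. Since $x\in(\fh\cap\fk)_{reg}$ is regular semisimple in $\fk$, the orbit $\Ad(K)\cdot x$ is closed in $\fg$ with isotropy $H\cap K$, and, as recorded above, $\fs=\fh+\fg^{-\theta}$ is an $H\cap K$-slice at $x$ whose nullcone is $\N_{\fg^{-\theta}}$. By Proposition~\ref{prop:flat}(2) the GIT quotient $\fg\to\fg//K$ coincides with $\Phi_{n}$, so (\ref{eq:Lunaiso}) yields a $K$-equivariant isomorphism $\rho\colon K\times_{H\cap K}(x+\N_{\fg^{-\theta}})\xrightarrow{\ \cong\ }\partialfibre$, $(g,\,x+n)\mapsto\Ad(g)(x+n)$.

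Next I would combine Lemma~\ref{lem:nullconeirredcomponets} and Proposition~\ref{p:nullconeslice}: these give the irreducible component decomposition $\N_{\fg^{-\theta}}=\bigcup_{[\fn]\in\mathfrak{S}^{-\theta}}\fn^{-\theta}$ into $2^{r_{n-1}}$ affine spaces $\fn^{-\theta}\cong\C^{r_{n-1}}$, each of which is stable under $H\cap K$ because it is cut out by $H\cap K$-weight conditions, see (\ref{eq:Cdefn}). Since $x$ is fixed by $H\cap K$, each $x+\fn^{-\theta}$ is an $H\cap K$-stable closed subset of $x+\N_{\fg^{-\theta}}$, so $K\times_{H\cap K}(x+\fn^{-\theta})$ is a closed subvariety of the total space, and it is irreducible (the image of the irreducible $K\times(x+\fn^{-\theta})$ under the quotient map, $K$ being connected). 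These subvarieties all have dimension $\dim K-\dim(H\cap K)+r_{n-1}$, and distinct ones are incomparable, since restriction of such a bundle to the base point recovers the corresponding $x+\fn^{-\theta}$; hence they are exactly the irreducible components of $K\times_{H\cap K}(x+\N_{\fg^{-\theta}})$. Applying $\rho$ identifies these with the sets $\Ad(K)\cdot(x+\fn^{-\theta})$, which gives (\ref{eq:partialfibredecomp}), and the count $2^{r_{n-1}}$ is Corollary~\ref{c:countfn}. Smoothness is then immediate: $K\times_{H\cap K}(x+\fn^{-\theta})$ fibres over the smooth variety $K/(H\cap K)$ with smooth fibre $\C^{r_{n-1}}$, hence is smooth, and $\rho$ is an isomorphism.

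The step I expect to require the most care is the component bookkeeping in the middle paragraph: one must be sure that the slice-nullcone components $\mathcal{C}_{i_{1},\dots,i_{r_{n-1}}}=\fn^{-\theta}$ are \emph{individually} $H\cap K$-stable (not merely permuted among themselves), so that the sub-bundles $K\times_{H\cap K}(x+\fn^{-\theta})$ are well defined and map onto $\Ad(K)\cdot(x+\fn^{-\theta})$ under $\rho$. This follows at once from the explicit weight descriptions in (\ref{eq:Cdefn}) and (\ref{eq:typeDact}), since $H\cap K$ acts diagonally, scaling $\lambda_{j}$ by $h_{j}$ and $\mu_{j}$ by $h_{j}^{-1}$ (and trivially on $\fh^{-\theta}$). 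Once this is in place the statement is a formal consequence of Luna's slice theorem together with Lemma~\ref{lem:nullconeirredcomponets}, Proposition~\ref{p:nullconeslice}, and Corollary~\ref{c:countfn}; no further case analysis is needed, since the type $B$/type $D$ distinction has already been absorbed into those results.
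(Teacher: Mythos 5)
Your proposal is correct and follows essentially the same route as the paper's own proof: identify $\Phi_n$ with the GIT quotient $\fg\to\fg//K$, invoke the Luna slice isomorphism at $x$, and transport the irreducible component decomposition of the nullcone $\N_{\fg^{-\theta}}$ from Lemma~\ref{lem:nullconeirredcomponets} and Proposition~\ref{p:nullconeslice} across the isomorphism, with the count coming from Corollary~\ref{c:countfn}. Your extra care in checking that each $\fn^{-\theta}$ is individually $H\cap K$-stable, and in verifying that the resulting sub-bundles are closed, irreducible, and pairwise incomparable, merely makes explicit details the paper leaves implicit.
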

\begin{proof}
By Part (2) of Proposition \ref{prop:flat}, the partial KW map $\Phi_{n}$ is a GIT quotient for $K$-action on $\fg$.  
Therefore Equation (\ref{eq:Lunaiso}) implies that we have a $K$-equivariant isomorphism
\begin{equation}\label{eq:Kiso}
\Kbundleslice\stackrel{\cong}{\to}\partialfibre\mbox{ given by } (k, x+n)\mapsto \Ad(k)\cdot (x+n),
\end{equation}
where $k\in K$ and $n\in\N_{\fg^{-\theta}}$.  Proposition \ref{p:nullconeslice} implies that the irreducible component 
decomposition of the fibre bundle $\Kbundleslice$ is: 
\begin{equation}\label{eq:bundlecpts}
\Kbundleslice=\displaystyle\bigcup_{[\fn]\in\mathfrak{S}^{-\theta}} K\times_{H\cap K} (x+\fn^{-\theta}).  
\end{equation}
Equation (\ref{eq:partialfibredecomp}) now follows from (\ref{eq:Kiso}) and (\ref{eq:bundlecpts}).  
The smoothness of the varieties $\Ad(K)\cdot(x+\fn^{-\theta})$ with $\fn\in\mathcal{N}^{\theta}_{std}$ follows from 
(\ref{eq:Kiso}) and the fact that the fibre bundles $K\times_{H\cap K} (x+\fn^{-\theta})$ are smooth.  The final
statement of the theorem follows from (\ref{eq:Kiso}), (\ref{eq:bundlecpts}), and Corollary \ref{c:countfn}.

\end{proof}

\begin{lem}\label{l:cpts}
Let $x\in(\fh\cap\fk)_{reg}$ and $\fn\in \mathcal{N}_{std}^{\theta}$.  Then 
\begin{equation}\label{eq:cpts}
\Ad(K)\cdot(x+\fn^{-\theta})=\Ad(K)\cdot(x+\fn).
\end{equation}
\end{lem}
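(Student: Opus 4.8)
The inclusion $\Ad(K)\cdot(x+\fn^{-\theta})\subseteq\Ad(K)\cdot(x+\fn)$ is immediate from $\fn^{-\theta}\subseteq\fn$, so the plan is to prove the reverse inclusion $\Ad(K)\cdot(x+\fn)\subseteq\Ad(K)\cdot(x+\fn^{-\theta})$ directly. Fix $\fb\in\B^{\theta}_{std}$ with $\fn=[\fb,\fb]$. Since $\fb$, and hence $\fn$, is $\theta$-stable, there is a $\theta$-eigenspace decomposition $\fn=\fn_{\fk}\oplus\fn^{-\theta}$ with $\fn_{\fk}:=\fn\cap\fk$ and $\fn^{-\theta}=\fn\cap\fp$, and, by the discussion preceding Notation \ref{nota:thetastable}, $\fn_{\fk}$ is the nilradical of the Borel subalgebra $\fb\cap\fk$ of $\fk$. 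Let $N_{K}$ be the unipotent radical of the Borel subgroup $B\cap K$ of $K$, so that $\mathrm{Lie}(N_{K})=\fn_{\fk}$ and $N_{K}\subseteq B\cap K$ lies in both $K$ and $B$.

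The crucial input will be the orbit identity $\Ad(N_{K})\cdot x=x+\fn_{\fk}$. One inclusion, $\Ad(N_{K})\cdot x\subseteq x+\fn_{\fk}$, holds because $N_{K}$ normalizes $\fn_{\fk}$ and fixes $x$ modulo $\fn_{\fk}$; for the reverse, the differential at the identity of the orbit map $N_{K}\to x+\fn_{\fk}$ is $\nu\mapsto-\ad(x)\nu$, which is invertible on $\fn_{\fk}$ because $x\in(\fh\cap\fk)_{reg}$ is regular semisimple in $\fk$ and hence $\alpha(x)\neq0$ for every root $\alpha$ of $\fk$; thus the orbit is dense in the irreducible affine space $x+\fn_{\fk}$, and, being a unipotent-group orbit on an affine variety, it is also closed, hence all of $x+\fn_{\fk}$. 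Granting this, take any $y\in x+\fn$ and write $y=x+n_{\fk}+n^{-}$ with $n_{\fk}\in\fn_{\fk}$ and $n^{-}\in\fn^{-\theta}$. Choose $u\in N_{K}$ with $\Ad(u)\cdot x=x+n_{\fk}$; then $y=\Ad(u)\cdot x+n^{-}$, so $\Ad(u^{-1})\cdot y=x+\Ad(u^{-1})\cdot n^{-}$.

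It then suffices to check that $\Ad(u^{-1})\cdot n^{-}\in\fn^{-\theta}$, for this gives $\Ad(u^{-1})\cdot y\in x+\fn^{-\theta}$, hence $y\in\Ad(u)\cdot(x+\fn^{-\theta})\subseteq\Ad(K)\cdot(x+\fn^{-\theta})$, which completes the argument. Here the containment $N_{K}\subseteq B\cap K$ is used twice over: since $u\in K=(G^{\theta})^{0}$ we have $\theta(u)=u$, so $\Ad(u^{-1})$ commutes with $\theta$ and preserves $\fp=\fg^{-\theta}$; and since $u\in B$, which normalizes its nilradical $\fn$, $\Ad(u^{-1})$ preserves $\fn$; so $\Ad(u^{-1})$ preserves $\fn^{-\theta}=\fn\cap\fp$. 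The only nonformal point is the orbit identity $\Ad(N_{K})\cdot x=x+\fn_{\fk}$ — everything afterward is bookkeeping — and that is exactly the step where the hypothesis that $x$ be regular semisimple in $\fk$ enters. (Alternatively, one can avoid the explicit conjugation: one first checks that $x+\fn\subseteq\Phi_{n}^{-1}(\Phi_{n}(x))$, using that an element of $\fb$, resp. of $\fb\cap\fk$, has the characteristic polynomial of its $\fh$-component, and then deduces the lemma by a dimension count against the equidimensional fibre $\Phi_{n}^{-1}(\Phi_{n}(x))$ together with Theorem \ref{thm:genericfibres}.)
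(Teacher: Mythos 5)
Your proof is correct and follows essentially the same route as the paper's: both rest on the orbit identity $\Ad(B\cap K)\cdot x = \Ad(N_K)\cdot x = x+\fn\cap\fk$ (which the paper cites from \cite{CG} and you prove directly via the open-plus-closed argument for unipotent orbits) together with the observation that $\fn^{-\theta}$ is stable under $B\cap K$. The only differences are presentational — you spell out the conjugation bookkeeping that the paper compresses into a single chain of equalities.
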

\begin{proof}
Let $\fb\in\B^{\theta}_{std}$ with $\fn=[\fb,\fb]$.  Since $\fb\in\B^{\theta}_{std}$, $\fb\cap\fk$ is a Borel subalgebra of $\fk$ with nilradical $\fn\cap\fk$ and 
corresponding Borel subgroup $B\cap K$ of $K$.  Since $x\in(\fh\cap\fk)_{reg}$, then
$\Ad(B\cap K)\cdot x = x + \fn\cap\fk$ by Lemma 3.1.44 of \cite{CG}.   Hence,
since $\fn^{-\theta}$ is $B\cap K$-stable,
$$
\Ad(K)\cdot (x+\fn)=\Ad(K)\cdot(x+\fn\cap\fk+\fn^{-\theta})=\Ad(K)\cdot(\Ad(B\cap K)\cdot x+\fn^{-\theta})=\Ad(K)\cdot(x+\fn^{-\theta}).
$$
\end{proof}
Combining Lemma \ref{l:cpts} with Equations (\ref{eq:partialfibredecomp}) and (\ref{eq:cpts}), we can decompose 
the fibre $\partialfibre$, $x\in(\fh\cap\fk)_{reg}$ into irreducible subvarieties as:
\begin{equation}\label{eq:genericdecomp}
\partialfibre=\displaystyle\bigcup_{[\fn]\in\mathfrak{S}^{-\theta}} \Ad(K)\cdot (x+\fn).  
\end{equation}

The following result on closed $K$-orbits in $\B = \B_{\fg}$ is well-known
(figure 4.3 of \cite{Collingwood}).   The reader can find a proof in
Section 2.7 of \cite{CEspherical}.

\begin{prop}\label{p:closedKorbs}
(1) Let $\fg=\fso(2l+1)$, let $\fb_+$ be the upper triangular matrices in $\fg$, let $\fb_{-}:=\Ad(\dot{s}_{\alpha_{l}})\cdot \fb_{+}$,  
where $\dot{s}_{\alpha_{l}}$ is a representative of the simple 
reflection $s_{\alpha_{l}}$, and let $Q_{\fb_{+}}$ and $Q_{\fb_{-}}$ be the $K$-orbits on $\B$ containing $\fb_{+}$ and $\fb_{-}$ respectively. 
 Then the flag variety $\B$ has two closed $K$-orbits which are  $Q_{\fb_{+}}$ and $Q_{\fb_{-}}$.\\
(2)  Let $\fg=\fso(2l)$, and let $\fb_+$ be the set of upper triangular matrices in $\fg$.
Then $Q_{\fb_{+}}$ is the only closed $K$-orbit.
\end{prop}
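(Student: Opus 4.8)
The plan is to reduce the classification of closed $K$-orbits on $\B=\B_{\fg}$ to a finite, combinatorial check using the theory of $K$-orbits on the flag variety of a symmetric space, for which two facts are standard: (a) a $K$-orbit through a Borel subalgebra $\fb$ is closed if and only if $\fb$ is $\theta$-stable and $\fb\cap\fk$ together with a $\theta$-stable Cartan subalgebra $\ft\subset\fb$ gives a Borel/Cartan pair of $\fk$ (this is essentially the discussion already given in the text following Notation \ref{nota:thetastable}, combined with Lemma 5.1 of \cite{Rich}); and (b) the number of closed $K$-orbits on $\B$ equals the number of $K$-conjugacy classes of $\theta$-stable Borel subalgebras containing a fixed $\theta$-stable maximal torus, which in turn is indexed by $W(\fg,\ft)^{\theta}$-cosets, or more directly by the $\theta$-action on the simple roots. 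First I would fix the standard $\theta$-stable Cartan $\fh$ and observe that every closed $K$-orbit meets the set of standard $\theta$-stable Borel subalgebras $\B^{\theta}_{std}$, so it suffices to count $W_K=W(\fk,\fh\cap\fk)$-orbits (equivalently $N_K(H\cap K)$-orbits) on $\B^{\theta}_{std}$.

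For part (1), with $\fg=\fso(2l+1)$ and $\fk=\fso(2l)$, the involution $\theta$ is inner (it is $\Ad(t)$) and acts trivially on the root system $\Phi(\fg,\fh)$, so $\fh\subset\fk$ and every Borel subalgebra containing $\fh$ is automatically $\theta$-stable; thus $\B^{\theta}_{std}$ is acted on simply transitively by $W(\fg,\fh)$, and the closed $K$-orbits correspond to $W(\fg,\fh)/W(\fk,\fh)$. Since $W(\fso(2l+1))$ has order $2^l l!$ and $W(\fso(2l))$ has order $2^{l-1} l!$, this quotient has order $2$, giving exactly two closed orbits; I would then identify representatives by noting that $\fb_+$ and $\fb_- = \Ad(\dot{s}_{\alpha_l})\cdot\fb_+$ lie in different $W_K$-cosets because $s_{\alpha_l}$ corresponds to the noncompact imaginary simple root $\eps_l$, which is precisely the generator of $W(\fg,\fh)$ not in $W(\fk,\fh)$ (compare Example \ref{ex:roottypes}). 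For part (2), with $\fg=\fso(2l)$ and $\fk=\fso(2l-1)$, the involution is the outer (diagram) automorphism $\theta_{2l}$ swapping $\alpha_{l-1}$ and $\alpha_l$; here $\fh\not\subset\fk$, but $\fh\cap\fk$ is a maximal torus of $\fk$ and, as recorded in Example \ref{ex:roottypes}, the simple roots of $\fk$ are obtained by folding. Since $\theta_{2l}$ is quasi-split (its fixed Borel $\fb_+\cap\fk$ together with $\fh\cap\fk$ is a Borel/Cartan pair of $\fk$) and acts on $\B^{\theta}_{std}$ compatibly with the $W(\fg,\fh)^{\theta}$-action, the closed $K$-orbits are indexed by $W(\fg,\fh)^{\theta}\backslash(\text{something})$; the quasi-split case is exactly the one where there is a unique closed orbit — the "big cell" orbit through $\fb_+$ — so $Q_{\fb_+}$ is the only closed $K$-orbit.

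The step I expect to be the main obstacle is keeping the bookkeeping honest in part (2): because $\theta_{2l}$ is outer one cannot simply quote "$\B^{\theta}_{std}\cong W(\fg,\fh)$", and one must argue carefully that a standard $\theta$-stable $\fb$ yields, via $\fb\cap\fk$, a Borel of $\fk$, and that any two such are $W_K$-conjugate. The cleanest route is probably to cite the general structure theory for $K$-orbits on $G/B$ in the quasi-split case (e.g.\ via \cite{Collingwood}, figure 4.3, or the exposition in Section 2.7 of \cite{CEspherical} which the text already references), rather than reproving it; indeed, since the paper explicitly says "The reader can find a proof in Section 2.7 of \cite{CEspherical}," the honest thing here is to give the conceptual outline above and defer the detailed orbit combinatorics to that reference, emphasizing only the two counts ($|W(\fso(2l+1))/W(\fso(2l))|=2$ and the uniqueness in the quasi-split type $D$ case) and the identification of the representatives $\fb_+,\fb_-$.
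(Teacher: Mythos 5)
Your general strategy of classifying closed $K$-orbits by $\theta$-stable Borels through the fundamental Cartan $\fh$, then counting $K$-conjugacy classes by a Weyl-group index, is sound, and your count for part (1) is correct. Part (2), however, rests on two false claims. First, $\theta_{2l}$ is \emph{not} quasi-split once $l\ge 3$: the pair $(\fso(2l),\fso(2l-1))$ corresponds to the real form $\fso(2l-1,1)$, whose $\fm$-factor $\fz_{\fk}(\fa)\cong\fso(2l-2)$ is nonabelian for $l\ge 3$, so it is quasi-split only for $l\le 2$. The parenthetical you offer as justification (that $\fb_{+}\cap\fk$ together with $\fh\cap\fk$ is a Borel/Cartan pair of $\fk$) is a feature of \emph{every} $\theta$-stable Borel and is unrelated to quasi-splitness: per Remark \ref{r:symmnumerical}, quasi-split means there is a Borel $\fb$ with $\fb\cap\theta(\fb)$ equal to a Cartan, i.e.\ a Borel as far as possible from being $\theta$-stable. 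Second, even where quasi-splitness does hold it does not force a unique closed orbit: $(\fsl(2),\fso(2))$ --- your part (1) with $l=1$, the split real form $\fsl(2,\R)$ --- is quasi-split yet has two closed $K$-orbits, namely the two $SO(2)$-fixed points of $\mathbb{P}^1$; similarly the split form $\fsl(3,\R)$ has three closed $SO(3)$-orbits on its flag variety. (Also the $K$-orbit through $\fb_{+}$ is the closed orbit, not a ``big cell''.)

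What actually closes the gap in part (2) is to finish the Weyl-group count you began. Since $\theta_{2l}$ is a diagram automorphism fixing $\Phi^{+}(\fg,\fh)$, the standard $\theta$-stable Borels $\fb_{w}=\dot w\fb_{+}\dot w^{-1}$ are exactly those with $w\in W(\fg,\fh)^{\theta}$, and this fixed subgroup has order $2^{l-1}(l-1)!$. On the other hand $W(\fk,\fh\cap\fk)\cong W(\fso(2l-1))$ also has order $2^{l-1}(l-1)!$, and it embeds into $W(\fg,\fh)^{\theta}$ via $N_{K}(H\cap K)\subset N_{G}(H)$ (using $\fz_{\fg}(\fh\cap\fk)=\fh$, so $N_{K}(H\cap K)=N_{K}(H)$, with kernel $H\cap K$). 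Comparing orders makes this a bijection, so there is a single $K$-conjugacy class of standard $\theta$-stable Borel and hence a single closed orbit. Note finally that the paper gives no proof of this proposition --- it cites Section 2.7 of \cite{CEspherical} --- so the comparison here is with that reference rather than with an in-paper argument.
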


\begin{rem}\label{r:thetastableclosed}
It is well-known that $\fb\in\B^{\theta}$ if and only if the $K$-orbit $Q_{\fb}$ of $\fb$ in $\B$ is closed in $\B$ 
(see Proposition 4.12 of \cite{CEexp}).  %Note that by Proposition \ref{p:closedKorbs}, every 
%closed $K$-orbit $Q$ on $\B$ contains a $\fb\in\B^{\theta}_{std}$. 
\end{rem}

%For $\fb\in\B$, we let $K\cdot\fb\subset\fg$ be the $K$-saturation of $\fb$ in $\fg$, and 
%fsimilarly $K\cdot\fn\subset\fg$ denotes the $K$-saturation of $\fn=[\fb,\fb]$.  %We denote the $K$-orbit of $\fb\in\B$ by $Q_{\fb}$. 

%Let $Q_{\fb}\subset\B$ be the $K$-orbit through $\fb$, and similarly for $Q_{\fb^{\prime}}$.  
\begin{lem}\label{l:Ksaturation}
Let $\fn,\,\fn^{\prime}\in \mathcal{N}^{\theta}_{std}$, and let $\fb,\, \fb^{\prime}\in \B^{\theta}_{std}$, with $\fn=[\fb,\fb]$ and $\fn^{\prime}=[\fb^{\prime},\fb^{\prime}]$.    
\begin{enumerate}
\item Then $\Ad(K)\cdot\fn=\Ad(K)\cdot\fn^{\prime}$ if and only if $Q_{\fb}=Q_{\fb^{\prime}}$.

\item If $Q_{\fb}\neq Q_{\fb^{\prime}}$, then $[\fn]\neq [\fn^{\prime}]$ in $\mathfrak{S}^{-\theta}$.

\end{enumerate}

\end{lem}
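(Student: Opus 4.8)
The plan is to prove both parts of Lemma \ref{l:Ksaturation} by relating the $K$-saturation of a standard $\theta$-stable nilradical $\fn = [\fb,\fb]$ to the $K$-orbit $Q_\fb$ of the corresponding Borel, and then to combine this with Proposition \ref{p:nullconeslice} and the structure of closed $K$-orbits in Proposition \ref{p:closedKorbs}.

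For part (1), the key observation is that $\Ad(K)\cdot\fn$ should be thought of as the union of the nilradicals of the Borels in the closed orbit $Q_\fb$. First I would note that since $\fn$ determines $\fb$ (as $\fb = \fz_\fg(\fn) + \fn$ or, more simply, $\fb$ is the unique Borel containing $\fn$ whose nilradical is $\fn$), the map $\fb \mapsto [\fb,\fb]$ is a $K$-equivariant bijection from $\B^\theta$ to $\N^\theta$; hence it restricts to a $K$-equivariant bijection between the closed orbit $Q_\fb \subset \B^\theta$ and its image in $\N^\theta$. Therefore $\Ad(K)\cdot\fn$ and $\Ad(K)\cdot\fn'$ are equal if and only if $Q_\fb$ and $Q_{\fb'}$ coincide, since each determines the other. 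The one subtlety I would need to handle carefully is that $Q_\fb \subset \B^\theta$: this follows from Remark \ref{r:thetastableclosed}, which says $\fb \in \B^\theta$ iff $Q_\fb$ is closed in $\B$, so every Borel in $Q_\fb$ is $\theta$-stable and the bijection $\fb \mapsto [\fb,\fb]$ makes sense on all of $Q_\fb$.

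For part (2), I would argue by contraposition via part (1) and Proposition \ref{p:nullconeslice}. Suppose $[\fn] = [\fn']$ in $\mathfrak{S}^{-\theta}$, i.e., $\fn^{-\theta} = (\fn')^{-\theta}$. By Proposition \ref{p:nullconeslice}, both $\fn^{-\theta}$ and $(\fn')^{-\theta}$ are irreducible components of $\N_{\fg^{-\theta}}$, and the hypothesis says they are the same component $\mathcal{C}_{i_1,\dots,i_{r_{n-1}}}$. I need to deduce from this that $\Ad(K)\cdot\fn = \Ad(K)\cdot\fn'$, and then part (1) gives $Q_\fb = Q_{\fb'}$, contradicting $Q_\fb \neq Q_{\fb'}$. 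To get the intermediate equality, I would use Lemma \ref{l:cpts}: for $x \in (\fh\cap\fk)_{reg}$, $\Ad(K)\cdot(x+\fn) = \Ad(K)\cdot(x+\fn^{-\theta})$ and likewise for $\fn'$, so $\fn^{-\theta} = (\fn')^{-\theta}$ gives $\Ad(K)\cdot(x+\fn) = \Ad(K)\cdot(x+\fn')$. Taking the limit as $x \to 0$ along the $\C^\times$-scaling (or, more cleanly, applying Theorem \ref{thm:genericfibres} and Equation \ref{eq:genericdecomp} to see that the components of the fibre $\partialfibre$ are indexed by $\mathfrak{S}^{-\theta}$ and are the $\Ad(K)\cdot(x+\fn)$) shows $\Ad(K)\cdot\fn = \Ad(K)\cdot\fn'$ as the associated cones.

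The main obstacle I anticipate is verifying rigorously that the assignment $\fb \mapsto [\fb,\fb]$ is injective on $\B^\theta$ and $K$-equivariantly identifies orbits — in particular that distinct Borels in the same closed $K$-orbit have distinct nilradicals, and conversely that $\Ad(K)\cdot\fn$ ``remembers'' the orbit $Q_\fb$ rather than just an unordered collection of subspaces that could a priori coincide for two different orbits. This is where the type $B$ case with its two closed $K$-orbits $Q_{\fb_+}, Q_{\fb_-}$ requires care: I would check, using the explicit root-space descriptions from Example \ref{ex:roottypes} and the action computations preceding Lemma \ref{lem:nullconeirredcomponets}, that $\fb_+$ and $\fb_- = \Ad(\dot s_{\alpha_l})\cdot\fb_+$ have nilradicals that are genuinely not $K$-conjugate, which amounts to a concrete claim about how the $\pm\eps_l$ root vectors sit inside the two nilradicals. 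Once that base case is secured, the general statement follows from $K$-equivariance and the fact (from the preceding structure theory) that every $\fn \in \N^\theta_{std}$ is $K$-conjugate to the nilradical of a Borel in one of the finitely many closed orbits.
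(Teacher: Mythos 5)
Your part~(2) argument is essentially the paper's: both deduce $\Ad(K)\cdot\fn=\Ad(K)\cdot\fn'$ from $[\fn]=[\fn']$ via Lemma~\ref{l:cpts} and a scaling limit $\lambda\to 0$, then invoke part~(1).

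For part~(1), however, there is a real gap in the necessity direction, and you have correctly located it but not closed it. Your framing is: the $K$-equivariant bijection $\fb\mapsto[\fb,\fb]$ between $\B^\theta$ and $\N^\theta$ shows $Q_\fb$ determines the $K$-orbit of $\fn$ and vice versa; therefore $\Ad(K)\cdot\fn$ determines $Q_\fb$. But this ``therefore'' does not follow, because $\Ad(K)\cdot\fn$ in the statement is a \emph{subset of $\fg$} (the union of the subspaces $\Ad(k)\cdot\fn$), not the $K$-orbit of $\fn$ in the Grassmannian or in $\N^\theta$. You flag precisely this concern (``remembers the orbit $Q_\fb$ rather than just an unordered collection of subspaces''), which is good, but the verification you propose --- checking that $\fn_+$ and $\fn_-$ are not $K$-conjugate in type~$B$ --- is the wrong statement. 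That $\fn_+$ and $\fn_-$ lie in different $K$-orbits of nilradicals is \emph{immediate} from $Q_{\fb_+}\neq Q_{\fb_-}$ and injectivity of $\fb\mapsto[\fb,\fb]$; it says nothing about whether the two \emph{unions of subspaces} $\Ad(K)\cdot\fn_+$ and $\Ad(K)\cdot\fn_-$ inside $\fg$ coincide. A priori they could be equal as sets even though no single $k\in K$ carries $\fn_+$ onto $\fn_-$.

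The idea that closes the gap, which your proposal does not contain, is the one the paper uses: pick a \emph{principal nilpotent} element $e\in\fn$. If $\Ad(K)\cdot\fn=\Ad(K)\cdot\fn'$ as subsets of $\fg$, then $e=\Ad(k)\cdot e'$ for some $k\in K$ and $e'\in\fn'$; since principal nilpotency is $\Ad$-invariant, $e'$ is principal nilpotent, so $e\in\fb\cap\Ad(k)\cdot\fb'$. A principal nilpotent element lies in a unique Borel subalgebra (Proposition~3.2.14 of \cite{CG}), so $\fb=\Ad(k)\cdot\fb'$ and hence $Q_\fb=Q_{\fb'}$. This replaces the global set-theoretic comparison of saturations with a pointwise comparison at a well-chosen element whose isotropy inside $\B$ is trivial, and it is exactly what makes the necessity direction work without any explicit root-space computation. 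Without some argument of this kind, the necessity in part~(1) --- and therefore the conclusion of part~(2), which relies on it --- is not established.
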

\begin{proof}
The sufficiency of (1) is clear.  For the necessity, suppose that $\Ad(K)\cdot\fn=\Ad(K)\cdot\fn^{\prime}$, and let $e\in\fn$ be principal nilpotent.  Then $e=\Ad(k)\cdot e^{\prime}$ for some $k\in K$, and $e^{\prime}\in\fn^{\prime}$ principal nilpotent.  Thus, $e\in \fb\cap\Ad(k)\cdot \fb^{\prime}$. 
But since $e$ is principal nilpotent, it is contained in a unique Borel subalgebra (Proposition 3.2.14 of \cite{CG}), forcing $\fb=\Ad(k)\cdot \fb^{\prime}.$  
Thus, $Q_{\fb}=Q_{\fb^{\prime}}$.  

To prove (2), suppose that $[\fn]=[\fn^{\prime}]\in\mathfrak{S}^{-\theta}$.  Then by Lemma \ref{l:cpts}, we have 
$\Ad(K)\cdot(x+\fn)=\Ad(K)\cdot(x+\fn^{\prime})$ for any $x\in (\fh\cap\fk)_{reg}$.  In particular, $\Ad(K)\cdot(\lambda x+\fn)=\Ad(K)\cdot(\lambda x+\fn^{\prime})$ 
for any $\lambda\in\C^{\times}$.  Taking the limit as $\lambda\to 0$, we obtain $\Ad(K)\cdot \fn=\Ad(K)\cdot \fn^{\prime}$, and the assertion in (2) now follows from (1).
\end{proof}

The following theorem determines the nilfibre.

\begin{thm}\label{thm:partialnil}
Let $\Phi_{n}:\fg\to \C^{r_{n-1}}\times \C^{r_{n}}$ be the 
partial KW map (Equation (\ref{eq:partial})).  

%Let $K\cdot\fb\subset\fg$ be the $K$-saturation of $\fb$, and let $K\cdot \fn\subset \fg$ be the 
%$K$-saturation of the nilradical $\fn$ of $\fb$.   
%The irreducible component decomposition of the variety 
%$\Phi_{n}^{-1}(0)$ is given by: 
%\begin{equation}\label{eq:partialnil}
%\Phi_{n}^{-1}(0)=\displaystyle\bigcup_{Q\mbox{ closed}} K\cdot \fn,
%\end{equation} 
%where the union is taken over all closed $K$-orbits $Q$ in $\B$. In particular:

\noindent If $\fg=\fso(2l)$,  then $\Phi_{n}^{-1}(0)=\Ad(K)\cdot\fn_{+}$ is
irreducible, where $\fn_{+}=[\fb_{+},\fb_{+}]$.

\noindent If $\fg=\fso(2l+1)$,  then $\Phi_{n}^{-1}(0)=\Ad(K)\cdot\fn_{+} \cup \Ad(K)\cdot \fn_{-}$ has two irreducible components,
where $\fn_{\pm}=[\fb_{\pm},\fb_{\pm}]$.
\end{thm}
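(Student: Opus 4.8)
The plan is to degenerate a generic partial KW fibre $\Phi_n^{-1}(\Phi_n(x))$, for $x \in (\fh \cap \fk)_{reg}$, to the nilfibre $\Phi_n^{-1}(0)$ using the flatness of $\Phi_n$ (Proposition \ref{prop:flat} (3)), and to track the irreducible components through this degeneration. First I would recall from Equation (\ref{eq:genericdecomp}) that the generic fibre decomposes as $\Phi_n^{-1}(\Phi_n(x)) = \bigcup_{[\fn] \in \mathfrak{S}^{-\theta}} \Ad(K)\cdot(x+\fn)$, a union of $2^{r_{n-1}}$ smooth irreducible components of dimension $\dim\fg - r_n - r_{n-1}$. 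For $\lambda \in \C^\times$, scalar multiplication gives an isomorphism $\Phi_n^{-1}(\Phi_n(x)) \to \Phi_n^{-1}(\Phi_n(\lambda x))$, and since $\Ad(K)$ commutes with scaling, the component $\Ad(K)\cdot(x+\fn)$ maps to $\Ad(K)\cdot(\lambda x + \fn)$. Taking $\lambda \to 0$, the flat limit of $\Ad(K)\cdot(\lambda x + \fn)$ is contained in $\Phi_n^{-1}(0)$; since $\fn^{-\theta} \subset \N_{\fg^{-\theta}}$ and $\Ad(K)\cdot\fn = \Ad(K)\cdot\fn^{-\theta}$ scales into itself, the limit contains $\Ad(K)\cdot\fn$, which is already a closed irreducible variety of the right dimension. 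By upper semicontinuity of fibre dimension together with flatness (equidimensionality of all fibres, Proposition \ref{prop:flat} (3)), $\Phi_n^{-1}(0)$ is equidimensional of dimension $\dim\fg - r_n - r_{n-1}$, so each $\Ad(K)\cdot\fn$, being irreducible of exactly this dimension and contained in $\Phi_n^{-1}(0)$, is an irreducible component; conversely every component arises this way since $0 \in \overline{\Ad(K)\cdot(\lambda x + \fn)}$ and these sweep out the whole fibre in the limit.

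Next I would identify the distinct components among the $\Ad(K)\cdot\fn$ for $[\fn] \in \mathfrak{S}^{-\theta}$. By Lemma \ref{l:Ksaturation} (1), $\Ad(K)\cdot\fn = \Ad(K)\cdot\fn'$ if and only if $Q_{\fb} = Q_{\fb'}$, where $\fb, \fb'$ are the standard $\theta$-stable Borels with nilradicals $\fn, \fn'$. By Remark \ref{r:thetastableclosed}, the $Q_{\fb}$ with $\fb \in \B^\theta_{std}$ are precisely closed $K$-orbits on $\B$ meeting the standard Borels. By Proposition \ref{p:closedKorbs}, in type $D$ there is exactly one closed $K$-orbit, namely $Q_{\fb_+}$, so all the $\fn \in \mathcal{N}^\theta_{std}$ give the same $\Ad(K)$-orbit $\Ad(K)\cdot\fn_+$, which is therefore the unique irreducible component: $\Phi_n^{-1}(0) = \Ad(K)\cdot\fn_+$. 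In type $B$ there are exactly two closed $K$-orbits $Q_{\fb_+}$ and $Q_{\fb_-}$ with $\fb_- = \Ad(\dot s_{\alpha_l})\cdot\fb_+$; I must check that $\fb_-$ is standard $\theta$-stable (it contains $\fh$ since $s_{\alpha_l} \in W$, and it is $\theta$-stable because $\fb_+$ is and $\dot s_{\alpha_l}$ can be chosen in $K$, as $\alpha_l$ is a noncompact imaginary root with $\theta$ fixing $\fh$), hence both $\Ad(K)\cdot\fn_+$ and $\Ad(K)\cdot\fn_-$ appear, and by Lemma \ref{l:Ksaturation} (1) they are distinct because $Q_{\fb_+} \neq Q_{\fb_-}$. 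Since every standard $\theta$-stable Borel lies in one of these two $K$-orbits, these are the only two components: $\Phi_n^{-1}(0) = \Ad(K)\cdot\fn_+ \cup \Ad(K)\cdot\fn_-$.

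The main obstacle I anticipate is making the flat-limit/degeneration argument fully rigorous — specifically, verifying that the flat limit of a single component $\Ad(K)\cdot(\lambda x + \fn)$ as $\lambda \to 0$ is exactly $\Ad(K)\cdot\fn$ rather than something larger or a union, and that these limits together account for all of $\Phi_n^{-1}(0)$. The clean way is to avoid limits of individual components and instead argue purely dimension-theoretically: $\Phi_n^{-1}(0)$ is equidimensional of dimension $\dim\fg - r_n - r_{n-1}$ by flatness; each $\Ad(K)\cdot\fn$ (equivalently $\Ad(K)\cdot\fn^{-\theta}$) is closed, irreducible, contained in $\Phi_n^{-1}(0)$ by Lemma \ref{lem:nullconeirredcomponets} and Lemma \ref{l:dim} (since $\fn^{-\theta} \subset \N_{\fg^{-\theta}}$ and $\dim(\Ad(K)\cdot\fn^{-\theta}) = \dim K + \dim\fn^{-\theta} - \dim(\text{stabilizer})$, which one computes equals $\dim\fg - r_n - r_{n-1}$ using $\dim K = \dim\fg - r_n - r_{n-1}$ from Equation (\ref{eq:quotdim}) and that a generic element of $\fn^{-\theta}$ has trivial $H\cap K$-stabilizer by the component description), hence is an irreducible component; and conversely every component of $\Phi_n^{-1}(0)$ must contain a closed $K$-orbit, which by Proposition \ref{prop:flat} (2) and the Luna-slice analysis corresponds to the nullcone slice, forcing it into the list. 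I would then invoke Lemma \ref{l:Ksaturation} and Proposition \ref{p:closedKorbs} to collapse the list to one orbit (type $D$) or two orbits (type $B$), completing the proof.
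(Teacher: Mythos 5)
Your primary route---degenerating the generic fibre $\Phi_n^{-1}(\Phi_n(\lambda x))$ to $\Phi_n^{-1}(0)$ via flatness and then identifying components through Lemma~\ref{l:Ksaturation} and Proposition~\ref{p:closedKorbs}---is the paper's proof. The rigor concern you raise about the flat limit is addressed in the paper by a single observation you omit: each $\Ad(K)\cdot\fn$ for $\fn\in\mathcal{N}^{\theta}_{std}$ is \emph{closed}, since $\fn$ is $B\cap K$-stable and $K/(B\cap K)$ is complete (Lemma 39.2.1 of \cite{TY}). Once this is known, the set-theoretic equality $\Phi_n^{-1}(0)=\bigcup_{[\fn]}\Ad(K)\cdot\fn$ follows from the flatness of the family (Theorem VIII.4.1 of \cite{SGA1}) together with the containment of each component's limit in the corresponding closed set $\Ad(K)\cdot\fn$, and there is no residual gap.

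The dimension-theoretic alternative you propose in order to sidestep the limit, however, contains a genuine error. You write ``$\Ad(K)\cdot\fn$ (equivalently $\Ad(K)\cdot\fn^{-\theta}$)'' and compute $\dim(\Ad(K)\cdot\fn^{-\theta})=\dim\fg-r_n-r_{n-1}$. But Lemma~\ref{l:cpts} identifies $\Ad(K)\cdot(x+\fn^{-\theta})$ with $\Ad(K)\cdot(x+\fn)$ only for $x\in(\fh\cap\fk)_{reg}$, via $\Ad(B\cap K)\cdot x=x+\fn\cap\fk$, and this fails at $x=0$. Indeed $\Ad(K)\cdot\fn^{-\theta}\subsetneq\Ad(K)\cdot\fn$: a principal nilpotent $e\in\fn$ with $e_{\fk}$ principal nilpotent in $\fk$ cannot be $K$-conjugate to any element of $\fn^{-\theta}$, since that would force $e_{\fk}$ to be $K$-conjugate to $0$. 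The dimensions disagree as well: because $\fn^{-\theta}$ is $B\cap K$-stable, $\Ad(K)\cdot\fn^{-\theta}$ is the image of $K\times_{B\cap K}\fn^{-\theta}$, of dimension $\dim\B_{\fk}+r_{n-1}=\tfrac{1}{2}(\dim\fg-r_n)$, which is strictly smaller than $\dim\fg-r_n-r_{n-1}=\dim\fk$ for $n\geq 4$; so $\Ad(K)\cdot\fn^{-\theta}$ is not a component of $\Phi_n^{-1}(0)$ at all. Your closing converse step also does not land: the unique closed $K$-orbit in $\Phi_n^{-1}(0)$ is $\{0\}$, whose stabilizer is all of $K$, so the Luna slice theorem at that point is vacuous and identifies no components. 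The flat degeneration is therefore not a heuristic to be replaced by dimension counting; it is precisely what proves the inclusion $\Phi_n^{-1}(0)\subseteq\bigcup_{[\fn]}\Ad(K)\cdot\fn$.
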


\begin{proof}
We first prove that $\Phi_{n}^{-1}(0)=\cup_{[\fn]\in\mathfrak{S}^{-\theta}} \Ad(K)\cdot \fn$.
Let $x\in(\fh\cap\fk)_{reg}$, and $\lambda\in\C^{\times}$.  
Then by Equation (\ref{eq:genericdecomp}), 
\begin{equation}\label{eq:predeform}
\Phi_{n}^{-1}(\Phi_{n}(\lambda x))=\displaystyle\bigcup_{[\fn]\in\mathfrak{S}^{-\theta}} \Ad(K)\cdot (\lambda x+\fn).
\end{equation}
Now consider $\displaystyle\lim_{\lambda\to 0} \Phi_{n}^{-1}(\Phi_{n}(\lambda x))$.  
Since $\Phi_{n}$ is flat by Part (3) of Proposition \ref{prop:flat}, it follows from 
Theorem VIII.4.1 of \cite{SGA1} that 
$\displaystyle\lim_{\lambda\to 0} \Phi_{n}^{-1}(\Phi_{n}(\lambda x))=\Phi_{n}^{-1}\left (\displaystyle\lim_{\lambda\to 0} \Phi_{n}(\lambda x)\right ).$  
Therefore,
\begin{equation}\label{eq:limit1}
\displaystyle\lim_{\lambda\to 0} \Phi_{n}^{-1}(\Phi_{n}(\lambda x)) =\Phi_{n}^{-1}\left (\displaystyle\lim_{\lambda\to 0} \Phi_{n}(\lambda x)\right )= \Phi_{n}^{-1}(\Phi_{n}(0))=\Phi_{n}^{-1}(0).
\end{equation} 
On the other hand, 
\begin{equation}\label{eq:limit2}
\displaystyle\lim_{\lambda\to 0} \displaystyle\bigcup_{[\fn]\in\mathfrak{S}^{-\theta}} \Ad(K)\cdot (\lambda x+\fn)= \displaystyle \bigcup_{[\fn]\in\mathfrak{S}^{-\theta}} \Ad(K)\cdot (\displaystyle\lim_{\lambda\to 0} \lambda x+\fn)= \bigcup_{[\fn]\in\mathfrak{S}^{-\theta}} \Ad(K)\cdot \fn. 
\end{equation}
 By (\ref{eq:limit1}) and (\ref{eq:limit2}), we obtain
\begin{equation}\label{eq:gotit}
\Phi_{n}^{-1}(0)=\bigcup_{[\fn]\in\mathfrak{S}^{-\theta}} \Ad(K)\cdot \fn,
\end{equation}
%where the union is over nilradicals of Borel subalgebras in closed $K$-orbits in $\B$.
%YOU MAY NEED TO SAY SOMETHING HERE ABOUT HOW A THETA-STABLE NILRADICAL COMES FROM THETA STABLE BOREL..
Since each $\fn\in\mathcal{N}_{std}^{\theta}$ is stabilized by a Borel subgroup
$B\cap K$ of $K$, the set $\Ad(K)\cdot \fn$ is closed
by Lemma 39.2.1 of \cite{TY}.  Since 
each $\Ad(K)\cdot\fn$ is evidently irreducible, the distinct $\Ad(K)\cdot\fn$ form the irreducible components of
$\Phi_{n}^{-1}(0)$.  The theorem now follows from Remark \ref{r:thetastableclosed}, Lemma \ref{l:Ksaturation}, and the classification of the closed $K$-orbits on $\B$ given in Proposition \ref{p:closedKorbs}.  
\end{proof}

\begin{rem}\label{r:glncase}
For $\fg=\fgl(n)$ an analogous description of the nilfibre is proven in Proposition 3.10 of \cite{CEeigen}.  The reasoning used here gives a simpler
determination of the nilfibre than in \cite{CEeigen}.  
\end{rem}

\begin{rem}
In upcoming work, we use the Luna slice theorem and Theorem \ref{thm:partialnil} to describe arbitrary fibres of the 
partial KW map in terms of closed $K$-orbits on $\B$ and closed $K$-orbits on certain partial flag varieties.  We determine the $x\in \fg$ such that $\Phi^{-1}(\Phi(x))_{sreg}\neq \emptyset$ and develop geometric descriptions of $\Phi^{-1}(\Phi(x))_{sreg}$ 
using the theory of $K$-orbits on $\B$.  This is in the same spirit
as the main results of \cite{CEKorbs}.
\end{rem}

\subsection{The orthogonal KW nilfibre} 
We now use Theorem \ref{thm:partialnil} to show that $\Phi^{-1}(0)_{sreg}$
is empty.

\begin{prop}\label{p:nosreg}
Let $\fg=\fso(n)$, with $n>3$, and let $\Phi: \fg\to \C^{r_{2}}\times \dots\times \C^{r_{n}}$ be the KW map. 
Then $\Phi^{-1}(0)_{sreg}=\emptyset.$ 
\end{prop}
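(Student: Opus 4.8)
The plan is to argue by contradiction, using the structure of the partial nilfibre $\Phi_n^{-1}(0)$ from Theorem \ref{thm:partialnil} together with an interlacing argument between $\fg$ and $\fk = \fg_{n-1}$. Suppose $x \in \Phi^{-1}(0)_{sreg}$. Then in particular $x \in \Phi_n^{-1}(0)$, so by Theorem \ref{thm:partialnil} we have $x \in \Ad(K)\cdot \fn$ for some $\fn = [\fb,\fb]$ with $\fb \in \B_{std}^{\theta}$. Replacing $x$ by a $K$-conjugate (which changes neither membership in $\fg_{sreg}$, since the GZ functions $f_{i,j}$ for $i \le n-1$ are $\Ad(K)$-invariant and $f_{n,j}$ are $\Ad(G)$-invariant, nor the values $\Phi(x) = 0$), we may assume $x \in \fn$. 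Since $x$ is strongly regular, $x = x_n$ is regular in $\fg$, so $x$ is a regular nilpotent element contained in the Borel $\fb$; hence $\fb$ is the unique Borel containing $x$ and $x$ is principal nilpotent in $\fb$.

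Next I would descend one step. Since $\fb \in \B^{\theta}_{std}$, the discussion preceding Notation \ref{nota:thetastable} shows $\fb \cap \fk$ is a Borel subalgebra of $\fk$ with nilradical $\fn \cap \fk$, and $x_{\fk} = \pi_{\fk}(x) \in \fn \cap \fk$ since $\fn = (\fn \cap \fk) \oplus \fn^{-\theta}$ and the $\fp$-component of $x$ lies in $\fn^{-\theta}$. Because $x$ is strongly regular, $x_{\fk} = x_{n-1}$ is regular in $\fk$, hence is a regular nilpotent element of $\fk$ lying in the Borel $\fb \cap \fk$. Now comes the key point: I want to compare $\fz_{\fk}(x_{\fk})$ with $\fz_{\fg}(x)$. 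Since $x$ is regular nilpotent in $\fg$, its centralizer $\fz_{\fg}(x)$ is an $r_n$-dimensional abelian subalgebra contained in $\fn$, and since $x_{\fk}$ is regular nilpotent in $\fk$, $\fz_{\fk}(x_{\fk})$ is an $r_{n-1}$-dimensional abelian subalgebra contained in $\fn \cap \fk$. The strong regularity of $x$ requires $\fz_{\fk}(x_{\fk}) \cap \fz_{\fg}(x) = 0$ by Proposition \ref{prop:fullsreg} (the $i = n-1$ condition), i.e. $x$ should be $n$-strongly regular. I expect this to fail: the main obstacle, and the heart of the argument, is to show that for $x$ principal nilpotent in a $\theta$-stable Borel with $x_{\fk}$ principal nilpotent in $\fb \cap \fk$, one necessarily has $\fz_{\fk}(x_{\fk}) \cap \fz_{\fg}(x) \neq 0$.

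To establish this, I would use a dimension/interlacing count together with facts about principal nilpotents. One approach: the centralizer $\fz_{\fg}(x)$ of a principal nilpotent in $\fg$ has a standard description (it is spanned by "highest weight" vectors relative to an $\fsl_2$-triple through $x$, with the exponents of $\fg$ as its $\ft$-graded degrees); similarly for $\fz_{\fk}(x_{\fk})$. The relevant comparison is: does the top-degree piece of $\fz_{\fg}(x)$ — or more precisely some nonzero element of $\fz_{\fg}(x)$ — lie in $\fk$ and commute with $x_{\fk}$? Concretely, for the specific $x$ produced in the proof of Lemma \ref{l:bothreg} (where $e \in \fk$ is simultaneously regular nilpotent in $\fg$ and in $\fk$), one has $x = x_{\fk}$ and $\fz_{\fk}(x_{\fk}) = \fz_{\fk}(x) \subset \fz_{\fg}(x)$ nontrivially, so that case is immediate; the general case in $\Phi_n^{-1}(0)$ reduces via Theorem \ref{thm:partialnil} and Lemma \ref{l:cpts} to $x \in \fn$ for $\fn \in \mathcal{N}^{\theta}_{std}$, and one checks using the explicit root-space descriptions in Example \ref{ex:roottypes} and the decomposition $\fg_{\alpha} \oplus \fg_{\theta(\alpha)} = ((\cdots)\cap\fk)\oplus((\cdots)\cap\fg^{-\theta})$ that a regular nilpotent $x$ in such a $\fb$ always has $\pi_{\fk}(x)$ regular nilpotent in $\fb\cap\fk$ with $\fz_{\fk}(\pi_\fk(x))$ meeting $\fz_{\fg}(x)$. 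I would carry this out by exhibiting an explicit nonzero element of the intersection: take the highest root vector direction $e_{-\mu}$ (for $\mu$ the highest root of $\fg$) which spans the top of $\fz_{\fg}(x)$, observe it is fixed by $\theta$ (the highest root is imaginary compact in type $B$, and in type $D$ one uses the corresponding $\theta$-fixed combination), hence lies in $\fk$, and verify it centralizes $x_{\fk}$. This contradicts Proposition \ref{prop:fullsreg}, so $\Phi^{-1}(0)_{sreg} = \emptyset$. The hardest part is making the claim "$\fz_{\fk}(x_{\fk}) \cap \fz_{\fg}(x) \neq 0$ for regular nilpotent $x$ in a $\theta$-stable Borel" both precise and uniform across types $B$ and $D$; the interlacing inequality $r_{n-1} + r_n > \dim(\fn \cap \fk) \cdot 0 + \cdots$ alone is not quite enough, so an explicit root-vector witness as above is the cleanest route.
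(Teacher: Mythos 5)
Your proposal follows essentially the same route as the paper: reduce to the partial nilfibre $\Phi_n^{-1}(0)$, invoke Theorem \ref{thm:partialnil} to place $x$ (up to $K$-conjugacy) in the nilradical $\fn$ of a $\theta$-stable Borel, and exhibit the highest root vector of $\fb$ (compact imaginary for $n>4$, with a modification for $\fso(4)$) as a nonzero element of $\fz_{\fk}(x_{\fk}) \cap \fz_{\fg}(x)$, contradicting Proposition \ref{prop:fullsreg}. This is exactly the content of the paper's Proposition \ref{prop:overlaps} and Corollary \ref{c:nonsreg}, applied at level $n$.

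One step in your write-up is incorrectly justified, though the conclusion you want survives. You assert that replacing $x$ by an $\Ad(K)$-conjugate preserves membership in $\fg_{sreg}$ and the value $\Phi(x)=0$ because the $f_{i,j}$ for $i\le n-1$ are $\Ad(K)$-invariant. That is false: the projection $\pi_i$ is $G_i$-equivariant but not $G_{n-1}$-equivariant for $i<n-1$, so $f_{i,j}$ for $i<n-1$ are not $K$-invariant, and $K$-conjugation preserves neither $\fg_{sreg}$ nor $\Phi^{-1}(0)$ in general. The repair is to observe that you only need two $K$-invariant consequences of your hypotheses: (i) $x\in\Phi_n^{-1}(0)$ (the partial nilfibre is $K$-stable, since $\Phi_n$ is the $K$-quotient morphism by Proposition \ref{prop:flat}), and (ii) the $i=n-1$ condition of Proposition \ref{prop:fullsreg}, namely $n$-strong regularity, i.e. $\fz_{\fk}(x_{\fk})\cap\fz_{\fg}(x)=0$, which is manifestly preserved by $K$-conjugation. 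With the reduction restated this way, your highest-root witness gives the contradiction, and the rest of the argument is sound. (Two minor slips: the witness should be $e_{\mu}$, not $e_{-\mu}$, where $\mu$ is the highest root of the positive system determined by $\fb$; and in type $D$ the highest root is compact imaginary as soon as $l>2$ — the $\theta$-fixed combination is only needed for $\fso(4)$.)
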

To show Proposition \ref{p:nosreg}, we first observe that the nilfibre of the partial KW map $\Phi_{n}^{-1}(0)$ has 
no $n$-strongly regular elements.  The key observation is the following 
proposition,  which can be viewed as an extension of Proposition 3.8 in \cite{CEKorbs}. 

\begin{prop}\label{prop:overlaps}  
Let $n>3$,
let $\fg=\fso(n)$, and let $K=SO(n-1)$.  Let $Q\subset \B$ be a closed $K$-orbit, and let $\fb\in Q$, with nilradical $\fn$.  
Then
%, and $\fz_{\fg}(\fn)$ be the centralizer of 
%$\fn$ is $\fg$ and similarly let $\fz_{\fk}(\fn\cap\fk)$ be the centralizer of %$\fn\cap\fk$ in $\fk$.
\begin{equation}\label{eq:fncentralizers}
\fz_{\fk}(\fn\cap\fk)\cap\fz_{\fg}(\fn)\neq 0.
\end{equation}

\end{prop}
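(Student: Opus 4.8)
The plan is to prove Proposition \ref{prop:overlaps} by exhibiting an explicit nonzero element in the intersection $\fz_{\fk}(\fn\cap\fk)\cap\fz_{\fg}(\fn)$. Since $Q$ is a closed $K$-orbit, Remark \ref{r:thetastableclosed} tells us $\fb\in\B^{\theta}$, so $\fn\in\mathcal{N}^{\theta}$, and by the discussion preceding Notation \ref{nota:thetastable}, $\fn\cap\fk$ is the nilradical of the Borel subalgebra $\fb\cap\fk$ of $\fk$. Conjugating by $K$, I may assume $\fb$ is standard, i.e. $\fb\in\B^{\theta}_{std}$; by the classification in Proposition \ref{p:closedKorbs}, in type $D$ the only closed $K$-orbit is $Q_{\fb_+}$, while in type $B$ there are two, $Q_{\fb_+}$ and $Q_{\fb_-}$ (with $\fb_-=\Ad(\dot{s}_{\alpha_l})\cdot\fb_+$). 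So it suffices to handle these finitely many standard representatives explicitly.

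The key point is that for $\fb$ standard, $\fz_{\fg}(\fn)$ is the one-dimensional span of a principal nilpotent $e\in\fn$ (the centralizer of the nilradical of a Borel is the highest root space... more precisely, for the principal nilpotent $e=\sum_{\alpha\in\Pi}e_\alpha$ one has $\fz_\fg(\fn)\subset\fz_\fg(e)$, and one checks $\fz_\fg(\fn)=\C e'$ for a suitable $e'\in\fn$ — in fact the lowest-degree piece). So the claim reduces to showing that a generator of $\fz_{\fg}(\fn)$ also lies in $\fz_{\fk}(\fn\cap\fk)$. For this I would use the analogue computation from Lemma \ref{l:bothreg}: in the proof there, one produces a regular nilpotent $e\in\fn_+$ with $\pi_{\fk}(e)$ regular nilpotent in $\fk$, via Kostant's criterion (Theorem 5.3 of \cite{Kostanttds}), with an explicit root-vector description adapted to $\B^\theta$. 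I would take $e$ to be the principal nilpotent spanning $\fz_\fg(\fn)$ and show using the explicit root data in Example \ref{ex:roottypes} and the structure of $\fg^{-\theta}$ computed in Section \ref{ss:partialKWnilfibre} that $e$, or rather its nonzero component spanning $\fz_\fg(\fn)$, already satisfies $[e,\fn\cap\fk]=0$; since this component also lies in $\fn\cap\fk$ (it is $\theta$-fixed for the standard Borels considered), it exhibits a nonzero element of $\fz_\fk(\fn\cap\fk)\cap\fz_\fg(\fn)$. One does the type $B$ and type $D$ cases separately exactly as in Lemma \ref{l:bothreg}, and in type $B$ also checks the second orbit $Q_{\fb_-}$ by applying $\Ad(\dot{s}_{\alpha_l})$.

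The main obstacle I anticipate is pinning down $\fz_{\fg}(\fn)$ precisely and verifying that its generator is $\theta$-fixed and centralizes $\fn\cap\fk$: a priori the generator of $\fz_\fg(\fn)$ could have a nonzero component in $\fg^{-\theta}$, in which case the argument needs the extra observation that $\fz_{\fg}(\fn)\cap\fk\neq 0$, not merely $\fz_\fg(\fn)\neq 0$. Concretely, $\fz_\fg(\fn)$ is spanned by a root vector (for the highest root, up to the subtlety of type $D$ where the "highest" piece of $\fn$ can be two-dimensional and the principal nilpotent's centralizer sits inside it); I would verify case-by-case using the skew-diagonal realization of Section \ref{ss:orthoreal} that the relevant root $\alpha$ with $\fg_\alpha=\fz_\fg(\fn)$ is compact imaginary (so $\fg_\alpha\subset\fk$) for each standard closed-orbit representative — for $\fb_+$ in type $D$ the highest root is $\eps_1+\eps_2$, which is compact imaginary by Example \ref{ex:roottypes}, and similarly in type $B$. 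Once that is checked, $0\neq\fg_\alpha\subset\fz_\fk(\fn\cap\fk)\cap\fz_\fg(\fn)$ and the proof is complete. A clean way to organize the verification is: first show $\fz_\fg(\fn)=\fg_{\tilde\alpha}$ for the highest root $\tilde\alpha$ (standard), then observe $\fg_{\tilde\alpha}\subset\fn\cap\fk$ and $[\fg_{\tilde\alpha},\fn]=0$ forces $[\fg_{\tilde\alpha},\fn\cap\fk]=0$, giving $\fg_{\tilde\alpha}\subset\fz_\fk(\fn\cap\fk)$ as needed.
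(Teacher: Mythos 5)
Your "clean way to organize the verification" at the end is essentially the paper's proof: reduce to a standard representative via Proposition \ref{p:closedKorbs}, show $\fz_\fg(\fn)=\fg_{\tilde\alpha}$ for the highest root $\tilde\alpha$, and check $\tilde\alpha$ is compact imaginary so that $\fg_{\tilde\alpha}\subset\fn\cap\fk$, which forces $\fg_{\tilde\alpha}\subset\fz_\fk(\fn\cap\fk)\cap\fz_\fg(\fn)$. The earlier detour through "principal nilpotent $e$ spanning $\fz_\fg(\fn)$" is incorrect as stated — $\fz_\fg(\fn)$ is the highest root space, not the line through a principal nilpotent, and the reference to Lemma \ref{l:bothreg} and Kostant's criterion is not the right tool — but you abandon that line and land on the correct argument.

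There is, however, a genuine gap: the case $n=4$ is in the scope of the proposition ($n>3$) but is not covered by your argument, and in fact your argument gives a false intermediate claim there. You write that for $\fb_+$ in type $D$ the highest root $\eps_1+\eps_2$ is compact imaginary by Example \ref{ex:roottypes}, but that example shows the roots $\pm(\eps_i\pm\eps_l)$ with $i<l$ are \emph{complex} $\theta$-stable; $\eps_1+\eps_2$ is compact imaginary only when $l>2$, i.e.\ $n>4$. For $\fso(4)=D_2$ the highest root is complex $\theta$-stable, so $\fg_{\eps_1+\eps_2}\not\subset\fk$ and the root space itself does not furnish the desired element. Moreover $D_2$ is not simple, so the identification $\fz_\fg(\fn)=\fg_{\tilde\alpha}$ is also off (here $\fn$ is abelian and two-dimensional, hence $\fz_\fg(\fn)\supset\fn$). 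You flag a "subtlety of type $D$ where the highest piece of $\fn$ can be two-dimensional" but do not resolve it. The fix is precisely what the paper does: for $n=4$ observe that $\fn$ is abelian and $\theta$-stable, so $(\fg_{\tilde\alpha}\oplus\fg_{\theta(\tilde\alpha)})^\theta$ is a nonzero subspace of $\fn\cap\fk$ contained in $\fz_\fg(\fn)=\fn$, and hence lies in $\fz_\fk(\fn\cap\fk)\cap\fz_\fg(\fn)$. Without this case your proof is incomplete.
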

\begin{proof}
By $K$-equivariance, it suffices to show (\ref{eq:fncentralizers}) for a representative $\fb$ of the closed $K$-orbit $Q$. By Proposition \ref{p:closedKorbs}, we can assume that the standard diagonal Cartan subalgebra $\fh$ is in $\fb$.  Let $\phi \in \Phi^{+}(\fg, \fh)$ be the highest root of $\fb$.  We claim for $n>4$ that $\phi$ is compact imaginary.  It then follows that the root space
$$
\fg_{\phi}\subset \fz_{\fk}(\fn\cap\fk)\cap\fz_{\fg}(\fn),
$$  
and the result follows.
For the claim, suppose first that $\fg=\fso(2l)$. By Part (2) of Proposition \ref{p:closedKorbs}, we can 
assume that $\fb=\fb_{+}$.  The highest root is then $\epsilon_{1}+\epsilon_{2}$, which is compact imaginary 
for $l>2$ (Example \ref{ex:roottypes}).  If $\fg=\fso(2l+1)$, then by Part (1) of Proposition \ref{p:closedKorbs}, we can assume 
that $\fb=\fb_{+}$ or $\fb=\fb_{-}=\Ad(\dot{s}_{\alpha_{l}})\cdot \fb_{+}$.  For both $\fb_{+}$
and $\fb_{-}$, the highest root is $\epsilon_{1}+\epsilon_{2}$, 
which is compact imaginary (Example \ref{ex:roottypes}).  

If $\fg=\fso(4)$, then $\phi=\epsilon_{1}+\epsilon_{2}$ is 
complex $\theta$-stable.  Since $\fn$ is abelian in this case, and $\fn$ is $\theta$-stable by Remark \ref{r:thetastableclosed},
 $(\fg_{\phi} \oplus \fg_{\theta(\phi)})^{\theta}\subset \fz_{\fk}(\fn\cap\fk)\cap \fz_{\fg}(\fn)$.

\end{proof}

\begin{cor}\label{c:nonsreg}
Let $n>3$, and let $\Phi_{n}:\fg\to \C^{r_{n-1}}\oplus \C^{r_{n}}$ be the partial Kostant-Wallach map.    
Then $\Phi_{n}^{-1}(0)$ contains no $n$-strongly regular elements. 
\end{cor}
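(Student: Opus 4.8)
The plan is to combine the structural description of the partial nilfibre from Theorem \ref{thm:partialnil} with the centralizer non-vanishing statement of Proposition \ref{prop:overlaps}. Recall that an element $x \in \fg$ is $n$-strongly regular precisely when $\fz_{\fk}(x) = \fz_{\fk}(x_{\fk}) \cap \fz_{\fg}(x) = 0$ (Definition \ref{dfn:nsreg} together with the observation preceding it). So it suffices to show that every element of $\Phi_{n}^{-1}(0)$ has a nonzero $\fk$-centralizer.

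By Theorem \ref{thm:partialnil}, the nilfibre $\Phi_{n}^{-1}(0)$ is the union $\bigcup \Ad(K) \cdot \fn$, where $\fn$ runs over the (one or two) $\theta$-stable standard nilradicals arising from closed $K$-orbits on $\B$; more precisely $\Phi_n^{-1}(0) = \Ad(K)\cdot\fn_+$ in type $D$ and $\Ad(K)\cdot\fn_+ \cup \Ad(K)\cdot\fn_-$ in type $B$. Hence any $y \in \Phi_{n}^{-1}(0)$ has the form $y = \Ad(k)\cdot e$ for some $k \in K$ and some $e \in \fn$, where $\fn = [\fb,\fb]$ with $\fb \in \B^{\theta}_{std}$ belonging to a closed $K$-orbit $Q$. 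First I would fix such a $y$, write $y = \Ad(k)\cdot e$, and note that $e \in \fn$ forces $e_{\fk} = \pi_{\fk}(e) \in \fn \cap \fk$ (since $\fn$ is $\theta$-stable, projection off $\fp$ maps $\fn$ into $\fn \cap \fk$). Therefore $\fz_{\fk}(e_{\fk}) \supseteq \fz_{\fk}(\fn\cap\fk)$ and $\fz_{\fg}(e) \supseteq \fz_{\fg}(\fn)$, so that
$$
\fz_{\fk}(e_{\fk}) \cap \fz_{\fg}(e) \ \supseteq\ \fz_{\fk}(\fn\cap\fk) \cap \fz_{\fg}(\fn),
$$
and the right-hand side is nonzero by Proposition \ref{prop:overlaps} (using $n > 3$). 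Thus $\fz_{\fk}(e) \neq 0$, i.e.\ $e$ is not $n$-strongly regular. Since $\fg_{nsreg}$ is $K$-stable (the $\fk$-centralizer condition is visibly $\Ad(K)$-invariant, as $\fz_{\fk}(\Ad(k)\cdot x) = \Ad(k)\cdot\fz_{\fk}(x)$ for $k \in K$), it follows that $y = \Ad(k)\cdot e$ is not $n$-strongly regular either. As $y$ was arbitrary, $\Phi_{n}^{-1}(0) \cap \fg_{nsreg} = \emptyset$.

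The only mild subtlety — and the step I would be most careful about — is the passage from an arbitrary element $e$ of the nilradical $\fn$ to the full nilradical in the centralizer inclusion: I should make sure the inclusions $\fz_{\fk}(\fn \cap \fk) \subseteq \fz_{\fk}(e_{\fk})$ and $\fz_{\fg}(\fn) \subseteq \fz_{\fg}(e)$ hold for every $e \in \fn$, which is immediate since $e \in \fn$ and $e_{\fk}\in\fn\cap\fk$, so anything centralizing all of $\fn$ (resp.\ $\fn\cap\fk$) centralizes $e$ (resp.\ $e_{\fk}$). Everything else is bookkeeping: the $\theta$-stability of $\fn$ ensures $e_{\fk} \in \fn \cap \fk$, Theorem \ref{thm:partialnil} reduces to the standard representatives, and Proposition \ref{prop:overlaps} supplies the nonzero common centralizer. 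No further computation is needed.
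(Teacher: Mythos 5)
Your proposal is correct and takes essentially the same approach as the paper: Theorem \ref{thm:partialnil} reduces to nilradicals of Borels in closed $K$-orbits, Remark \ref{r:thetastableclosed} gives the $\theta$-stability needed to conclude $e_{\fk}\in\fn\cap\fk$, and Proposition \ref{prop:overlaps} supplies the nonzero common centralizer. The paper's only (cosmetic) difference is that it applies Proposition \ref{prop:overlaps} directly to the nilradical of the (already $K$-conjugated) Borel containing $x$, rather than first reducing to a standard representative and then invoking $K$-equivariance of $\fg_{nsreg}$ as you do.
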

\begin{proof}
Suppose $x \in \Phi_{n}^{-1}(0)$, so by Theorem \ref{thm:partialnil},
$x$ is contained in $\fn$, the nilradical of a Borel subalgebra
$\fb$ with $Q_{\fb}\subset\B$ closed.  
By Proposition \ref{prop:overlaps}, there is
a nonzero element $y$ of $\fz_{\fk}(\fn\cap \fk) \cap \fz_{\fg}(\fn)$.  By Remark \ref{r:thetastableclosed}, $x_{\fk}\in\fn\cap\fk$, so that $y\in\fz_{\fk}(x_{\fk})\cap \fz_{\fg}(x) $, and therefore $x$ is
not $n$-strongly regular.
\end{proof}

\begin{rem}\label{r:lowdim}
The assertion of the corollary is false
 for $n=3$.  In this case,  $\fso(3)\cong \fsl(2)$ and $\fk=\fh\subset \fsl(2)$, 
where $\fh$ is the standard Cartan subalgebra of $\fsl(2)$.  Further, the KW map for $\fso(3)\cong\fsl(2)$ coincides with 
the partial KW map for $\fso(3)\cong \fsl(2)$.  
In this case, it follows by Proposition 3.11 from \cite{CEeigen}
that each irreducible component of $\Phi_{n}^{-1}(0)$ contains strongly regular elements.
\end{rem} 

\begin{proof}[Proof of Proposition \ref{p:nosreg}]
It follows from Proposition \ref{prop:fullsreg} that $x\in\fg_{sreg}$ if and only if 
$x_{i}\in(\fg_{i})_{i-sreg}$ for all $i$.  Thus, if $x\in \Phi^{-1}(0)_{sreg}$, then $x_{i}\in \Phi_{i}^{-1}(0)_{i-sreg}$, for all $i=3,\dots, n$
where $\Phi_{i}:\fg_{i}\to\C^{r_{i-1}}\times \C^{r_{i}}$ is the partial KW map for $\fg_{i}$.  But Corollary \ref{c:nonsreg} 
implies that $\Phi_{i}^{-1}(0)_{i-sreg}=\emptyset$ for all $i=4,\dots, n$, and therefore $\Phi^{-1}(0)_{sreg}=\emptyset$. 
\end{proof} 

Let $I_{GZ}$ be the ideal of $\C[\fg]$ generated by the GZ functions 
$J_{GZ}$ in Equation (\ref{eq:GZfuns}).
We can use Proposition \ref{p:nosreg} to determine when the  
ideal $I_{GZ}$ is radical.  

\begin{cor}\label{c:notrad}
Let $\fg=\fso(n)$.  Then the ideal $I_{GZ}$ is radical if and only if $n=3$.  
\end{cor}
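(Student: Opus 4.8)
Since $I_{GZ}$ is radical exactly when the scheme $\Phi^{-1}(0) = \operatorname{Spec}(\C[\fg]/I_{GZ})$ is reduced, the plan is to play this reducedness against Proposition \ref{p:nosreg}. Write $N = |J_{GZ}| = \sum_{i=2}^{n} r_i$, so that $\dim\fg - N = \sum_{i=2}^{n-1} r_i$ by $(\ref{eq:sumri})$. The key reduction is the following: \emph{if} $\dim\Phi^{-1}(0) = \dim\fg - N$, i.e. $\Phi^{-1}(0)$ is a set-theoretic complete intersection, then the $N$ generators of $I_{GZ}$ form a regular sequence in the Cohen--Macaulay ring $\C[\fg]$, so $\C[\fg]/I_{GZ}$ is Cohen--Macaulay and has no embedded primes; by Serre's criterion $(R_0)+(S_1)$ it is then reduced if and only if it is generically reduced, and, since in characteristic $0$ a reduced variety is generically smooth, this holds if and only if the Jacobian of $J_{GZ}$ attains maximal rank $N$ at a general point of each irreducible component — equivalently, by $(\ref{eq:regdiffs1})$ and Definition-Notation \ref{dfnote:sreg}, if and only if $\Phi^{-1}(0)_{sreg}$ meets every component of $\Phi^{-1}(0)$.

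\textbf{The case $n = 3$.} Here $\fg = \fso(3)\cong\fsl(2)$, $\Phi = \Phi_3$, and $N = 2$. By Remark \ref{r:lowdim}, each irreducible component of $\Phi^{-1}(0)$ contains a strongly regular element, at which the two differentials in $J_{GZ}$ are independent; by the Jacobian criterion (Theorem 18.15(a) of \cite{Eis}) the scheme $\Phi^{-1}(0)$ is smooth there of dimension $\dim\fg - N = 1$. Since $\fg_{sreg}$ is open, $\Phi^{-1}(0)_{sreg}$ is dense in every component, so $\Phi^{-1}(0)$ is everywhere of dimension $1$ and is generically reduced; by the reduction above it is reduced, and $I_{GZ}$ is radical.

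\textbf{The case $n > 3$.} Suppose for contradiction that $I_{GZ}$ is radical, so $\Phi^{-1}(0)$ is a variety, and suppose (the claim to be proved below) that some irreducible component $Z$ has codimension exactly $N$. Choose $x$ in the dense open locus of $Z$ consisting of smooth points of $Z$ lying on no other component; then $\mathcal O_{\Phi^{-1}(0),x} = \mathcal O_{Z,x}$ is a regular local ring of dimension $\dim\fg - N$, so $(I_{GZ})_x$ is generated by part of a regular system of parameters, of length $N$. As $(I_{GZ})_x$ is globally generated by the $N$ functions in $J_{GZ}$, which vanish at $x\in\Phi^{-1}(0)$, Nakayama's lemma forces their images in $\mathfrak m_x/\mathfrak m_x^2 = T_x^*\fg$, namely the differentials $\{df(x) : f\in J_{GZ}\}$, to be linearly independent. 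Thus $x\in\fg_{sreg}\cap\Phi^{-1}(0)=\Phi^{-1}(0)_{sreg}$, contradicting Proposition \ref{p:nosreg}. Hence $I_{GZ}$ is not radical, provided the existence of $Z$ is established.

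\textbf{The dimension claim and the main obstacle.} By Krull's height theorem every component of $\Phi^{-1}(0)$ has codimension $\le N$, so it suffices to prove $\dim\Phi^{-1}(0) = \dim\fg - N$, which I would do by induction on $n$, the base $n = 3$ being the computation above. Since $\fg_i\subset\fk$ and $\pi_i\colon\fg\to\fg_i$ factors through $\pi_\fk$, each GZ function with $i\le n-2$ has the form $\pi_\fk^*(f^{\fk}_{i,j})$; together with Remark \ref{r:spectrum} this identifies the nilfibre of $\fg$ with $\{x\in\Phi_n^{-1}(0) : \Phi_\fk(x_\fk)=0\}$, and moreover $\pi_\fk$ carries $\Phi_n^{-1}(0)$ (which is the $K$-nullcone in $\fg$) onto the nilpotent cone $\mathcal N_\fk$ of $\fk$, which it contains as a section, identifying the nilfibre of $\fg$ with $\pi_\fk^{-1}(\Phi_\fk^{-1}(0))$. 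Using the description $\Phi_n^{-1}(0)=\Ad(K)\cdot\fn_{+}$ (and $\Ad(K)\cdot\fn_{-}$ in type $B$) of Theorem \ref{thm:partialnil} and the equidimensionality from Proposition \ref{prop:flat}(3), I would show that $\pi_\fk\colon\Phi_n^{-1}(0)\to\mathcal N_\fk$ has all fibres of dimension $r_{n-1}=\dim\fn^{-\theta}$ (Lemma \ref{l:dim}); then $\dim\Phi^{-1}(0)=\dim\Phi_\fk^{-1}(0)+r_{n-1}$, and $(\ref{eq:sumri})$ and $(\ref{eq:quotdim})$ close the induction. The genuine difficulty is the \emph{upper} bound on these fibre dimensions — the lower bound $\ge r_{n-1}$ being automatic from upper semicontinuity: over a nilpotent $y\in\mathcal N_\fk$ one must check that only an $r_{n-1}$-dimensional family of directions in $\fg^{-\theta}$ can be added to $y$ while keeping the sum in the $K$-nullcone of $\fg$, a Hilbert--Mumford analysis in the spirit of Section \ref{ss:partialKWnilfibre}.
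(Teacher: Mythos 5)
Your overall strategy is the same as the paper's: both rest on the Cohen--Macaulay/Serre criterion (Eisenbud's Theorem 18.15(a)) that makes radicality of $I_{GZ}$ equivalent to generic linear independence of the GZ differentials on each component of $\Phi^{-1}(0)$, and both then invoke Remark \ref{r:lowdim} for $n=3$ and Proposition \ref{p:nosreg} for $n>3$. You have in effect unpacked the criterion from first principles, and in doing so you correctly pinpoint its hidden hypothesis: one needs $\Phi^{-1}(0)$ to have codimension exactly $N=|J_{GZ}|$, for otherwise the Nakayama step does not force independence of the differentials and the unmixedness/Serre step does not apply. The paper's one-line citation also implicitly requires this codimension (Theorem 18.15 is stated for ideals of codimension equal to the number of generators in a Cohen--Macaulay ring); its final remark relegates the flatness of $\Phi$, hence $\dim\Phi^{-1}(0)$, to forthcoming work. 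So the issue you flag is real, and the question is whether your sketch closes it.

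It does not, because the key claim of your last paragraph --- that every fibre of $\pi_\fk|_{\Phi_n^{-1}(0)}\colon\Phi_n^{-1}(0)\to\mathcal N_\fk$ has dimension exactly $r_{n-1}$ --- is false. Consider the fibre over $0\in\mathcal N_\fk$, namely $\fp\cap\Phi_n^{-1}(0)$ where $\fp=\fg^{-\theta}\cong\C^{n-1}$ is the standard $K$-module. Since $\C[\fg]^K=\C[\fg]^G\otimes\C[\fk]^K$ (Proposition \ref{prop:flat}) and every $f_{n-1,j}$ already vanishes identically on $\fp$, the cut-out conditions reduce to $\psi_{n,j}|_\fp=0$; but $\C[\fp]^K$ is generated by the quadratic form $\beta$, so $\psi_{n,1}|_\fp$ is a nonzero multiple of $\beta$ and the higher $\psi_{n,j}|_\fp$ are polynomials in $\beta$. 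Hence $\fp\cap\Phi_n^{-1}(0)=\{v\in\fp:\beta(v,v)=0\}$ is a quadric hypersurface of dimension $n-2$, which strictly exceeds $r_{n-1}=\lfloor(n-1)/2\rfloor$ for all $n\geq 4$. So the fibre dimension of $\pi_\fk$ genuinely jumps, your induction as sketched does not close, and establishing $\dim\Phi^{-1}(0)=\dim\fg-N$ requires a more delicate stratified analysis of $\pi_\fk$ over $\Phi_\fk^{-1}(0)$ (or a direct proof of flatness of $\Phi$), neither of which is supplied here or, in this paper, by the authors.
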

\begin{proof}
By Theorem 18.15(a) of \cite{Eis}, the ideal $I_{GZ}$ is radical
if and only if the set of differentials $\{df(x): f \in J_{GZ} \}$ is linearly independent on an open, dense subset 
of each irreducible component of $\Phi^{-1}(0)$.  It follows from Definition-Notation \ref{dfnote:sreg} that $I_{GZ}$ is radical 
if and only if each irreducible component of $\Phi^{-1}(0)$ contains strongly regular elements.  
But it follows from Proposition \ref{p:nosreg} and the case of $\fso(3)$ in Remark \ref{r:lowdim}
 that each irreducible component of $\Phi^{-1}(0)$ contains strongly regular elements if and only if $n=3$.     
\end{proof}

Using Proposition \ref{p:nosreg}, we can see that there is no orthogonal
analogue of the Hessenberg matrices, which play an important role for
$\fgl(n)$  (Equation (\ref{eq:Hessenberg})).

\begin{cor}\label{c:noHess}
Let $\fg=\fso(n)$ with $n>3$.  There is no subvariety $\mathfrak{X}\subset\fg$ such that 
the restriction of the KW map to $\mathfrak{X}$ is an isomorphism: 
$$
\Phi:\mathfrak{X}\to \C^{r_{2}}\times\dots \times \C^{r_{n}}.
$$
\end{cor}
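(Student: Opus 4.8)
The plan is to derive a contradiction from Proposition \ref{p:nosreg}. First observe that the dimension of the target $\C^{r_{2}}\times\dots\times\C^{r_{n}}$ equals $\sum_{i=2}^{n}r_{i}$, which is exactly the number of functions in $J_{GZ}$, and that the coordinate functions of $\Phi$ are precisely the GZ functions $f_{i,j}$. Suppose, for contradiction, that a subvariety $\mathfrak{X}\subset\fg$ exists with $\Phi|_{\mathfrak{X}}\colon\mathfrak{X}\to\C^{r_{2}}\times\dots\times\C^{r_{n}}$ an isomorphism of varieties. Then $\mathfrak{X}$ is a smooth irreducible variety of dimension $\sum_{i=2}^{n}r_{i}$, and there is a unique point $x_{0}\in\mathfrak{X}$ with $\Phi(x_{0})=0$; thus $\{x_{0}\}=\mathfrak{X}\cap\Phi^{-1}(0)$.

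Next I would examine the differential of $\Phi$ at $x_{0}$. Let $\sigma\colon\C^{r_{2}}\times\dots\times\C^{r_{n}}\to\mathfrak{X}\subset\fg$ denote the inverse morphism, so that $\Phi\circ\sigma=\mathrm{id}$. Differentiating at the origin gives $d\Phi_{x_{0}}\circ d\sigma_{0}=\mathrm{id}$, so the differential $d\Phi_{x_{0}}\colon T_{x_{0}}(\fg)\to\C^{r_{2}}\times\dots\times\C^{r_{n}}$ is surjective. Here one uses that $d(\Phi|_{\mathfrak{X}})_{x_{0}}$ factors as $d\Phi_{x_{0}}$ composed with the inclusion $T_{x_{0}}(\mathfrak{X})\hookrightarrow T_{x_{0}}(\fg)$, so that surjectivity of the former forces surjectivity of the latter. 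Since the components of $\Phi$ are the $f_{i,j}\in J_{GZ}$, surjectivity of $d\Phi_{x_{0}}$ is equivalent to linear independence of $\{df_{i,j}(x_{0}):f_{i,j}\in J_{GZ}\}$ in $T_{x_{0}}^{*}(\fg)$.

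By Definition-Notation \ref{dfnote:sreg}, this says exactly that $x_{0}\in\fg_{sreg}$, hence $x_{0}\in\Phi^{-1}(0)_{sreg}$. Since $n>3$, this contradicts Proposition \ref{p:nosreg}, which asserts $\Phi^{-1}(0)_{sreg}=\emptyset$. Therefore no such $\mathfrak{X}$ exists. I do not anticipate a genuine obstacle here: the substantive input is entirely Proposition \ref{p:nosreg}, and the remainder is a formal deduction; the only routine point to verify is the compatibility of differentials noted above, together with the standard facts that an affine space is smooth and that a subvariety injects into the ambient tangent space at each of its points.
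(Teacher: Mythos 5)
Your proof is correct and follows essentially the same route as the paper: you derive a contradiction from Proposition \ref{p:nosreg} by showing that the existence of such an isomorphism would force the unique point of $\mathfrak{X}$ over $0$ to be strongly regular. The paper states this implication in a single sentence without the differential computation; your argument simply supplies the details the paper leaves implicit.
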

\begin{proof}
The existence of such a subvariety $\mathfrak{X}$ would imply 
that every fibre of the KW map contained strongly regular elements, contradicting 
Proposition \ref{p:nosreg}.
\end{proof}

\begin{rem}
In upcoming work, we show that the KW map $\Phi$ is flat in both the general linear and 
orthogonal cases, so that $\Phi^{-1}(0)$ is an equidimensional variety of dimension 
$$
\dim \Phi^{-1}(0)=\dim\fg-\displaystyle\sum_{i=2}^{n} r_{i} =\frac{\dim \fg-r_{n}}{2}=\dim\fn
$$
using Equation (\ref{eq:sumri}).  This was previously known only for $\fgl(n)$
by work of Ovsienko \cite{Ov, Futfilt}.
We use this result to study the category of Gelfand-Zeitlin modules for the enveloping algebra $U(\fg)$ of $\fg$ studied by Futorny, Ovsienko, and others  \cite{DFO, Futfibres}.
This enables us to extend known results for Gelfand-Zeitlin modules
for $\fgl(n)$ to the orthogonal case.
\end{rem}

\bibliographystyle{amsalpha.bst}

\bibliography{bibliography-1}

\end{document}